\newcommand*{\arxiv}{}%
\newcommand{\cA}{\mathcal{A}}
\newcommand{\rank}{\mathrm{rank}}
\newcommand{\im}{\mathrm{im}}
\newcommand{\onev}{\mathbf{1}}
\newcommand{\zerov}{\mathbf{0}}
\newcommand{\tO}[1]{\widetilde{O}\left(#1\right)}
\newcommand{\ox}{\bar{x}}
\newcommand{\olam}{\bar{\lambda}}
\newcommand{\eps}{\varepsilon}
\newcommand{\ow}{\bar{w}}
\newcommand{\vPi}{\boldsymbol{\mathit{\Pi}}}
\newcommand{\vSigma}{\boldsymbol{\mathit{\Sigma}}}
\newcommand{\vlambda}{\boldsymbol{\lambda}}
\newcommand{\vLambda}{\boldsymbol{\Lambda}}
\renewcommand{\aa}{\boldsymbol{\mathit{a}}}
\newcommand{\bb}{\boldsymbol{\mathit{b}}}
\newcommand{\uu}{\boldsymbol{\mathit{u}}}
\newcommand{\vv}{\boldsymbol{\mathit{v}}}
\newcommand{\ww}{\boldsymbol{\mathit{w}}}
\newcommand{\oww}{\boldsymbol{\mathit{\bar{w}}}}
\newcommand{\xx}{\boldsymbol{\mathit{x}}}
\newcommand{\oxx}{\boldsymbol{\bar{\mathit{x}}}}
\newcommand{\ovv}{\boldsymbol{\bar{\mathit{v}}}}
\newcommand{\yy}{\boldsymbol{\mathit{y}}}
\newcommand{\zz}{\boldsymbol{\mathit{z}}}
\renewcommand{\AA}{\boldsymbol{\mathit{A}}}
\newcommand{\oAA}{\boldsymbol{\overline{\mathit{A}}}}
\newcommand{\BB}{\boldsymbol{\mathit{B}}}
\newcommand{\CC}{\boldsymbol{\mathit{C}}}
\newcommand{\II}{\boldsymbol{\mathit{I}}}
\newcommand{\MM}{\boldsymbol{\mathit{M}}}
\newcommand{\OO}{\boldsymbol{\mathit{O}}}
\newcommand{\PP}{\boldsymbol{\mathit{P}}}
\newcommand{\QQ}{\boldsymbol{\mathit{Q}}}
\newcommand{\UU}{\boldsymbol{\mathit{U}}}
\newcommand{\VV}{\boldsymbol{\mathit{V}}}
\newcommand{\WW}{\boldsymbol{\mathit{W}}}
\newcommand{\XX}{\boldsymbol{\mathit{X}}}
\newcommand{\YY}{\boldsymbol{\mathit{Y}}}
\renewcommand{\epsilon}{\varepsilon}
\theoremstyle{plain}
\newtheorem{theorem}{Theorem}[section]
\newtheorem{lemma}[theorem]{Lemma}
\theoremstyle{definition}
\theoremstyle{remark}
\newtheorem*{rep@theorem}{\rep@title}
\newcommand{\newreptheorem}[2]{%
\newenvironment{rep#1}[1]{%
 \def\rep@title{\bf #2 \ref{##1}}%
 \begin{rep@theorem}}%
 {\end{rep@theorem}}}
\icmltitlerunning{Iterative Hard Thresholding with Adaptive Regularization: Sparser
Solutions Without Sacrificing Runtime}
\title{Iterative Hard Thresholding with Adaptive Regularization: Sparser Solutions Without Sacrificing Runtime}
\author{
Kyriakos Axiotis\thanks{MIT, \tt{kaxiotis@mit.edu}}
\and
Maxim Sviridenko\thanks{Yahoo! Research, \tt{sviri@yahooinc.com}}
\date{}
}
\begin{document}

\ifdefined\arxiv
\maketitle
\else
\twocolumn[
\icmltitle{Iterative Hard Thresholding with Adaptive Regularization: Sparser
Solutions Without Sacrificing Runtime}

\begin{icmlauthorlist}
\icmlauthor{Kyriakos Axiotis}{MIT}
\icmlauthor{Maxim Sviridenko}{Yahoo}
\end{icmlauthorlist}

\icmlaffiliation{MIT}{MIT}
\icmlaffiliation{Yahoo}{Yahoo! Research}

\icmlcorrespondingauthor{Kyriakos Axiotis}{kaxiotis@mit.edu}
\icmlcorrespondingauthor{Maxim Sviridenko}{sviri@yahooinc.com}

\icmlkeywords{Machine Learning, ICML}

\vskip 0.3in
]
\fi

\ifdefined\arxiv
\else
\printAffiliationsAndNotice{}  %
\fi

\begin{abstract}

We propose a simple modification to the iterative hard thresholding (IHT) algorithm, which recovers
asymptotically sparser solutions as a function of the condition number.
When aiming to minimize a convex function $f(\xx)$ with condition number $\kappa$ subject to $\xx$ being an $s$-sparse vector,
the standard IHT guarantee is a solution with relaxed sparsity $O(s\kappa^2)$, while our proposed algorithm, \emph{regularized IHT},
returns a solution with sparsity $O(s\kappa)$. Our algorithm 
significantly improves over ARHT~\cite{axiotis2021sparse} which also finds a solution of sparsity $O(s\kappa)$,
as it does not require re-optimization in each iteration (and so is much faster), is deterministic, and does not require knowledge of the optimal solution value
$f(\xx^*)$ or the optimal sparsity level $s$.

Our main technical tool is an \emph{adaptive regularization} framework, in which the algorithm progressively learns the weights
of an $\ell_2$ regularization term that will allow convergence to sparser solutions. We also apply this framework to low rank
optimization, where we achieve a similar improvement of the best known condition number dependence from $\kappa^2$ to $\kappa$.
\end{abstract}

\section{Introduction}

\emph{Sparse optimization} is the task of optimizing a function $f$ over $s$-sparse vectors, i.e.
those with at most $s$ non-zero entries.
Examples of such 
optimization problems arise in machine learning, with the goal to make models smaller
for efficiency, generalization, or interpretability reasons,
and compressed sensing, where the goal is to recover an $s$-sparse signal from a small number of 
measurements. A closely related problem is \emph{low rank optimization}, where the sparsity
constraint is instead placed on the spectrum of the solution (which is a matrix). This problem is central
in matrix factorization, recommender systems, robust principal components analysis, among other tasks.
More generally, \emph{structured sparsity} constraints have the goal of capturing the special structure
of a particular task by restricting the set of solutions to those that are ``simple'' in an appropriate sense.
Examples include group sparsity, tree- and graph-structured sparsity. For more on 
generalized sparsity measures see e.g.~\cite{schmidt18}.

Among the huge number of algorithms that have been developed for the sparse optimization problems,
three stand out as the most popular ones:
\begin{itemize}
\item {The {\bf LASSO}~\cite{LASSO}, which works by relaxing the $\ell_0$ (sparsity)
constraint to an $\ell_1$ constraint, thus convexifying the problem.}
\item{{\bf Orthogonal matching pursuit} (OMP)~\cite{pati1993orthogonal}, which works by building
the solution greedily in an incremental fashion.}
\item{{\bf Iterative hard thresholding} (IHT)~\cite{IHT}, which performs projected gradient
descent on the set of sparse solutions.}
\end{itemize}
Among these, IHT is generally the most efficient, since it has essentially no overhead
over plain gradient descent, making it the tool of choice for large-scale applications. 

\subsection{Iterative Hard Thresholding (IHT)}

Consider the \emph{sparse convex optimization} problem
\begin{align}
\underset{\left\|\xx\right\|_0 \leq s}{\min}\, f(\xx)\,,
\label{eq:objective_intro}
\end{align}
where $f$ is convex and $\left\|\xx\right\|_0$ is the number of non-zero entries in the vector $\xx$,
i.e. the sparsity of $\xx$.
IHT works by repeatedly performing the following iteration
\begin{align}
\xx^{t+1} = H_{s'}\left(\xx^{t} - \eta\cdot \nabla f(\xx^{t})\right)\,,
\label{eq:IHT_step}
\end{align}
where $H_{s'}$ is the \emph{hard thresholding operator} that zeroes out all but the top $s'$ entries, for some
(potentially relaxed) sparsity level $s'$, and $\eta > 0$ is the step size.

As (\ref{eq:objective_intro}) is known to be NP-hard~\cite{Natarajan95} and even hard to approximate~\cite{FKT15},
an extra assumption needs to be made for the performance of the algorithm to be theoretically evaluated
in a meaningful way.
The most common assumption is that the \emph{(restricted) condition number}
of $f$ is bounded by $\kappa$
(or the \emph{restricted isometry property} constant is bounded by $\delta$~\cite{Candes08}), but
other assumptions have been studied as well, 
such as incoherence~\cite{donoho2003optimally}
and weak supermodularity~\cite{liberty2017greedy}.
The performance is then measured in terms of the sparsity $s'$ of the returned solution, 
as well as its error (value of $f$).

As it is known~\cite{JTK14}, IHT is guaranteed to return an $s'=O(s\kappa^2)$-sparse solution $\xx$ 
with $f(\xx)\leq f(\xx^*)+\eps$.
In fact, as we show in Section~\ref{sec:lower_bounds}, 
the $\kappa^2$ factor cannot be improved in the analysis.
Recently, \cite{axiotis2021sparse} presented an algorithm called ARHT,
which improves the sparsity to $s'=O(s\kappa)$. However, their algorithm is 
much less efficient than IHT, for many reasons.
So the question emerges:

\ifdefined\arxiv
\vskip 0.75cm
\fbox{
\begin{minipage}{40em}
\fi
\emph{Is there a sparse convex optimization algorithm that returns $O(s\kappa)$-sparse solutions, 
but whose runtime efficiency is comparable to IHT?}
\ifdefined\arxiv
\end{minipage}
}
\vskip 0.75cm
\fi

The main contribution of our work is to show that this goal can be achieved, and done so
by a surprisingly simple tweak to IHT.

\subsection{Reconciling Sparsity and Efficiency: Regularized IHT}

Our main result is the following theorem, which states that 
running IHT on an \emph{adaptively regularized} objective function returns $O(s\kappa)$-sparse
solutions that are $\eps$-optimal in function value, 
while having no significant runtime overhead over plain gradient descent.

\begin{theorem}[Regularized IHT]
Let $f\in\mathbb{R}^{n}\rightarrow\mathbb{R}$ be a convex function that is $\beta$-smooth
and $\alpha$-strongly convex\footnote{The theorem also holds if 
the smoothness and strong convexity constants are replaced by $(s'+s)$-\emph{restricted} smoothness and strong convexity constants.}, with
condition number $\kappa=\beta/\alpha$, and $\xx^*$
be an (unknown) $s$-sparse solution. Then,
running 
Algorithm~\ref{alg:regularized_iht} with $\eta = (2\beta)^{-1}$ and $c=s'/(4T)$
for 
\[ T = O\left(\kappa \log \frac{f(\xx^0) + (\beta/2)\left\|\xx^0\right\|_2^2 - f(\xx^*)}{\eps}\right) \]
iterations starting from an arbitrary  $s'=O(s\kappa)$-sparse solution $\xx^0$, 
the algorithm returns an $s'$-sparse solution $\xx^T$ such that
$f(\xx^T) \leq f(\xx^*) + \eps$.
Furthermore, each iteration requires
$O(1)$ evaluations of $f$, $\nabla f$, and $O(n)$ additional time.
\label{thm:regularized_iht}
\end{theorem}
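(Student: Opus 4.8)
The plan is to track a potential function that combines the objective $f$ with an $\ell_2$ term weighted by the regularization parameters that Algorithm~\ref{alg:regularized_iht} adaptively learns. Concretely, if at iteration $t$ the algorithm maintains weights $\vrho^t \in \bbR^n_{\geq 0}$ (so that the regularized objective is $g^t(\xx) = f(\xx) + \frac{1}{2}\sum_i \rho^t_i x_i^2$), I would study the progress in $g^t$. The key structural point behind adaptive regularization is that adding the $\ell_2$ term to a $\beta$-smooth, $\alpha$-strongly convex function keeps it $\beta' = \beta + \max_i\rho^t_i$-smooth and $\alpha$-strongly convex, so the effective condition number is controlled \emph{provided the weights stay bounded}; the whole point of the adaptive scheme is that the total weight budget is tied to $c = s'/(4T)$, which is why $T = O(\kappa\log(\cdot/\eps))$ suffices. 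So the first step is to write down the one-step inequality for projected gradient descent with step $\eta = (2\beta)^{-1}$ on $g^t$, using the standard IHT lemma (the hard-threshold of a gradient step does not increase a suitable surrogate), together with the strong convexity of $g^t$, to get a contraction of the form $g^t(\xx^{t+1}) - g^t(\xx^*) \lesssim (1 - \Omega(1/\kappa)) \big(g^t(\xx^t) - g^t(\xx^*)\big) + (\text{regularization correction})$.

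Second, I would handle the comparison between consecutive potentials $g^t$ and $g^{t+1}$: since the algorithm only \emph{increases} weights (or increases them on a controlled set of coordinates), $g^{t+1}(\xx) \geq g^t(\xx)$ pointwise, and the increase at $\xx^*$ is bounded because $\xx^*$ is $s$-sparse, so $\sum_i (\rho^{t+1}_i - \rho^t_i)(x^*_i)^2 \leq \|\xx^*\|_\infty^2 \cdot (\text{weight added})$, and over all $T$ iterations the total added weight is $O(Tc) = O(s')$; here is where the $s' = O(s\kappa)$ sizing and the choice $c = s'/(4T)$ enter to make the accumulated error at most $O(\eps)$ or absorbable into the contraction. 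Telescoping the per-step contraction against these monotone increments gives $g^T(\xx^T) - g^0(\xx^*) \leq (1-\Omega(1/\kappa))^T \big(g^0(\xx^0) - g^0(\xx^*)\big) + O(\eps)$, and since $\rho^0 = \beta$ (so $g^0(\xx) = f(\xx) + \frac{\beta}{2}\|\xx\|_2^2$, matching the numerator in the statement of $T$) and $g^T(\xx^T) \geq f(\xx^T)$ while $g^0(\xx^*) = f(\xx^*) + \frac{\beta}{2}\|\xx^*\|_2^2$, after choosing $T$ as in the theorem we get $f(\xx^T) \leq f(\xx^*) + \frac{\beta}{2}\|\xx^*\|_2^2 + \eps$ — and the $\frac{\beta}{2}\|\xx^*\|_2^2$ slack must be removed, which I expect is where most of the real work lies.

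The hardest part, then, is exactly that final slack: showing the adaptive scheme actually \emph{drives the regularization weights down} on the support of a near-optimal sparse solution, rather than leaving a spurious $\frac{\beta}{2}\|\xx^*\|_2^2$ term. I would argue this by a charging/amortization argument: whenever a coordinate carries large $\rho$-weight but the iterate wants mass there, the algorithm's update on the weights (the ``drop'' rule, presumably governed by constants like $\cdrop$) reduces that weight, and a careful accounting shows that over $T$ iterations the weight restricted to any fixed $s$-subset decreases geometrically while the weight summed over all coordinates stays within the $O(s')$ budget. This is the analogue of the ARHT potential argument but without re-optimization, so the analysis must be self-contained within a single gradient step. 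Once the on-support weights are shown to vanish (up to $\eps$), the slack term disappears and the bound $f(\xx^T) \leq f(\xx^*) + \eps$ follows. Finally, the per-iteration cost claim is immediate: one gradient evaluation, $O(1)$ extra function evaluations for the step-size/weight logic, a top-$s'$ selection in $O(n)$ time, and an $O(n)$ update of $\vrho^t$.
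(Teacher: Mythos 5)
There is a genuine gap, and it sits exactly where you flagged ``most of the real work.'' Your sketch assumes an unconditional per-step contraction $g^t(\xx^{t+1}) - g^t(\xx^*) \lesssim (1-\Omega(1/\kappa))\bigl(g^t(\xx^t)-g^t(\xx^*)\bigr)$ for hard-thresholded gradient steps at sparsity $s'=O(s\kappa)$. No such contraction is available at this sparsity level --- that is precisely the regime where plain IHT can be completely stuck (the paper's Appendix~\ref{sec:lower_bounds} exhibits a fixpoint with large error at $s'\leq 0.6 s\kappa^2$), and the standard IHT analysis only gives it at $s'=O(s\kappa^2)$. The paper's actual engine is a dichotomy (Lemma~\ref{lem:one_step}): each step either decreases the regularized objective by a $(16\kappa)^{-1}$ fraction of $g(\xx)-f(\xx^*)$ (note: measured against $f(\xx^*)$, not $g(\xx^*)$, so the $\tfrac{\beta}{2}\|\xx^*\|_2^2$ slack you then struggle to remove never appears), or the failure to progress forces the structural conditions $\|\xx_{S^*}\|_{\ww^2,2}^2 \geq (4\kappa+6)^{-1}\|\xx\|_{\ww,2}^2$ and $(\beta/2)\|\xx_{S^*}\|_{\ww^2,2}^2 \geq (8\kappa+8)^{-1}(g(\xx)-f(\xx^*))$. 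This characterization of the ``stuck'' states is what makes the weight update (which decreases $\ww$ proportionally to $(\ww\xx)^2$) provably concentrate an $\Omega(1/\kappa)$ fraction of its total decrease inside $S^*$; since the weight inside $S^*$ can only drop by $s$ in total, the number of non-progress iterations is bounded, and the thresholding-to-zero case is separately charged as a progress step. Your ``charging/amortization'' paragraph gestures at this but supplies no mechanism for why weights on the (unknown) support of $\xx^*$ would decay --- without the non-progress characterization, the algorithm has no reason to place mass on $S^*$ and hence no reason to unregularize it.

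There are also bookkeeping errors that would derail the telescoping even if the contraction were granted: Algorithm~\ref{alg:regularized_iht} starts from $\ww^0=\onev$ and only \emph{decreases} weights (update $\ww^{t+1}=(\ww^t - c\,(\ww^t\xx^t)^2/\|\xx^t\|_{\ww^t,2}^2)_{\geq 1/2}$, with values thresholded from $[1/2,1]$ down to $0$), so $g^{t+1}\leq g^t$ pointwise; your claim that weights only increase and $g^{t+1}\geq g^t$ reverses this, and the quantity to budget is the total weight \emph{decrease} (at most $2cT\leq s'/2$ globally, at most $s$ inside $S^*$), not an accumulated increase at $\xx^*$. Likewise the relevant smoothness fact is that $g$ is $2\beta$-smooth because $\ww\leq\onev$, not $\beta+\max_i\rho_i$ with growing $\rho$. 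The per-iteration cost claim is fine.
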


To achieve this result, we significantly refine and generalize the \emph{adaptive regularization} technique of~\cite{axiotis2021sparse}. 
This refined version fixes many of the shortcomings of the original, 
by (i) not requiring \emph{re-optimization} in every iteration (a relic of OMP-style algorithms),
(ii) taking $\tO{\kappa}$ instead of $\tO{s\kappa}$ iterations,
(iii) being \emph{deterministic}, 
(iv) not requiring knowledge of the optimal function value $f(x^*)$ thus avoiding the overhead of an outer binary search,
and (v) being more easily generalizable to other settings, like low rank minimization.

In short, our main idea is to run IHT on a regularized function 
\[ g(\xx) = f(\xx) + (\beta/2) \left\|\xx\right\|_{\ww,2}^2\,,\] 
where $\left\|\xx\right\|_{\ww,2}^2=\sum_{i=1}^nw_ix_i^2$ and $\ww$ are non-negative weights.
These weights change dynamically during the algorithm, in a way that depends on the value of $\xx$.
The effect is that now the IHT step will instead be given by
\begin{align*}
\xx^{t+1} 
& = H_{s'}\left(\left(\onev - 0.5 \ww^{t}\right)\xx^{t} - \eta \cdot \nabla f(\xx^{t})\right)\,,
\end{align*}
which is almost the same as (\ref{eq:IHT_step}), except that it has an extra term 
that biases the solution towards $\zerov$.
Additionally, in each step the weights $\ww^t$ are updated based on the current solution as
\begin{align*}
w^{t+1}_i = \left(w^{t}_i \cdot \left(\onev - 
c\cdot \frac{w^{t}_i (x^t_i)^2}{\left\|\xx^t\right\|_{w^{t},2}^2}\right)\right)_{\geq 1/2}
\end{align*}
for some parameter $c > 0$, where $(\cdot)_{\geq 1/2}$ denotes zeroing out all the entries that are $<1/2$ and keeping the others intact.

In Section~\ref{sec:adaptive_regularization}, we will go over the central ideas 
of our refined adaptive regularization technique, and also explain how it can be extended to
deal with more general sparsity measures.

\subsection{Beyond Sparsity: Low Rank Optimization}

As discussed, our new techniques transfer to the problem of minimizing a convex function 
under a rank constraint. In particular, we prove the following theorem:

\begin{theorem}[Adaptive Regularization for Low Rank Optimization]
Let $f\in\mathbb{R}^{m\times n}\rightarrow\mathbb{R}$ be a
convex function with condition number $\kappa$ and
consider the low rank minimization
problem
\begin{align}
\underset{\rank(\AA) \leq r}{\min}\, f(\AA)\,.
\label{eq:rank_objective_intro}
\end{align}
For any error parameter $\eps>0$,
there exists a polynomial time algorithm that returns a matrix $\AA$ with
$\rank(\AA) \leq O\left(r\left(\kappa+\log \frac{f(\OO)-f(\AA^*)}{\eps}\right)\right)$
and 
$f(\AA) \leq f(\AA^*) + \eps$, where 
$\OO$ is the all-zero matrix and 
$\AA^*$ is any rank-$r$ matrix.
\label{thm:rank_ARHT_plus}
\end{theorem}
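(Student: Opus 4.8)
The plan is to port the proof of Theorem~\ref{thm:regularized_iht} to the spectral setting, treating the rank constraint on $\AA$ as an $\ell_0$ constraint on its singular values. First I would reduce to the symmetric positive semidefinite case by a standard dilation: embedding $f$ as a function of the off-diagonal block of an $(m+n)\times(m+n)$ symmetric matrix, the rank bound becomes a bound on the number of nonzero \emph{eigenvalues}, restricted smoothness/strong convexity are preserved up to constants, and -- crucially -- every iterate comes with an orthonormal eigenbasis. In this representation I can define the regularized objective $g(\AA) = f(\AA) + (\beta/2)\,\langle \AA, \WW\AA\rangle$ (with $\beta$ the restricted smoothness constant) where $\WW$ acts diagonally in the eigenbasis of the current iterate, so that the regularized IHT step is exactly the spectral analogue of the vector update: a gradient step, a multiplicative shrinkage of the current eigenvalues by the factors $1 - 0.5 w_i$, and then truncation to the top $r'$ eigenvalues via an SVD, i.e.\ the operator $H_{s'}$ of Section~\ref{sec:objective_intro}'s discussion replaced by eigenvalue truncation, together with the analogous weight update on the eigenvalue coordinates.

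With this setup, the per-iteration progress argument carries over essentially verbatim from the vector case: decompose $\AA^t - \AA^*$ in the eigenbasis of $\AA^t$, invoke restricted smoothness and strong convexity on the subspace spanned by the eigenvectors of $\AA^t$ and of $\AA^*$ (dimension at most $r'+r$), and show that the combined potential -- the optimality gap of $g$ plus a ``weight potential'' tracking how much the weights still need to drop -- contracts by a constant factor every $O(\kappa)$ iterations, provided the working rank satisfies $r' = O(r\kappa)$. If nothing else changed, this would already give an $O(r\kappa)$-rank bound with $O(\kappa\log(1/\eps))$ iterations, mirroring Theorem~\ref{thm:regularized_iht} exactly.

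The genuinely new difficulty -- and, I believe, the source of the extra additive $O\!\left(r\log\frac{f(\OO)-f(\AA^*)}{\eps}\right)$ term in the rank bound -- is that the eigenbasis of $\AA^{t+1}$ is not the eigenbasis of $\AA^t$, so the adaptively learned weights cannot be transported from one iteration to the next the way fixed coordinate weights are in the vector algorithm. The way I would handle this is to run the algorithm in $O\!\left(\log\frac{f(\OO)-f(\AA^*)}{\eps}\right)$ phases: within a phase we hold a ``committed'' low-rank matrix frozen, run the adaptive-regularization engine above on the shifted objective $\AA \mapsto f(\text{committed} + \AA)$ with working rank $O(r\kappa)$ and a fresh set of weights, driving the optimality gap down by a constant factor in $O(\kappa)$ iterations; at the end of the phase we consolidate an $O(r)$-rank correction into the frozen part and restart the weights. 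Summing over phases, the rank is at all times at most $O(r\kappa)$ (the working part) plus $O(r)$ per phase over $O(\log\frac{f(\OO)-f(\AA^*)}{\eps})$ phases, which gives the claimed bound; the iteration count is $O(\kappa)$ per phase, hence $O\!\left(\kappa\log\frac{f(\OO)-f(\AA^*)}{\eps}\right)$ overall, which is polynomial, with each iteration dominated by a single SVD.

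The main obstacle is the phase-transition step: showing that a constant-factor decrease in the gap of a $\kappa$-conditioned problem can be ``consolidated'' into the committed iterate using only $O(r)$ additional rank, losslessly in function value. This is precisely where the adaptive regularization must do work beyond a plain greedy/OMP-style argument (which would pay $O(r\kappa)$ rank per phase, and hence $O(r\kappa\log(1/\eps))$ overall instead of $O(r\kappa + r\log(1/\eps))$): one needs the weights learned during the phase to certify that the relevant progress is captured by an $O(r)$-dimensional eigenspace before discarding the $O(r\kappa)$-dimensional scratch space. Controlling the interaction between the within-phase IHT analysis and this cross-phase rank accounting -- in particular ensuring the restricted smoothness/strong convexity constants used inside a phase are not degraded by the growing committed part -- is the technically delicate part of the argument.
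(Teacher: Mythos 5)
There is a genuine gap, and you have in fact pointed at it yourself: the ``consolidation'' step at the end of each phase is asserted, not proved, and it is precisely the content of the theorem. Your scheme needs the following statement: after driving the gap down by a constant factor using a scratch iterate of rank $O(r\kappa)$, there exists a rank-$O(r)$ correction that can be committed ``losslessly in function value.'' Nothing in the within-phase analysis you describe certifies this; the learned weights tell you (as in the vector case) that a constant fraction of the regularization mass correlates with the optimal subspace, but that does not imply the achieved function value is realized on an $O(r)$-dimensional eigenspace of the scratch iterate. Without that lemma your accounting collapses to the greedy baseline you mention, paying $O(r\kappa)$ rank per constant-factor phase and hence $O(r\kappa\log\frac{f(\OO)-f(\AA^*)}{\eps})$ overall, which is exactly the bound of \cite{axiotis2021local} that the theorem is supposed to beat. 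The paper does not use phases or consolidation at all: it keeps a single rank-$r'$ iterate, performs a local-search step (drop the smallest singular component, add the top rank-one component of the regularized gradient) followed by a fully corrective step, and obtains the additive $r\log\frac{f(\OO)-f(\AA^*)}{\eps}$ term by bounding the number of a particular kind of weight update (one in which $\mathrm{Tr}\left[\II-\WW\right]$ grows by at most $1$ while the regularized gap drops by a $(10r)^{-1}$ fraction), so the rank overhead is $r'=O\left(r\left(\kappa+\log\frac{f(\OO)-f(\AA^*)}{\eps}\right)\right)$ by a counting argument over iteration types, not by restarts.

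The second weak point is the claim that the one-step analysis ``carries over essentially verbatim'' once you dilate to the symmetric case and let $\WW$ act diagonally in the eigenbasis of the current iterate. The vector proof crucially charges the weight decrease against a \emph{fixed} support $S^*$, and the total charge is bounded by $s$ because the coordinates never move. In the spectral setting the eigenbasis changes every iteration, so weights that are diagonal in the current eigenbasis cannot be transported, and the bookkeeping against the fixed subspace $\im(\UU^*)$ breaks -- this is the non-commutativity issue the paper flags as the reason the sparse argument does not transfer directly. The paper's workaround is structurally different: it maintains two full PSD weight matrices $\WW,\YY$ (one for each side of the rectangular $\AA$), updates them multiplicatively ($\WW'=\WW-\WW\AA\AA^\top\WW/\langle\WW,\AA\AA^\top\rangle$, etc.), and measures progress toward the optimal subspace via the potential $\mathrm{Tr}\left[\vPi_{\im(\UU^*)}\WW\vPi_{\im(\UU^*)}\right]$, using von Neumann/H\"older trace inequalities and a bound of the form $\left|\langle\vPi,\AA\rangle\right|\le\mathrm{Tr}\left[H_r(\AA)\right]$ to control the cross terms; it also explicitly leaves an efficient matrix-IHT variant (which is what your per-phase engine presupposes, with its $O(\kappa)$-iteration progress guarantee) as an open problem, proving only a $(16\kappa r)^{-1}$ per-step decrease for the local-search step. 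So both the consolidation lemma and the ``verbatim'' spectral one-step lemma would need full proofs before your outline establishes the theorem.
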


This result can be compared to the Greedy algorithm of~\cite{axiotis2021local},
which works by incrementally adding a rank-$1$ component to the solution and achieves
rank $O(r\kappa \log \frac{f(\OO) - f(\AA^*)}{\epsilon})$, as well as their
Local Search algorithm, which works by simultaneously adding a rank-$1$ component and 
removing another, and achieves rank $O(r\kappa^2)$.
In contrast, our Theorem~\ref{thm:rank_ARHT_plus} returns a solution with
rank $O\left(r\left(\kappa + \log \frac{f(\OO)-f(\AA^*)}{\epsilon}\right)\right)$.

\subsection{Related Work}

The sparse optimization and compressed sensing literature has a wealth of different
algorithms and analyses. Examples include the seminal paper of \cite{Candes08} on recovery
with LASSO and followup works~\cite{Foucart10}, 
the CoSaMP algorithm~\cite{CoSaMP}, 
orthogonal matching pursuit and variants~\cite{Natarajan95,SSZ10,JTD11,axiotis2021sparse}
iterative hard thresholding~\cite{IHT,JTK14}, 
hard thresholding pursuit~\cite{foucart2011hard,YLZ16,SL17,SL17_2},
partial hard thresholding~\cite{JTD17},
and message passing algorithms~\cite{donoho2009message}.
For a survey on compressed sensing, see \cite{BCKV15,foucart2017mathematical}.

A family of algorithms that is closely related to IHT
are Frank-Wolfe (FW) methods~\cite{frank1956algorithm},
which have been used for dealing with generalized sparsity constraints~\cite{jaggi2013revisiting}.
The basic
version can be viewed as a variant of OMP without re-optimization in each iteration. 
Block-FW methods are more resemblant of IHT without the projection step, see e.g. 
\cite{allen2017linear} for an application to the low rank minimization problem.

\cite{liu2020between} presented an interesting connection between hard and soft thresholding algorithms by studying a
concavity property of the thresholding operator, and proposed new thresholding operators.

Recently it has been shown~\cite{peste2021ac} that IHT can be guaranteed to work for sparse optimization
of \emph{non-convex} functions, under appropriate assumptions. In particular, \cite{peste2021ac}
studies a stochastic version of IHT for sparse deep learning
problems, from both a theoretical and practical standpoint.

\section{Background}
\paragraph{Notation.} We denote $[n] = \{1,2,\dots,n\}$. We will use {\bf bold} to refer to vectors or matrices. We denote by $\zerov$ the all-zero vector, $\onev$ the all-one vector,
$\OO$ the all-zero matrix, and by $\II$ the identity matrix (with dimensions understood from the context). Additionally, we will denote by $\onev_i$ the $i$-th basis vector, i.e.
the vector that is $0$ everywhere except at position $i$.

In order to ease notation and where not ambiguous for two vectors $\xx, \yy\in \mathbb{R}^n$, we denote by $\xx \yy\in \mathbb{R}^n$ a vector with elements $(\xx\yy)_i=x_iy_i$, i.e.
the element-wise multiplication of two vectors $\xx$ and $\yy$. In contrast,
we denote their inner product by $\langle \xx,\yy\rangle$ or $\xx^\top \yy$. Similarly, $\xx^2\in \mathbb{R}^n$ will be the element-wise square of vector $\xx$.

\paragraph{Restrictions and Thresholding.}
For any vector $\xx\in\mathbb{R}^n$ and set $S\subseteq[n]$, we denote by
$\xx_S$ the vector that results from $\xx$ after zeroing out all the entries except those in positions given by indices in $S$. 
For any $t\in\mathbb{R}$ and $\xx\in\mathbb{R}^n$, we denote by $\xx_{\geq t}$ the vector that results from setting all the entries of $\xx$ that are less than
$t$ to $0$.
For a function $f\in\mathbb{R}^n\rightarrow\mathbb{R}$, its gradient $\nabla f(\xx)$, and a set of indices $S\subseteq [n]$, we denote $\nabla_S f(\xx) = (\nabla f(\xx))_S$.
We define the \emph{thresholding operator} $H_s(\xx)$
for any vector $\xx$ as $\xx_S$, where $S$ are the $s$ entries of $\xx$ with largest absolute value (breaking ties arbitrarily). We override the thresholding operator $H_r(\AA)$ when the argument is
a matrix $\AA$, defining $H_r(\AA) = \UU \mathrm{diag}\left(H_r(\vlambda)\right)\VV^\top$, where $\UU \mathrm{diag}(\vlambda) \VV^\top$ is the singular value decomposition of $\AA$, i.e.
$H_r(\AA)$ only keeps the top $r$ singular components of $\AA$.

\paragraph{Norms and Inner Products.}
For any $p\in(0,\infty)$ and weight vector $\ww\geq \zerov$, we define the weighted $\ell_p$ norm of a vector $\xx\in\mathbb{R}^n$ as:
\begin{align*}
\left\|\xx\right\|_{p,\ww} = \left(\sum\limits_{i} w_i x_i^p \right)^{1/p}\,.
\end{align*}
For $p=0$, we denote $\left\|\xx\right\|_0 = \left|\{i\ |\ x_i\neq 0\}\right|$ to be the \emph{sparsity} of $\xx$.
For $p=\infty$, we denote $\left\|\xx\right\|_\infty = \max_i |x_i|$ to be the maximum absolute value of $\xx$.

For a matrix $\AA\in\mathbb{R}^{m\times n}$, we let $\left\|\AA\right\|_2$ be its spectral norm, $\left\|\AA\right\|_F$ be its Frobenius norm,
and $\left\|\AA\right\|_*$ be its nuclear norm (i.e. sum of singular values). For any $\BB\in\mathbb{R}^{m\times n}$, we denote the Frobenius inner product
as $\langle \AA,\BB\rangle = \mathrm{Tr}\left[\AA^\top \BB\right]$.

\paragraph{Smoothness, strong convexity, condition number.}
A differentiable function $f:\mathbb{R}^n\rightarrow\mathbb{R}$ is called \emph{convex} if for any
$\xx,\yy\in\mathbb{R}^n$ we have $f(\yy) \geq f(\xx) + \langle \nabla f(\xx), \yy-\xx\rangle$.
Furthermore, 
$f$ is called \emph{$\beta$-smooth} for some real number $\beta > 0$ if for any $\xx,\yy\in\mathbb{R}^n$
we have
$f(\yy) \leq f(\xx) + \langle \nabla f(\xx), \yy-\xx\rangle + (\beta/2) \left\|\yy-\xx\right\|_2^2$
and \emph{$\alpha$-strongly convex} for some real number $\alpha > 0$ if for any 
$\xx,\yy\in\mathbb{R}^n$ we have
$f(\yy) \geq f(\xx) + \langle \nabla f(\xx), \yy-\xx\rangle + (\alpha/2) \left\|\yy-\xx\right\|_2^2$.
We call $\kappa:=\beta/\alpha$ the \emph{condition number} of $f$. If $f$ is only $\beta$-smooth
along $s$-sparse directions (i.e. only for $\xx,\yy\in\mathbb{R}^n$ such that 
$\left\|\yy-\xx\right\|_0 \leq s$), then we call $f$ $\beta$-smooth \emph{at sparsity level $s$}
and denote the smallest such $\beta$ by $\beta_s$ and call it the 
\emph{restricted smoothness constant} (at sparsity level $s$).
We analogously define the \emph{restricted strong convexity constant} $\alpha_s$, as well as the
$s$-\emph{restricted condition number} $\kappa_s := \beta_s / \alpha_s$.

\paragraph{Projections.}

Given a subspace $\mathcal{V}$, we will denote the orthogonal projection onto $\mathcal{V}$ as 
$\vPi_{\mathcal{V}}$. In particular, for any matrix $\AA\in\mathbb{R}^{m\times n}$ we denote by $\im(\AA) = \{\AA\xx\ |\ \xx\in\mathbb{R}^n\}$ the \emph{image} of $\AA$
and by $\ker(\AA) = \{\xx\ |\ \AA^\top \xx = \zerov\}$ the \emph{kernel} of $\AA$. Therefore, $\vPi_{\im(\AA)} = \AA\left(\AA^\top \AA\right)^+\AA^\top$ is the orthogonal projection
onto the image of $\AA$ and $\vPi_{\ker(\AA^\top)} = \II - \vPi_{\im(\AA)}$ the orthogonal projection onto the kernel of $\AA^\top$, where $(\cdot)^+$ denotes the matrix pseudoinverse.

\section{The Adaptive Regularization Method}
\label{sec:adaptive_regularization}

Consider the sparse optimization problem
\begin{align}
\underset{\left\|\xx\right\|_0 \leq s}{\min}\, f(\xx)
\label{eq:objective}
\end{align}
on a convex function $f$ with condition number at most $\kappa$,
and an optimal solution $\xx^*$ that is supported on the set of indices $S^*\subseteq [n]$.

The main hurdle towards solving this problem is that it is NP hard.
Therefore, it is common to relax it by a factor depending on $\kappa$.
In fact, IHT requires relaxing the sparsity constraint by a factor of $O(\kappa^2)$ (i.e. $\left\|\xx\right\|_0 \leq O(s \kappa^2)$), in order to return a near-optimal solution.
Also, the $\kappa^2$ factor is tight for IHT (see Appendix~\ref{sec:lower_bounds}).

\paragraph{Remark} We state all our results in terms of the condition number $\kappa$, even though the statements can be strengthened
to depend on the \emph{restricted} condition number $\kappa_{s'+s}$, specifically the condition number restricted on $(s'+s)$-sparse directions. We state our results in this weaker form for clarity of presentation.

\subsection{Regularized IHT}
Perhaps surprisingly, there is a way to \emph{regularize} the objective by a weighted $\ell_2$ norm 
so that running IHT on the new objective will only require
relaxing the sparsity by $O(\kappa)$:
\begin{align}
\underset{\left\|\xx\right\|_0 \leq s}{\min}\, f(\xx) + (\beta/2) \left\|\xx\right\|_{\ww,2}^2\,.
\label{eq:regularized_objective}
\end{align}
One way to do this is by setting the weights $\ww$ to be $1$ everywhere except in the indices from $S^*$, where it is set to $0$.
An inquisitive reader will protest that this is not a very useful statement, since it requires knowledge of $S^*$, which was our goal to begin with.
In fact, we could just as easily have used the regularizer $(\beta/2) \left\|\xx - \xx^*\right\|_{2}^2$, thus penalizing everything that is far from the optimum!

\subsection{Learning Weights}
Our main contribution is to show that the optimal weights $\ww$ can in fact be \emph{learned} in the 
duration of the algorithm\footnote{The idea of adaptively learning regularization weights
looks on the surface similar to adaptive gradient algorithms such as
AdaGrad~\cite{duchi2011adaptive}. An important difference is that these algorithms
regularize the function around
the current solution, while we regularize it around the origin. Still, this is a potentially
intriguing connection that deserves to be investigated further.}.
More precisely, consider running IHT starting from the setting of $\ww=\onev$. The regularized objective (\ref{eq:regularized_objective}) is now
$O(1)$-conditioned, which is great news. On the other hand, (\ref{eq:regularized_objective}) is not what we set out to minimize.
In other words, even though this approach might work great for minimizing (\ref{eq:regularized_objective}), it might (and generally will) fail to achieve sufficient decrease
in (\ref{eq:objective})---one could view this as the algorithm getting trapped in a local minimum.

Our main technical tool is to characterize these local minima, by showing that they can only manifest themselves 
if the current solution $\xx$ satisfies the following condition:
\begin{align}
\left\|\xx_{S^*}\right\|_{\ww,2}^2 \geq \Omega(\kappa^{-1}) \left\|\xx\right\|_{\ww,2}^2\,.
\label{eq:nonprogress}
\end{align}
In words, this means that a significant fraction of the mass of the current solution lies in the support $S^*$ of the optimal solution.
Interestingly, this gives us enough information based on which to update the regularization weights $\ww$ in a way that the sum of weights in
$S^*$ drops fast enough compared to the total sum of weights. This implies that the vector $\ww$ moves in a direction that correlates with the direction of the optimal weight vector.

These are the core ideas needed to bring the sparsity overhead of IHT from $O(\kappa^2)$ down to $O(\kappa)$.

\subsection{Beyond Sparsity: Learning Subspaces}

One can summarize the approach of the previous section in the following more general way: If we know that the optimal solution $\xx^*$ lies in a particular low-dimensional subspace
(in our case this was the span of $\onev_i$ for all $i\in S^*$), then we can define a regularization term that penalizes all the solutions based on their distance to that subspace.
Of course, this subspace is unknown to us, but we can try to adaptively modify the regularization term every time the algorithm gets stuck, 
just as we did in the previous section.

More concretely, given a collection $\cA$ of unit vectors from $\mathbb{R}^n$ (commonly called \emph{atoms}), we define the following problem:
\begin{align}
\underset{\mathrm{rank}_{\cA}(\xx) \leq r}{\min}\, f(\xx)\,,
\label{eq:objective2}
\end{align}
where $\mathrm{rank}_{\cA}(\xx)$ is the smallest number of vectors from $\cA$ such that $\xx$ can be written as their linear combination. We can pick
$\cA = \{\onev_1,\onev_2,\dots,\onev_n\}$ to obtain the sparse optimization problem,
$\cA = \{\mathrm{vec}(\uu\vv^\top) \ |\ \left\|\uu\right\|_2 = \left\|\vv\right\|_2 = 1\}$ for the low rank minimization problem, and other choices of $\cA$
can capture more sophisticated problem constraints such as graph structure.
Defining an IHT variant for these more general settings is usually straightforward, although the analysis for even obtaining a rank overhead of $O(\kappa^2)$ 
does not trivially follow and depends on the structure of $\cA$.

So, how would a regularizer look in this more general setting? Given our above discussion, it is fairly simple to deduce it. Consider a decomposition of $\xx$ as
the sum of rank-$1$ components from $\cA = \{\aa_1,\aa_2,\dots,\aa_{|\cA|}\}$:
\[ \xx=\sum\limits_{i\in S} \aa_i \,, \]
where $\rank_{\cA}(\aa_i) = 1$, and let $L^* = \mathrm{span}(\{\aa_i\ |\ i\in S^*\})$ be a low-dimensional subspace that contains the optimal solution
and $L_{\perp}^*$ is its complement. We can then define the regularizer
\begin{align*}
\Phi^*(\xx) = (\beta/2) \sum\limits_{i\in S} \left\|\vPi_{L_{\perp}^*} \aa_i\right\|_2^2\,,
\end{align*}
where $\vPi_{L_{\perp}^*}$ is the orthogonal projection onto the subspace perpendicular to $L^*$---in other words $\left\|\vPi_{L_{\perp}^*} \aa_i\right\|_2$ is the $\ell_2$ distance
from $\aa_i$ to $L^*$.  An equivalent but slightly more concise way is to write:
\begin{align*}
\Phi^*(\xx) 
& = (\beta/2) \left\langle\vPi_{L_{\perp}^*}, \sum\limits_{i\in S} \aa_i \aa_i^\top\right\rangle\,.
\end{align*}
Then, we can replace the unknown projection matrix $\vPi_{L_{\perp}^*}$ by a weight matrix $\WW$ initialized at $\II$, and proceed by adaptively modifying $\WW$ as we did in the previous section.

It should be noted that the full analysis of this framework is not automatic for general $\cA$, 
and there are several technical challenges that arise depending on the choice of $\cA$. 
In particular, it does not directly apply to the low rank minimization case, and we end up using a different choice of regularizer.
However, the discussion in this section
should serve as a basic framework for improving the IHT analysis in more general settings, 
as in particular it did to motivate 
the low rank optimization analysis that we will present in Section~\ref{sec:lowrank_optimization}.

\section{Sparse Optimization Using Regularized IHT}

The main result of this section is an efficient algorithm for sparse optimization of convex functions that,
even though is a slight modification of IHT, improves the sparsity by an $O(\kappa)$ factor, where $\kappa$ is the condition number.
The regularized IHT algorithm is presented in Algorithm~\ref{alg:regularized_iht} and its analysis is in Theorem~\ref{thm:regularized_iht}, whose proof can be found in Appendix~\ref{sec:proof_regularized_iht}.

\begin{algorithm}%
   \caption{Regularized IHT}
   \label{alg:regularized_iht}
\begin{algorithmic}
\STATE $\xx^0$: initial $s'$-sparse solution
\STATE $\ww^0 = \onev$: initial regularization weights
\STATE $\eta$: step size, $T$: \#iterations
\STATE $c$: weight step size
\FOR{$t=0\dots T-1$}
	\STATE $\xx^{t+1} = H_{s'}\left((\onev-0.5\ww^{t}) \xx^{t} - \eta \cdot \nabla f(\xx^{t})\right)$
	\STATE $\ww^{t+1} = \left(\ww^{t} - c \cdot (\ww^{t}\xx^{t})^2 / \left\|\xx^{t}\right\|_{\ww^{t},2}^2\right)_{\geq 1/2}$
	\IF {$f(\xx^{t+1}) + (4\eta)^{-1}\left\|\xx^{t+1}\right\|_{\ww^{t+1},2}^2 > f(\xx^{t}) + (4\eta)^{-1}\left\|\xx^{t}\right\|_{\ww^{t+1},2}^2$}
		\STATE $\xx^{t+1} = \xx^t$ \COMMENT{\textcolor{gray}{In practice there is no need to perform this step.}}
	\ENDIF
\ENDFOR
\end{algorithmic}
\end{algorithm}

\begin{reptheorem}{thm:regularized_iht}[Regularized IHT]
Let $f\in\mathbb{R}^{n}\rightarrow\mathbb{R}$ be a convex function that is $\beta$-smooth
and $\alpha$-strongly convex, with
condition number $\kappa=\beta/\alpha$, and $\xx^*$
be an (unknown) $s$-sparse solution. Then,
running 
Algorithm~\ref{alg:regularized_iht} with $\eta = (2\beta)^{-1}$ and $c=s'/(4T)$
for 
\[ T = O\left(\kappa \log \frac{f(\xx^0) + (\beta/2)\left\|\xx^0\right\|_2^2 - f(\xx^*)}{\eps}\right) \]
iterations starting from an arbitrary $s'=O(s\kappa)$-sparse solution $\xx^0$, 
the algorithm returns an $s'$-sparse solution $\xx^T$ such that
$f(\xx^T) \leq f(\xx^*) + \eps$.
Furthermore, each iteration requires
$O(1)$ evaluations of $f$, $\nabla f$, and $O(n)$ additional time.
\end{reptheorem}

The main ingredient for proving Theorem~\ref{thm:regularized_iht} is Lemma~\ref{lem:one_step},
which states that each step of the algorithm
either makes substantial (multiplicative) progress 
in an appropriately regularized function $f(\xx) + (\beta/2)\left\|\xx\right\|_{\ww,2}^2$, or a significant fraction of the mass of $\xx^2$
lies in $S^*$, which is the support of the target solution. This latter condition allows us to adapt the weights $\ww$
in order to obtain a new regularization function that penalizes the target solution less.
The proof of the lemma can be found in Appendix~\ref{sec:proof_lem_one_step}.
\begin{lemma}[Regularized IHT step progress]
Let $f\in\mathbb{R}^{n}\rightarrow\mathbb{R}$ be a convex 
function that is $\beta$-smooth and $\alpha$-strongly convex, 
$\kappa=\beta/\alpha$ be its condition number, and $\xx^*$ be any $s$-sparse
solution.

Given any $s'$-sparse solution $\xx\in\mathbb{R}^n$ where
\begin{align*}
s' \geq (128\kappa+2)s
\end{align*}
and a weight vector $\ww\in\left(\{0\}\cup[1/2,1]\right)^n$ 
such that $\left\|\ww\right\|_1 \geq n - s'/2$, we make the following update:
\[ \xx' = H_{s'}\left((\onev - 0.5 \ww) \xx - (2\beta)^{-1} \nabla f(\xx)\right) \,. \]
Then, at least one of the following two conditions holds:
\begin{itemize}
\item {
Updating $\xx$ makes regularized progress:
\[ g(\xx') \leq g(\xx) - (16\kappa)^{-1} (g(\xx) - f(\xx^*))\,,\]
where 
\[ g(\xx) := f(\xx) + (\beta/2) \left\|\xx\right\|_{\ww,2}^2\]
is the $\ell_2$-regularized version of $f$ with weights given by $\ww$.
Note: The regularized progress statement is true as long as $\xx$ is suboptimal, i.e. $g(\xx) > f(\xx^*)$. Otherwise, we just
have $g(\xx') \leq g(\xx)$.
}
\item {$\xx$ is significantly correlated to the optimal support $S^* := \mathrm{supp}(\xx^*)$:
\[ \left\|\xx_{S^*}\right\|_{\ww^2,2}^2 \geq (4\kappa+6)^{-1} \left\|\xx\right\|_{\ww,2}^2 \,,\]
and
the regularization term restricted to $S^*$ is non-negligible:
\begin{align*}
(\beta/2)\left\|\xx_{S^*}\right\|_{\ww^2,2}^2
\geq (8\kappa+8)^{-1} \left(g(\xx) - f(\xx^*)\right)\,.
\end{align*}}
\end{itemize}
\label{lem:one_step}
\end{lemma}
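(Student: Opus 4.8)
The plan is to analyze the IHT step on the regularized function $g(\xx) = f(\xx) + (\beta/2)\|\xx\|_{\ww,2}^2$, and to argue a standard ``progress or correlation'' dichotomy. First I would observe that $g$ is $2\beta$-smooth (since $f$ is $\beta$-smooth and the added weighted quadratic is $\beta\|\ww\|_\infty \le \beta$-smooth), so the step $\xx' = H_{s'}((\onev - 0.5\ww)\xx - (2\beta)^{-1}\nabla f(\xx))$ is exactly a hard-thresholded gradient step on $g$ with step size $\eta = (2\beta)^{-1} = (2\cdot(2\beta))^{-1}\cdot 2$; more precisely $\nabla g(\xx) = \nabla f(\xx) + \beta\ww\xx$, so $\xx - (2\beta)^{-1}\nabla g(\xx) = (\onev - 0.5\ww)\xx - (2\beta)^{-1}\nabla f(\xx)$, confirming the update is the projected-gradient step for $g$ with $\eta = (2\beta)^{-1}$, i.e. half the inverse smoothness of $g$. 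I would then run the usual IHT one-step analysis: let $\yy = \xx - \eta\nabla g(\xx)$ be the pre-thresholding point; by smoothness of $g$, $g(\xx') \le g(\xx) + \langle \nabla g(\xx), \xx'-\xx\rangle + \beta\|\xx'-\xx\|_2^2$, and after completing the square this is bounded in terms of $\|\xx' - \yy\|_2^2 - \|\xx-\yy\|_2^2$, which (since $\xx'$ is the best $s'$-sparse approximation to $\yy$) is controlled by how much better any competing $s'$-sparse vector, in particular one supported on $\mathrm{supp}(\xx)\cup S^*$, approximates $\yy$.

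Next I would compare against the ``ideal'' target. Define the comparison vector to be (a suitable scaling toward) $\xx^*$ or $\xx^* $ together with keeping the large coordinates of $\xx$; the key point is that since $s' \ge (128\kappa+2)s$, the set $\mathrm{supp}(\xx)\cup S^*$ has size $\le s' + s$, and the top-$s'$ truncation loses only the smallest coordinates, whose total mass I can bound by an averaging argument ($s'$ is large compared to $s$). Plugging the competitor into the one-step bound and using strong convexity of $g$ (it is $\alpha$-strongly convex, inheriting from $f$) to relate $\langle \nabla g(\xx), \xx - \xx^*\rangle$ to $g(\xx) - g(\xx^*) \ge g(\xx) - f(\xx^*) - (\beta/2)\|\xx^*\|_{\ww,2}^2$, I would arrive at an inequality of the shape
\begin{align*}
g(\xx') - g(\xx) \le -c_1 \kappa^{-1}(g(\xx) - f(\xx^*)) + c_2 (\beta/2)\|\xx_{S^*}\|_{\ww^2,2}^2 + (\text{error terms}),
\end{align*}
where the $\|\xx_{S^*}\|_{\ww^2,2}^2$ term arises because the competitor differs from $\xx$ on $S^*$ and the weighted-quadratic part of $g$ contributes $\ww$-weighted mass there (the square on $\ww$ comes from the gradient of the quadratic being $\beta\ww\xx$, and then pairing against coordinates in $S^*$), and the error terms come from the truncation loss and are absorbed using the largeness of $s'$ and the fact that $\|\ww\|_1 \ge n - s'/2$ (so the regularizer is ``close'' to the full $\ell_2$ norm, letting me compare $\|\cdot\|_{\ww,2}$ quantities to unweighted ones on the relevant small sets). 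From such an inequality the dichotomy is immediate: either the $\|\xx_{S^*}\|_{\ww^2,2}^2$ term is small relative to $\kappa^{-1}(g(\xx)-f(\xx^*))$, in which case the first bullet's multiplicative progress $g(\xx') \le g(\xx) - (16\kappa)^{-1}(g(\xx)-f(\xx^*))$ holds; or it is large, giving the second bullet's two conclusions (the bound $\|\xx_{S^*}\|_{\ww^2,2}^2 \ge (4\kappa+6)^{-1}\|\xx\|_{\ww,2}^2$ and $(\beta/2)\|\xx_{S^*}\|_{\ww^2,2}^2 \ge (8\kappa+8)^{-1}(g(\xx)-f(\xx^*))$). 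The ``if'' branch in the algorithm that resets $\xx^{t+1} = \xx^t$ guarantees that in the progress case we never move backward, handling the degenerate subcase $g(\xx) \le f(\xx^*)$.

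The main obstacle I expect is bookkeeping the interplay between the weighted norm $\|\cdot\|_{\ww,2}$ and the unweighted $\ell_2$ geometry of hard thresholding: the truncation operator $H_{s'}$ is optimal for the unweighted norm on $\yy$, but the progress is measured in the $\ww$-weighted regularized $g$, and the correlation conclusion is stated with $\ww^2$ weights. Reconciling these — showing the truncation error is genuinely lower-order once $s' \gtrsim \kappa s$, and tracking where the extra factor of $\ww$ appears so that the $S^*$-term comes out as $\|\xx_{S^*}\|_{\ww^2,2}^2$ rather than $\|\xx_{S^*}\|_{\ww,2}^2$ — is the delicate part, and is exactly where the condition $\|\ww\|_1 \ge n - s'/2$ and the entrywise bound $\ww \in (\{0\}\cup[1/2,1])^n$ get used (the latter ensures $\ww^2$ and $\ww$ are within a factor of $2$ on the support of $\ww$, so the two weightings are interchangeable up to constants). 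The rest is the routine completing-of-squares and constant-chasing to land on the stated $16\kappa$, $4\kappa+6$, and $8\kappa+8$ denominators.
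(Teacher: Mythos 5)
Your high-level architecture does match the paper's proof (exploit $2\beta$-smoothness of $g$, note the update is an IHT step on $g$ with $\eta=(2\beta)^{-1}$, compare the thresholded point against a competitor that touches $S^*$, invoke strong convexity, and read the dichotomy off the failure of progress). However, there are two genuine gaps in how you propose to execute it. First, you plan to use strong convexity of $g$ and compare against $g(\xx^*)=f(\xx^*)+(\beta/2)\left\|\xx^*\right\|_{\ww,2}^2$. This leaves behind the term $(\beta/2)\left\|\xx^*\right\|_{\ww,2}^2$, which measures the mass of the \emph{unknown optimum} on $S^*$ and cannot be converted into the quantity $\left\|\xx_{S^*}\right\|_{\ww^2,2}^2$ of the second bullet: nothing bounds $\xx^*$ by the current iterate, and the theorem's weight-update argument needs the correlation to be in terms of $(\ww\xx)^2$ of the current iterate. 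The paper avoids this by applying strong convexity to $f$ (not $g$), writing $\nabla f(\xx)=\nabla g(\xx)-\beta\ww\xx$, splitting the cross term $-\beta\langle\ww\xx,\xx^*-\xx\rangle$ over $S\cap S^*$ and $S\setminus S^*$ (on the latter $\xx^*=\zerov$, which yields the helpful positive term $\beta\left\|\xx_{S\setminus S^*}\right\|_{\ww,2}^2$), and completing the square of the $S\cap S^*$ part against $(\alpha/4)\left\|\xx^*-\xx\right\|_2^2$; that is exactly where the $-(\kappa\beta)\left\|\xx_{S\cap S^*}\right\|_{\ww^2,2}^2$ term, with the current iterate and $\ww^2$ weights, comes from. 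Your parenthetical remark about the $\ww^2$ weights gestures at this mechanism, but it is not what your stated route delivers.

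Second, your sketched inequality $g(\xx')-g(\xx)\le -c_1\kappa^{-1}(g(\xx)-f(\xx^*))+c_2(\beta/2)\left\|\xx_{S^*}\right\|_{\ww^2,2}^2+\text{(error)}$ would, upon negating the first bullet, give only the second condition of the second bullet; the first condition $\left\|\xx_{S^*}\right\|_{\ww^2,2}^2\ge(4\kappa+6)^{-1}\left\|\xx\right\|_{\ww,2}^2$ requires the final inequality to also carry a positive multiple of $\left\|\xx_{S\setminus S^*}\right\|_{\ww,2}^2$, which is absent from your outline. Keeping that term alive is where the hypotheses are actually spent: the thresholding loss (the entries dropped from $S$) is bounded by averaging over the regularized set $R=\{i\in S:\ w_i>0\}$, and the condition $\left\|\ww\right\|_1\ge n-s'/2$ is used to show $|R|\ge s'/2$ (not that the regularizer is close to the unweighted $\ell_2$ norm); the gradient part of the dropped entries is controlled by $\left\|\nabla_S g(\xx)\right\|_2^2\le(\alpha/4)(g(\xx)-f(\xx^*))$, which is a \emph{second} consequence of non-progress obtained by taking the trivial exchange (empty inserted/removed comparison sets); and $s'\ge(128\kappa+2)s$ is what makes the dropped-mass coefficient at most $\beta/4$, so the net coefficient of $\left\|\xx_{S\setminus S^*}\right\|_{\ww,2}^2$ stays positive. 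Without these steps, neither the correlation-with-$\left\|\xx\right\|_{\ww,2}^2$ conclusion nor the stated constants can be reached, so the proposal as written has a real gap rather than just missing bookkeeping.
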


\paragraph{Comparison to ARHT.} The ARHT algorithm of \cite{axiotis2021sparse} is also able to achieve a sparsity bound of $O(s\kappa)$.
However, their algorithm is not practically desirable for a variety of reasons. 
\begin{itemize}
\item{First of all, it follows the OMP (more accurately, OMP with Removals) paradigm,
which makes local changes to the support of the solution by inserting or removing a single element of the support, and then \emph{fully re-optimizing} the function
on its restriction to this support. Even though the support will generally be very small compared to the ambient dimension $n$, this is still a significant runtime overhead.
In contrast, regularized IHT does not require re-optimization.

Additionally, the fact that in the ARHT only one new element is added at a time leads to an iteration count that scales with $s\kappa$, instead of the $\kappa$ of regularized IHT. This is
a significant speedup, since both algorithms have to evaluate the gradient in each iteration. Therefore, regularized IHT will require $O(s)$ times fewer gradient evaluations.
}
\item{When faced with the non-progress condition, in which the regularized function value does not decrease sufficiently,
ARHT moves by selecting a random index $i$ with probability proportional to $x_i^2$, and proceeds
to \emph{unregularize} this element, i.e. remove it from the sum of regularization terms. Instead, our algorithm is completely deterministic. This is achieved by allowing a
\emph{weighted} regularization term, and 
gradually reducing the regularization weights instead of dropping terms.
}
\item{ARHT requires knowledge of the optimal function value $f(\xx^*)$. The reason is that in each iteration they need to gauge whether enough progress was made in reducing
the value of the regularized function $g$, compared to how far it is from the optimal function value. If so, they would perform the unregularization step.
In contrast, our analysis does not require these two cases (updates to $\xx$ or $\ww$) to be exclusive, and in fact simultaneously updates both, regardless of how much
progress was made in $g$. Thus, our algorithm avoids the expensive overhead of an outer binary search over the optimal value $f(\xx^*)$.
}
\end{itemize}

For all these reasons, as well as its striking simplicity, we believe that regularized IHT can prove to be a useful practical sparse optimization tool.

\section{Low Rank Optimization Using Regularized Local Search}
\label{sec:lowrank_optimization}

In this section we present a regularized local search algorithm for low rank optimization
of convex functions, that returns
an $\epsilon$-optimal solution with rank 
$O\left(r\left(\kappa+\log\frac{f(\OO)- f(\AA^*)}{\epsilon}\right)\right)$,
where $r$ is the target rank. The algorithm is based on the Local Search algorithm of~\cite{axiotis2021local}, but also uses
adaptive regularization, which leads to a lot new technical hurdles that are addressed in the
analysis.
This is presented in Theorem~\ref{thm:rank_ARHT_plus} and proved
in Appendix~\ref{sec:proofs_lowrank}.

\begin{reptheorem}{thm:rank_ARHT_plus}[Adaptive Regularization for Low Rank Optimization]
Let $f\in\mathbb{R}^{m\times n}\rightarrow\mathbb{R}$ be a
convex function with condition number $\kappa$ and
consider the low rank minimization
problem
\begin{align}
\underset{\rank(\AA) \leq r}{\min}\, f(\AA)\,.
\label{eq:rank_objective_intro}
\end{align}
For any error parameter $\eps>0$,
there exists a polynomial time algorithm that returns a matrix $\AA$ with
$\rank(\AA) \leq O\left(r\left(\kappa+\log \frac{f(\OO)-f(\AA^*)}{\eps}\right)\right)$
and 
$f(\AA) \leq f(\AA^*) + \eps$, where 
$\OO$ is the all-zero matrix and 
$\AA^*$ is any rank-$r$ matrix.
\end{reptheorem}

\paragraph{Discussion about $\eps$ dependence.} 
Some of the technical issues in the rank case have to do with operator \emph{non-commutativity}
and thus pose no issue in the sparsity case. In particular, the extra 
$\log \frac{f(\OO)-f(\AA^*)}{\eps}$ dependence in the rank comes exactly because of these issues.
However, we think that it should be possible to completely remove this dependence
in the future
by a more careful analysis. 

\paragraph{Discussion about computational efficiency.}
We note that the goal of this section is to show an improved rank bound, and not to argue
about the computational efficiency of such an algorithm.
It might be possible to derive an efficient algorithm
by transforming the proof
in Theorem~\ref{thm:rank_ARHT_plus} into a proof for a matrix IHT algorithm, which might be significantly
more efficient, as it will not require solving linear systems in each iteration.
Still, there are a lot of remaining issues to be tackled, as currently the algorithm requires
computing multiple singular value decompositions
and orthogonal projections in each iteration. Therefore working on
a computationally efficient algorithm that can guarantee a rank of $O(r\kappa)$ is a very 
interesting direction for future research.

\paragraph{Matrix regularizer} Getting back into the main ingredients of Theorem~\ref{thm:rank_ARHT_plus}, we describe the choice
of our regularizer. As we are working over general rectangular matrices, we use \emph{two} regularizers,
one for the left singular vectors and one for the right singular vectors of $\AA$. Concretely,
given two weight matrices  $ \YY, \WW$ such that $ \OO \preceq \YY\preceq \II$, $\OO \preceq \WW\preceq \II$, we define
\begin{align*}
\Phi(\AA) = (\beta/4)\left(\langle \WW, \AA\AA^\top\rangle + \langle \YY,\AA^\top\AA\rangle\right)\,,
\end{align*}
where $\beta$ is a bound on the smoothness of $f$.
The gradient of the regularized function is 
\begin{align*}
\nabla g(\AA) = \nabla f(\AA) + (\beta/2)\left(\WW\AA + \AA\YY\right)\,,
\end{align*}
and the new solution $\oAA$ is defined as
\begin{align*}
\oAA = H_{s'-1}\left(\AA\right) - \eta H_1\left(\nabla g(\AA)\right)\,,
\end{align*}
where we remind that the thresholding operator $H_r:\mathbb{R}^{m\times n}\rightarrow\mathbb{R}^{m\times n}$
that is used in the algorithm returns the top $r$ components of the singular value decomposition of a matrix, i.e. given $\MM = \sum\limits_{i=1}^k \lambda_i\uu_i\vv_i^\top$, where
$\vlambda_1\geq\dots\geq \vlambda_k$ are the singular values and $r\leq k$, $H_r(\MM) = \sum\limits_{i=1}^r \lambda_i \uu_i \vv_i^\top$.
In other words, we drop the bottom rank-1 component of $\AA$ and add the top rank-1 component of the gradient.

After taking a step, we re-optimize over matrices with the current left and right singular space, also known as performing a fully corrective step, as in~\cite{shalev2011large,axiotis2021local}.
To do this, we first compute the SVD
$\UU\vSigma\VV^\top$ of $\oAA$ and then solve the optimization problem $\underset{\AA=\UU\XX\VV^\top}{\min}\, g^t(\AA)$.
For simplicity we assume that this optimization problem can be solved exactly, but the analysis can be modified to account for the case when we have an approximate solution
and we are only given a bound on the norm of the gradient (projected onto the relevant subspace), i.e. $\left\|\vPi_{\im(\UU)} \nabla g^t(\AA) \vPi_{\im(\VV)}\right\|_F$.

Whenever there is not enough progress, we make the following updates on the weight matrices $\WW$ and $\YY$:
\begin{align*}
& \WW' = \WW - \WW \AA\AA^\top\WW / \langle \WW,\AA\AA^\top\rangle\\
& \YY' = \YY - \YY \AA^\top\AA\YY / \langle \YY,\AA^\top\AA\rangle\,.
\end{align*}

The full algorithm is in Algorithm~\ref{alg:regularized_local}. 
In the algorithm description we assume that $f(\AA^*)$ is known. This assumption can be
removed by performing binary search over this value, as in~\cite{axiotis2021sparse}. 

\begin{algorithm}%
   \caption{Regularized Local Search}
   \label{alg:regularized_local}
\begin{algorithmic}
\STATE $\AA^0$: initial rank-$r'$ solution
\STATE $\WW^0 = \YY^0 =\II$: initial regularization weights
\STATE $\eta$: step size, $T$: \#iterations
\STATE $c$: weight step size
\FOR{$t=0,\dots, T-1$}
	\STATE $\Phi(\AA) := (\beta/4) \left(\left\langle\WW^t, \AA\AA^\top\right\rangle + \left\langle\YY^t,\AA^\top \AA\right\rangle\right)$
	\STATE $g(\AA) := f(\AA) + \Phi(\AA)$
	\STATE $\oAA = H_{s'-1}\left(\AA^t\right) - 0.5 H_1\left(\eta\nabla g(\AA^t)\right)$
	\STATE $\PP = (\WW^t)^{1/2} \AA^t (\AA^t)^\top (\WW^{t})^{1/2}$
	\STATE $\QQ = (\YY^t)^{1/2} (\AA^t)^\top \AA^t (\YY^t)^{1/2}$
	\STATE $\Delta = g(\AA^t) - f(\AA^*)$
	\STATE 
	\IF {$g(\AA^t) - g(\oAA) \geq (r')^{-1} \Delta$}
		\STATE Let $\UU\vSigma \VV^\top$ be the SVD of $\oAA$
		\STATE $\AA^{t+1} = \underset{\AA=\UU\XX\VV^\top}{\mathrm{argmin}}\, g(\AA)$
		\STATE $\WW^{t+1}, \YY^{t+1} = \WW^t, \YY^t$
	\STATE
	\ELSIF {
		\ifdefined\arxiv
		$\max\left\{\mathrm{Tr}\left[H_{r}\left(\PP\right)\right], \mathrm{Tr}\left[H_{r}\left(\QQ\right)\right]\right\} \geq (0.4/\beta) \Delta $
		\else
		\[ 
		\max\left\{\mathrm{Tr}\left[H_{r}\left(\PP\right)\right], \mathrm{Tr}\left[H_{r}\left(\QQ\right)\right]\right\} \geq (0.4/\beta) \Delta 
		\]
		\fi
	}
		\STATE $\AA^{t+1} = \AA^t$
		\STATE $\WW^{t+1} = (\WW^t)^{1/2}\left(\II - r^{-1}\vPi_{\im(\PP)}\right)(\WW^t)^{1/2}$
		\STATE $\YY^{t+1} = (\YY^t)^{1/2}\left(\II - r^{-1}\vPi_{\im(\QQ)}\right)(\YY^t)^{1/2}$
	\STATE
	\ELSE
		\STATE $\AA^{t+1} = \AA^t$
		\STATE $\WW^{t+1} = \WW^t - \frac{\WW^t \AA^t(\AA^t)^\top\WW^t}{\langle \WW^t,\AA^t(\AA^t)^\top\rangle}$
		\STATE $\YY^{t+1} = \YY^t - \frac{\YY^t (\AA^t)^\top\AA^t\YY^t}{\langle \YY^t,(\AA^t)^\top\AA^t\rangle}$
	\ENDIF
\ENDFOR
\end{algorithmic}
\end{algorithm}

\FloatBarrier
\section{Experiments}

\paragraph{Introduction.} In this section we present numerical experiments in order to compare the performance of IHT and regularized IHT (Algorithm~\ref{alg:regularized_iht}) in training sparse linear models.
In particular, we will look at the tasks of linear regression and logistic regression using both real and synthetic data.
In the former, we are given a matrix $\AA\in\mathbb{R}^{m\times n}$, where each row represents an example and each column a feature, and a vector $\bb\in\mathbb{R}^m$ that represents the ground truth
outputs, and our objective is to minimize the $\ell_2$ loss
\begin{align*}
f(\xx) = (1/2) \left\|\AA\xx-\bb\right\|_2^2\,.
\end{align*}
In logistic regression, $\bb$ has binary instead of real entries, and our objective is to minimize the logistic loss
\begin{align*}
f(\xx) = -\sum\limits_{i=1}^m \left(b_i\log\sigma(\AA\xx)_i + (1-b_i) \log (1-\sigma(\AA\xx)_i)\right)\,,
\end{align*}
where $\sigma(z) = \left(1 + e^{-z}\right)^{-1}$ is the sigmoid function. As is common, we look at the \emph{regularized} logistic regression objective:
\begin{align*}
f(\xx) = -\sum\limits_{i=1}^m \left(b_i\log\sigma(\AA\xx)_i + (1-b_i) \log (1-\sigma(\AA\xx)_i)\right) + (\rho/2) \left\|\xx\right\|_2^2\,,
\end{align*}
for some $\rho > 0$. For our experiments we use $\rho=0.1$.

\paragraph{Preprocessing and choice of parameters.}

The only preprocessing we perform is to center the columns of $\AA$, i.e. we subtract the mean of each column from each entry of the column,
and then scale the columns to unit $\ell_2$ norm. 
This ensures that for any sparsity parameter $s'\in[n]$, the function $f$ is $s'$-smooth when restricted to $s'$-sparse directions, or in other words the 
$s'$-restricted smoothness constant of $f$ is at most $s'$.
Thus we set our smoothness estimate to $\beta := s'$. Our smoothness estimate $\beta$ influences the (regularized) IHT algorithm in two ways. First, as the step size of the algorithm
is given by $1/\beta$, a value of $\beta$ that is too large can slow down the algorithm, or even get it stuck to a local minimum. Second, the strength of the 
regularization term in regularized IHT should be close to the $(s+s')$-restricted smoothness constant, as shown in the analysis of Theorem~\ref{thm:regularized_iht}.

Even though having a perfectly accurate estimate of the smoothness constant is not necessary, a more accurate estimate improves the performance of the algorithm. In fact,
the estimate $1/s'$ for the step size is generally too conservative. When used in practice, one should either tune this parameter or use a variable/adaptive step size to achieve the best results.

For the weight step size of regularized IHT, we set the weight step size to $c=s'/T$, but we also experiment with how changing $c$ affects the performance of the algorithm.
The downside of this setting is that it requires knowing the number of iterations a priori. However, in practice one could tune $c$ and then run the algorithm for $O(s'/c)$ iterations.
Note that ideally, based on the theoretical analysis, $T$ would be proportional to the restricted condition number of $f$, however this quantity is hard to compute in general. Another idea to
avoid this in practice could be to let $c$ be a variable step size.

\paragraph{Implementation.} Both the IHT and regularized IHT algorithms are incredibly simple, and can
be described in a few lines of python code, as can be seen in Figure~\ref{fig:python}.
Note that in comparison to Algorithm~\ref{alg:regularized_iht} we do not perform the conditional
assignment. All the experiments were run on a single 2.6GHz Intel Core i7 core of a
2019 MacBook Pro with 16GB DDR4 RAM using Python 3.9.10.

\begin{figure}
\begin{python}
import numpy as np

def IHT(n, s):
	x = np.zeros(n)
	for _ in range(T):
		x_new = x - eta * grad(x)
		x_new[np.argsort(np.abs(x_new))[:-s]] = 0
		x = x_new
	return x

def RegIHT(n, s):
	x, w = np.zeros(n), np.ones(n)
	for _ in range(T):
        x_new = (1 - 0.5 * w) * x - 0.5 * eta * grad(x)
        x_new[np.argsort(np.abs(x_new))[:-s]] = 0
        reg = np.sum(w * x**2)
        if reg != 0:
            w = w * (1 - c * w * x**2 / reg)
            w[w <= round_th] = 0
        x = x_new
	return x
\end{python}
\caption{Our python implementations of IHT, RegIHT, where grad is the gradient function,
n is the total number of features, s is the desired sparsity level,
eta is the step size, c is the weight step size, and round\_th is the weight rounding threshold,
which we set to $0.5$. Note that 
grad(x) = np.dot(A.T, np.dot(A, x) - b) for linear regression
and
grad(x) = np.dot(A.T, expit(np.dot(A,x)) - b) for logistic regression,
where expit is the sigmoid function.
}
\label{fig:python}
\end{figure}

\subsection{Real data}

\begin{figure}
\vskip 0.2in
\begin{center}
	\includegraphics[width=\ifdefined\arxiv 0.49\fi\columnwidth]{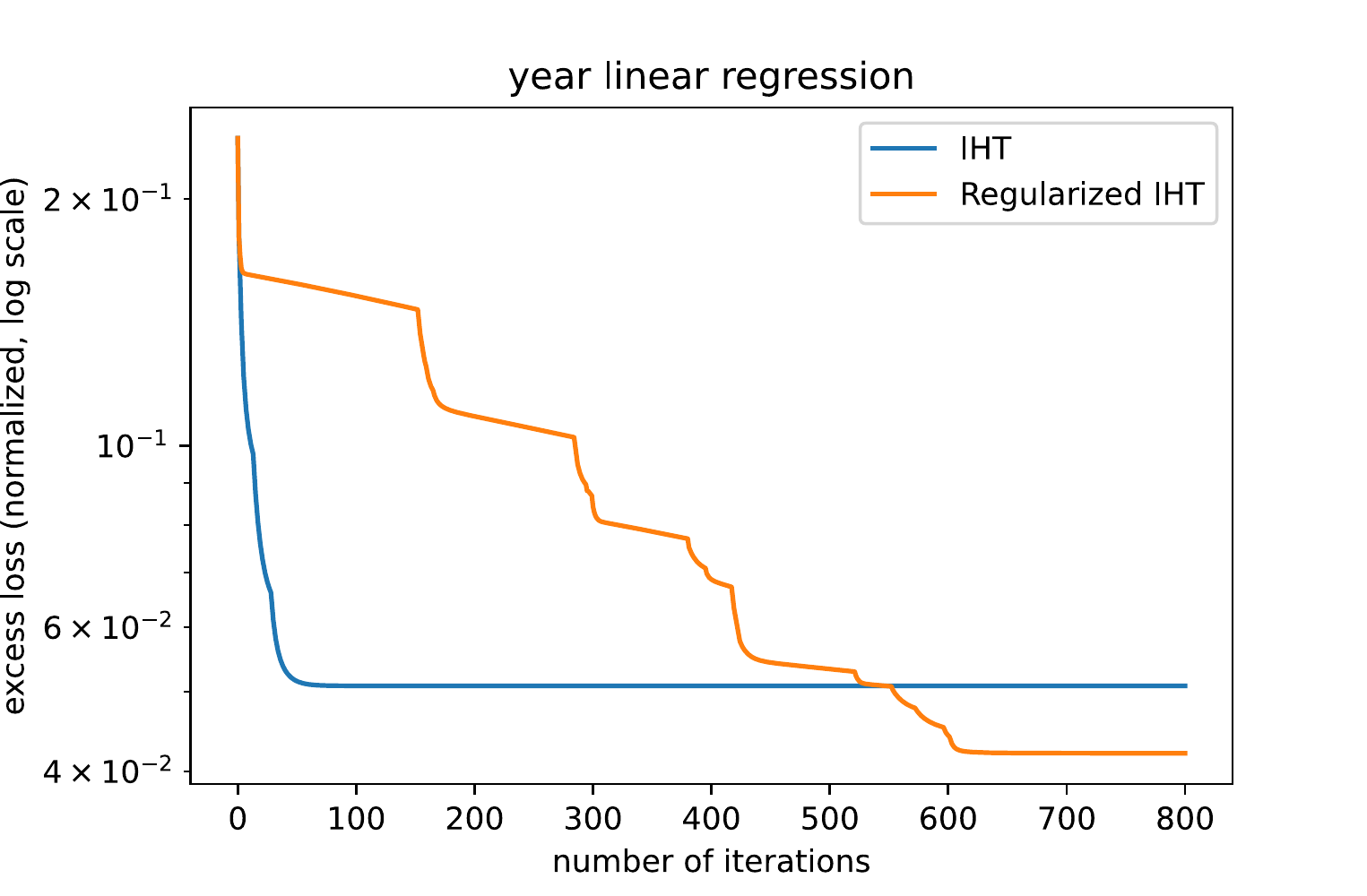} 
	\includegraphics[width=\ifdefined\arxiv 0.49\fi\columnwidth]{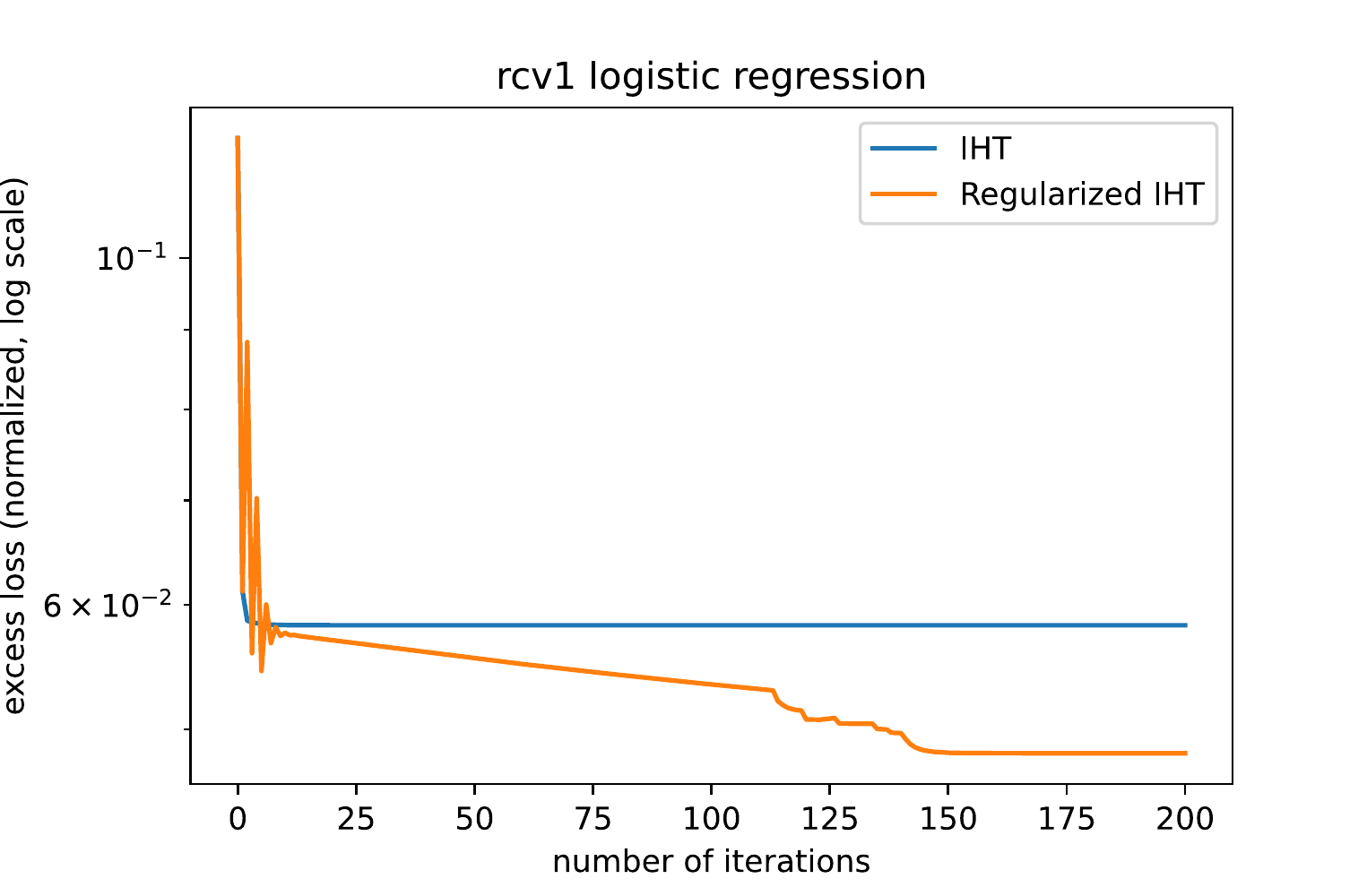} 
\end{center}
\caption{IHT vs Regularized IHT performance on the year and rcv1 datasets with fixed sparsity levels
$s=11$ and $s=10$ respectively.
On the $x$ axis we have number of iterations and on the $y$ axis we have the normalized excess loss (compared to the dense global optimum), 
in log scale. The excess loss of regularized IHT is less than that of IHT, specifically 17.3\% and 17.2\% respectively less in the two experiments.
We can see that, initially, regularized IHT has a much higher error than IHT, but after some iterations it finds a solution with lower error than the one found by IHT.
This phenomenon of regularized IHT having decreased performance in the first part of the algorithm is to be expected, because the algorithm runs on a regularized function, and so tries to keep
not just $f(\xx)$ but also $\left\|\xx\right\|_2^2$ small. After some iterations of this, however, the situation changes. The algorithm has learnt regularization weights that 
are closer to the optimal ones, and can thus converge to sparser
solutions than IHT (equivalently, lower error solutions with the same sparsity, which is what is shown in this plot). 
}
\label{fig12}
\end{figure}

We first experiment with real data, specifically the \emph{year} regression dataset 
from UCI~\cite{Dua2019} and the \emph{rcv1} binary classification dataset~\cite{lewis2004rcv1}.
In Figure~\ref{fig12} we have a comparison between the error of the solution returned by IHT 
and regularized IHT for a fixed sparsity level. Specifically, if we let $\xx^{**}$ be the (dense) global minimizer of $f$,
we plot the logarithm of the (normalized) \emph{excess loss} $(f(\xx) - f(\xx^{**})) / f(\zerov)$ against the number of iterations.
Note that $f(\xx^{**})$ will typically be considerably lower than the loss of the sparse optimum $f(\xx^*)$.
In order to make a fair comparison, for each algorithm we pick the best fixed step size of the form $2^i/s$ for integer $i\geq 0$, where $s$ is the fixed sparsity level. The best
step sizes of IHT and regularized IHT end up being $2/s, 4/s$ respectively for the linear regression example,
and $8/s, 16/s$ respectively for the logistic regression example.

\begin{figure}
\vskip 0.2in
\begin{center}
	\includegraphics[width=\ifdefined\arxiv 0.49\fi\columnwidth]{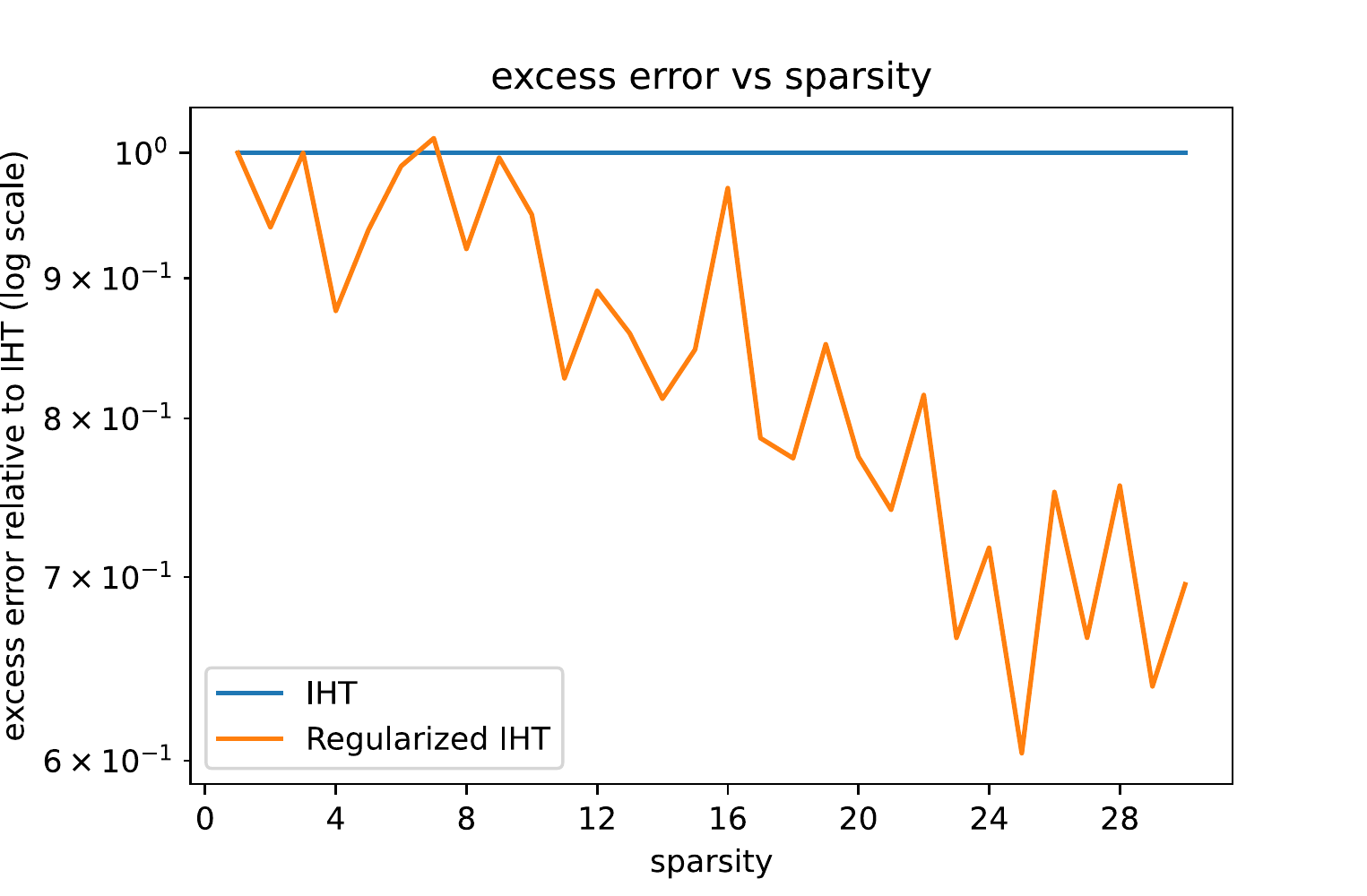} 
	\includegraphics[width=\ifdefined\arxiv 0.49\fi\columnwidth]{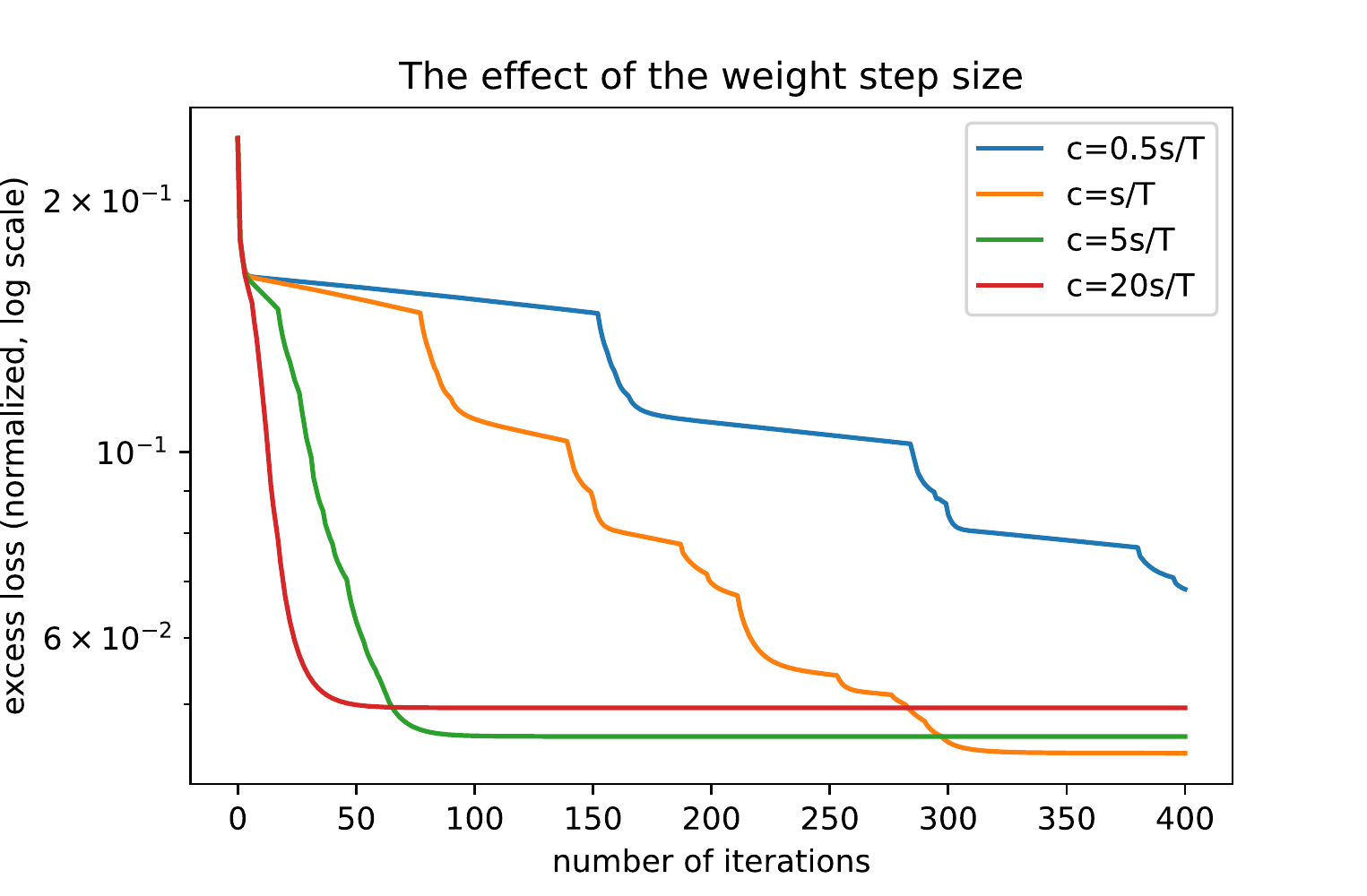} 
\end{center}
\caption{
\emph{Left:} Excess error of regularized IHT relative to IHT in the \emph{year} dataset, where sparsity values range 
from $1$ to $30$. Both algorithms are run for $T=800$ iterations.\\
	\emph{Right:} Error rate vs number of iterations of regularized IHT on the \emph{year} dataset with fixed sparsity $s=11$ and step size $\eta=4/s$,
using different values for the weight step size $c$. Here we can see an interesting tradeoff between the number of iterations
and the error of the solution that is eventually returned. In particular, the \emph{larger} $c$ is, the \emph{faster} the degradation of regularization weights. Thus, for $c\rightarrow\infty$,
the algorithm tends to be the same as IHT. On the other hand, with \emph{smaller} values of $c$, one can get an improved error rate, but at the cost of a larger number of iterations. This is
because the regularization weights decrease slowly, and so in the early 
iterations of the algorithm (i.e. until the proper weights are learned), 
the regularization term will account for a significant fraction of the 
objective function value.
}
\label{fig4}
\end{figure}
In Figure~\ref{fig4} left we compare IHT and regularized IHT for different sparsity levels on the year dataset.
If $e_1$ and $e_2$ are the excess errors of IHT and regularized IHT respectively, we plot $e_2 / e_1$, which is the relative excess error
of regularized IHT with respect to that of IHT. We notice a reduction of up to $40\%$ on the excess error.
In Figure~\ref{fig4} right we examine the effect of the choice of the weight step size $c$.
We conclude that $c$ can give a tradeoff between runtime and accuracy, as setting it to a large value
will lead to faster weight decay and thus resemble IHT, while a small value of $c$ will lead to slow
weight decrease, which will lead to more iterations but also potentially recover an improved solution.

\subsection{Synthetic data}
We now turn to synthetically generated linear regression instances. The first 
result presented in Figure~\ref{fig13}
is the hard 
IHT instance that we derived in our lower bound in Appendix~\ref{sec:lower_bounds}. This experiment
shows that there exist examples where, with bad initialization, IHT cannot decrease the objective 
at all (i.e. is stuck at a local minimum), while regularized IHT with the same initialization manages
to reduce the loss by more than 70\%.

The second result is a result in the well known setting of sparse signal recovery from 
linear measurements.
We generate a matrix $\AA$ with  entries that are sampled i.i.d. from the standard normal distribution, an $s$-sparse signal $\xx$ again with
  entries sampled i.i.d. from the standard normal distribution, and an observed vector $\bb := \AA \xx$. The goal is to recover $\xx$ by
minimizing the objective 
\[ f(\xx) = (1/2) \left\|\AA\xx-\bb\right\|_2^2 \,.\]
In Figure~\ref{fig13}, we plot the normalized value of this objective, after running both IHT and
regularized IHT for the same number of iterations. 
Here we pick the best step size per instance, 
starting from $\eta=1/s$ and increasing in multiples of $1.2$. Also, for each fixed value of 
$s$ and algorithm, we run the experiments $20$ times in order to account for the variance.
The results show a superiority in the performance of regularized IHT for the sparse signal recovery
task.
\begin{figure}
\vskip 0.2in
\begin{center}
	\includegraphics[width=\ifdefined\arxiv 0.49\fi\columnwidth]{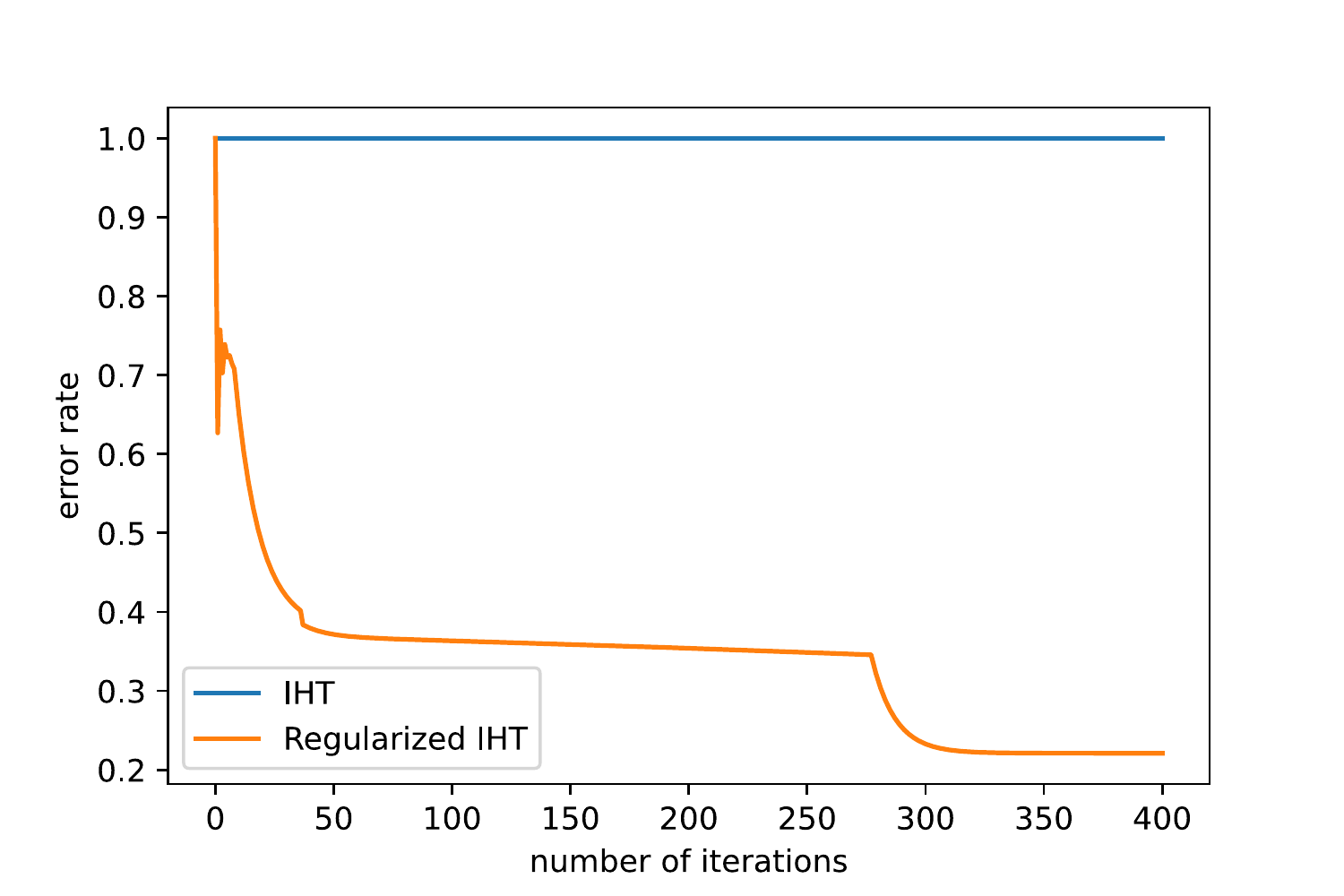}
	\includegraphics[width=\ifdefined\arxiv 0.49\fi\columnwidth]{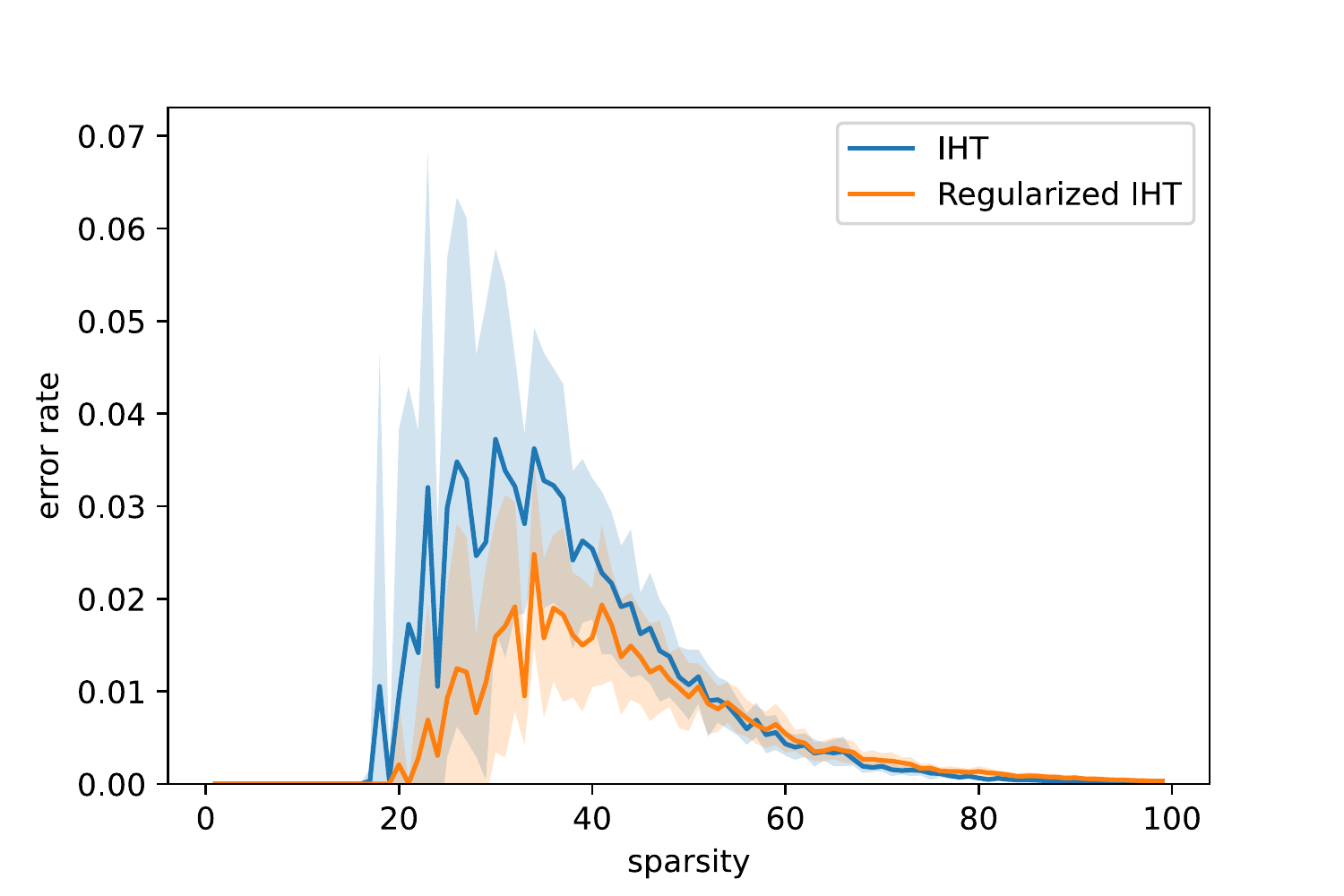} 
\end{center}
\caption{
\emph{Left:} A demonstration of a ~80\% decrease in loss by using regularized IHT instead of IHT on the hard instance for IHT presented in Section~\ref{sec:lower_bounds}.
We have generated the data with a condition number of $\kappa=20$, and a planted sparse solution with sparsity $s=2$. The dimension is $n=842$.
It can be observed that, for the given initialization vector, IHT never makes any progress on decreasing the error. In contrast, regularized IHT is able to decrease it by almost a factor of $5$.
\emph{Right:} Sparse signal recovery, where $\AA$ is an $100\times 800$ measurement matrix,
the sparsity level ranges from $1$ to $100$, and each algorithm is run for $240$ iterations. Bands of
1 standard error are shown, after running each data point $20$ times independently.
}
\label{fig13}
\end{figure}

\FloatBarrier

\bibliography{main}
\bibliographystyle{alpha}
\ifdefined\arxiv
\else
\bibliographystyle{icml2022}
\fi

\newpage
\appendix
\section{Proof of Theorem~\ref{thm:regularized_iht}}
\label{sec:proof_regularized_iht}
\begin{reptheorem}{thm:regularized_iht}[Regularized IHT]
Let $f\in\mathbb{R}^{n}\rightarrow\mathbb{R}$ be a convex function that is $\beta$-smooth
and $\alpha$-strongly convex, with
condition number $\kappa=\beta/\alpha$, and $\xx^*$
be an (unknown) $s$-sparse solution with support $S^*$. Then,
running 
Algorithm~\ref{alg:regularized_iht} with $\eta = (2\beta)^{-1}$ and $c=s'/(4T)$
for 
\[ T = O\left(\kappa \log \frac{f(\xx^0) + (\beta/2)\left\|\xx^0\right\|_2^2 - f(\xx^*)}{\eps}\right) \]
iterations starting from an arbitrary $s'=O(s\kappa)$-sparse solution $\xx^0$, 
the algorithm returns an $s'$-sparse solution $\xx^T$ such that
$f(\xx^T) \leq f(\xx^*) + \eps$.
Furthermore, each iteration requires
$O(1)$ evaluations of $f$, $\nabla f$, and $O(n)$ additional time.
\end{reptheorem}
\begin{proof}
We repeatedly apply Lemma~\ref{lem:one_step} for 
\[ T=64(\kappa+1)\log\frac{f(\xx^0) + (\beta/2)\left\|\xx^0\right\|_2^2 - f(\xx^*)}{\eps}\]
iterations.
We define the regularized function 
\[ g^t(\xx) := f(\xx) + (\beta/2)\left\|\xx\right\|_{\ww^t,2}^2 \,, \]
where $\ww^t$ are the weights before iteration $t\in[0,T-1]$.
Specifically, for each $t$ we apply Lemma~\ref{lem:one_step}
on the current solution $\xx^{t}$ and obtain the solution $\xx^{t+1}$.

Before moving forward, we give an intuitive summary of the proof and the role of Lemma~\ref{lem:one_step}.
As long as IHT makes ``sufficient'' progress on the regularized function $g^t$, this is satisfactory for 
the original function $f$ as well, because $f(\xx) \leq g^t(\xx)$ for all $\xx$.
This is the case of the first bullet of Lemma~\ref{lem:one_step}. If it stops making sufficient progress,
this means we are at an (approximate) sparse optimum for $g^t$,
although it is not necessarily a good sparse solution for $f$, which is the objective we are aiming to minimize.
This is where the second bullet of Lemma~\ref{lem:one_step} comes in, which gives necessary conditions
for the above non-progress phenomenon (in other words, a partial characterization of the local minima
encountered when running IHT on a regularized function).
Specifically, the following condition is central to our approach:
\[ \left\|\xx_{S^*}^t\right\|_{(\ww^t)^2,2}^2 \geq (4\kappa+6)^{-1} \left\|\xx^t\right\|_{\ww^t,2}^2 \,.\]
We use this condition in the second part of the 
proof (after Case 2) to motivate a weight update from $\ww^t$ to $\ww^{t+1}$, and show
that, exactly because of this condition, a lot of the weight decrease is concentrated inside the 
optimal support $S^*$. As the total weight decrease in $S^*$ is bounded by $s$, this gives a bound
on the total number of iterations with insufficient decrease of $g^t$.
If not for this condition, we would not be able to bound the number of
such iterations and would have potentially remained forever stuck at a local minimum.

Now we are ready to move to the technical proof.
In order to make sure that $g^{t+1}(\xx^{t+1}) \leq g^{t+1}(\xx^{t})$, we revert
to the previous solution if the one returned by Lemma~\ref{lem:one_step} has a larger value of $g^{t+1}$. This is
exactly what the conditional in Algorithm~\ref{alg:regularized_iht} is for. The property that 
$g^{t+1}(\xx^{t+1}) \leq g^{t+1}(\xx^{t})$ is only used in the very last part of the proof.

Let us assume that $g^{t}(\xx^{t}) > f(\xx^*)$ at all times, as otherwise the statement holds by the fact that $g^{t}(\xx^{t})$
is non-increasing as a function of $t$ and upper bounds $f(\xx^t)$ for all $t$. We have $g^{t+1}(\xx^{t+1}) \leq g^t(\xx^{t})$
by the fact that $\ww^{t+1} \leq \ww^{t}$ and $g^t(\xx^{t+1}) \leq g^{t}(\xx^t)$ by the guarantees of Lemma~\ref{lem:one_step}.

If the first bullet of Lemma~\ref{lem:one_step} holds, we have that the value of 
$g$ decreases considerably on iteration $t$, i.e.
\begin{align*}
& g^{t+1}(\xx^{t+1}) \\
& \leq g^{t}(\xx^{t+1})\\
& \leq g^{t}(\xx^{t}) - (16\kappa)^{-1} (g^{t}(\xx^{t}) - f(\xx^*))\,.
\end{align*}
Let us call these iterations \emph{progress iterations}, and the other ones (where the second bullet of Lemma~\ref{lem:one_step} holds) \emph{weight iterations}.
Now, since $g^t(\xx^t)$ is non-increasing as a function of $t$,
after $16\kappa \log \frac{g^0(\xx^0) - f(\xx^*)}{\eps}$ progress iterations 
we will have 
\[ f(\xx^t) \leq g^t(\xx^t) \leq f(\xx^*) + \eps \,, \]
and so we will be done. From now on let us assume this is not the case, so there are at least
\[ T - 16\kappa \log \frac{g^0(\xx^0) - f(\xx^*)}{\eps} \geq 3T/4\]
weight iterations.

We remind that in each weight iteration, we have
\begin{align}
& \left\|\xx_{S^*}^t\right\|_{(\ww^t)^2,2}^2
\geq 
(4\kappa + 6)^{-1} \left\|\xx^t\right\|_{\ww^{t},2}^2
\label{eq:intersection_vs_all}\\
& 
(\beta/2)\left\|\xx_{S^*}^t\right\|_{(\ww^{t})^2,2}^2
\geq (8\kappa+8)^{-1} \left(g^{t}(\xx^t) - f(\xx^*)\right)
\label{eq:intersection_vs_function}
\,.
\end{align}
In words, (\ref{eq:intersection_vs_all}) roughly implies
that at least an $\Omega(1/\kappa)$ fraction of the
mass of $(\ww^{t}\xx^t)^2$ lies inside $S^*$.
Therefore, if we decrease $\ww^{t}$ by a quantity proportional to $(\ww^{t} \xx^t)^2$, the total sum of weights
will decrease at most $O(\kappa)$ times faster than the sum of weights inside $S^*$. As the latter
quantity can only decrease by $s$ overall, the total decrease of weights will be $O(s \kappa)$.

Concretely, after each iteration we update the 
regularization weights as follows:
\begin{align*}
\ww^{t+1} = \left(\ww^{t} - c\cdot (\ww^{t} \xx^t)^2 / \left\|\xx^t\right\|_{\ww^{t},2}^2\right)_{\geq 1/2}\,,
\end{align*}
for some $c > 0$ to be determined later.
First of all, note that the weights are non-increasing. Now, if not for the thresholding operation, it is easy to see
that the total weight decrease is at most $c$. The thresholding operation can only double this weight decrease to $2c$. Concretely, for all $t$ we define a vector $\oww^t$ such that
\begin{align*}
\ow_i^t = \begin{cases}
w_i^t & \text{if $w_i^t \geq 1/2$}\\
1/2 & \text{if $w_i^t = 0$}\,.
\end{cases}
\end{align*}
Clearly, $\frac{1}{2} \left(\onev - \ww^t\right) \leq \onev - \oww^t \leq \onev - \ww^t$. Now, we have
\begin{align*}
&  \left\|\oww^{t}\right\|_1-\left\|\oww^{t+1}\right\|_1\\
&  \leq  c \left\|\xx^t\right\|_{(\ww^{t})^2,2}^2 / \left\|\xx^t\right\|_{\ww^{t},2}^2\\
&  \leq  c\,,
\end{align*}
and, summing up for all $t$ we get 
\[ \left\|\onev - \oww^T\right\|_1 = \left\|\oww^0\right\|_1 - \left\|\oww^T\right\|_1 \leq cT \,. \]
Therefore, 
\begin{align*}
\left\|\onev - \ww^T\right\|_1
\leq 2 \left\|\onev - \oww^T\right\|_1
\leq 2cT\,, 
\end{align*}
and so $\left\|\ww^T\right\|_1 \geq n - 2cT$.

Therefore, the condition $\left\|\ww^t\right\|_1 \geq n - s'/2$ of Lemma~\ref{lem:one_step} is satisfied
for all $t$ as long as $c \leq s'/(4T)$.
In order to bound the number of iterations, we distinguish two cases for the sum of weights inside $S^*$.
\paragraph{Case 1:} The sum of weights inside $S^*$ decreases by $\geq 4 s / T$.

This case cannot happen more than $T/4$
times since the sum of weights
inside $S^*$ can only decrease by $s$ in total. Therefore, case 2 below happens at least $T/2$ times.

\paragraph{Case 2:} The sum of weights inside $S^*$ decreases by $<4s/T$.

Note that the decrease in the sum of weights in $S^*$ is exactly equal to
\begin{align*}
\sum\limits_{i\in S^*} \begin{cases}
 c\cdot (w_i^{t} x_i^t)^2 / \left\|\xx^t\right\|_{\ww^{t},2}^2 & \text{if this is $\leq w_i^t - 1/2$}\\
 w_i^t & \text{otherwise}\,.
\end{cases}
\end{align*}
Let $T^*$ be the set of indices $i\in S^*$ for which the second case is true, i.e.
\[ c\cdot (w_i^{t} x_i^t)^2 / \left\|\xx^t\right\|_{\ww^{t},2}^2 >  w_i^t - 1/2\,.\]
The total weight decrease from elements in $S^*\backslash T^*$ is then 
\begin{align*}
& \sum\limits_{i\in S^*\backslash T^*}
c\cdot (w_i^{t} x_i^t)^2 / \left\|\xx^t\right\|_{\ww^{t},2}^2\\
& = c \left\|\xx_{S^*\backslash T^*}^t\right\|_{(\ww^{t})^2,2}^2 / \left\|\xx^t\right\|_{\ww^{t},2}^2\\
& \geq 
\frac{c}{4\kappa + 6} \left\|\xx_{S^*\backslash T^*}^t\right\|_{(\ww^{t})^2,2}^2 / 
\left\|\xx_{S^*}^t\right\|_{(\ww^{t})^2,2}^2\,,
\end{align*}
where we used (\ref{eq:intersection_vs_all}). 
As we have assumed that this decrease is less than
$4s/T$, we have that 
\begin{equation}
\begin{aligned}
\left\|\xx_{T^*}^t\right\|_{(\ww^{t})^2,2}^2 
& =
\left\|\xx_{S^*}^t\right\|_{(\ww^{t})^2,2}^2 
- \left\|\xx_{S^*\backslash T^*}^t\right\|_{(\ww^{t})^2,2}^2 
\\
& \geq \left(1 - \frac{4s(4\kappa+6)}{cT}\right) \left\|\xx_{S^*}^t\right\|_{(\ww^{t})^2,2}^2 \\
& \geq (1/2) \left\|\xx_{S^*}^t\right\|_{(\ww^{t})^2,2}^2 \,,
\end{aligned}
\label{eq:T_star_large}
\end{equation}
as long as $c \geq 8s(4\kappa+6) / T$. We can pick such a $c$ as long as 
\begin{align*}
8s(4\kappa+6) / T \leq c \leq s'/(4T) \Leftrightarrow s' \geq 32(4\kappa+6) s\,.
\end{align*}
Now, to deal with the fact that the sum weights in
$T^*$ might not decrease sufficiently,
note that all the weights in $T^*$ are being set to $0$, i.e. $w_i^{t+1} = 0$ for all $i\in T^*$.
Together with (\ref{eq:T_star_large}) and (\ref{eq:intersection_vs_function})
this means that we can make significant progress in function value. To see this, note that
\begin{align*}
& g^{t+1}(\xx^{t+1})\\
& \leq g^{t+1}(\xx^t) \\
& \leq g^{t}(\xx^{t}) - (\beta/2) \left\|\xx_{T^*}^{t}\right\|_{\ww^{t},2}^2\\
& \leq g^{t}(\xx^{t}) - (\beta/2) \left\|\xx_{T^*}^t\right\|_{(\ww^{t})^2,2}^2\\
& \leq g^{t}(\xx^{t}) - (\beta/4) \left\|\xx_{S^*}^t\right\|_{(\ww^{t})^2,2}^2\\
& \leq g^{t}(\xx^{t}) - (16\kappa+16)^{-1} \left(g^{t}(\xx^t) - f(\xx^*)\right)\,,
\end{align*}
which can happen at most 
\begin{align*}
16(\kappa+1) \log \frac{g^0(\xx^0) - f(\xx^*)}{\eps} \leq T / 4
\end{align*}
times.

\ifx 0 
For the sum of weights inside $S^*$ we have 
\begin{align*}
& \left\|\ww_{S^*}^t\right\|_1 - \left\|\ww_{S^*}^{t-1}\right\|_1\\
& = - c \left\|\xx_{S^*}^t\right\|_{\ww^{t-1},2}^2 / \left\|\xx^t\right\|_{\ww^{t-1},2}^2\\
& \leq - c / (8\kappa + 1)\,.
\end{align*}

Overall, we have 
\begin{align*}
-s^*
& \leq \left\|\ww_{S^*}^T\right\|_1 - \left\|\ww_{S^*}^{0}\right\|_1\\
& = \sum\limits_{t=1}^T \left(\left\|\ww_{S^*}^t\right\|_1 - \left\|\ww_{S^*}^{t-1}\right\|_1\right)\\
& \leq \sum\limits_{t\in I_2} \left(\left\|\ww_{S^*}^t\right\|_1 - \left\|\ww_{S^*}^{t-1}\right\|_1\right)\\
& \leq -|I_2| \cdot c / (8\kappa + 1)\,,
\end{align*}
implying that 
\[ |I_2| \leq (8\kappa + 1) s^* / c \leq 4(8\kappa + 1) T s^* / s \leq T / 2 \,,\]
as long as $s \geq 36 \kappa s^*$. So we are done.
\fi

\end{proof}

\section{Proof of Lemma~\ref{lem:one_step}}
\label{sec:proof_lem_one_step}

\begin{replemma}{lem:one_step}[Regularized IHT step progress]
\it
Let $f\in\mathbb{R}^{n}\rightarrow\mathbb{R}$ be a convex 
function that is $\beta$-smooth and $\alpha$-strongly convex, 
$\kappa=\beta/\alpha$ be its condition number, and $\xx^*$ be any $s$-sparse
solution.

Given any $s'$-sparse solution $\xx\in\mathbb{R}^n$ where
\begin{align*}
s' \geq (128\kappa+2)s
\end{align*}
and a weight vector $\ww\in\left(\{0\}\cup[1/2,1]\right)^n$ 
such that $\left\|\ww\right\|_1 \geq n - s'/2$, we make the following update:
\[ \xx' = H_{s'}\left((\onev - 0.5 \ww) \xx - (2\beta)^{-1} \nabla f(\xx)\right) \,. \]
Then, at least one of the following two conditions holds:
\begin{itemize}
\item {
Updating $\xx$ makes regularized progress:
\[ g(\xx') \leq g(\xx) - (16\kappa)^{-1} (g(\xx) - f(\xx^*))\,,\]
where 
\[ g(\xx) := f(\xx) + (\beta/2) \left\|\xx\right\|_{\ww,2}^2\]
is the $\ell_2$-regularized version of $f$ with weights given by $\ww$.
Note: The regularized progress statement is true as long as $\xx$ is suboptimal, i.e. $g(\xx) > f(\xx^*)$. Otherwise, we just
have $g(\xx') \leq g(\xx)$.
}
\item {$\xx$ is significantly correlated to the optimal support $S^* := \mathrm{supp}(\xx^*)$:
\[ \left\|\xx_{S^*}\right\|_{\ww^2,2}^2 \geq (4\kappa+6)^{-1} \left\|\xx\right\|_{\ww,2}^2 \,,\]
and
the regularization term restricted to $S^*$ is non-negligible:
\begin{align*}
(\beta/2)\left\|\xx_{S^*}\right\|_{\ww^2,2}^2
\geq (8\kappa+8)^{-1} \left(g(\xx) - f(\xx^*)\right)\,.
\end{align*}}
\end{itemize}
\end{replemma}
\begin{proof}
By using the fact that $f$ is $\beta$-smooth,
and so $g$ is $2\beta$-smooth due to $\ww\leq \onev$,
for any $\xx'$ we obtain
\begin{align}
 g(\xx') - g(\xx)
 \leq \left\langle \nabla g(\xx), \xx'-\xx\right\rangle
+ \beta \left\|\xx' - \xx\right\|_{2}^2\,.
\label{eq:g_smoothness}
\end{align}

\ifx 0
\paragraph{Case 1:} We look at when the first bullet from the lemma statement is satisfied. We have
\begin{equation}
\begin{aligned}
& g(\xx') - g(\xx)\\
& \leq \left\langle \nabla g(\xx), \oxx_S - \xx\right\rangle
+ \beta \left\|\oxx_S - \xx\right\|_{2}^2\\
& = \left\langle \nabla g(\xx), - \eta \nabla_S g(\xx)\right\rangle
+ \beta \left\|\eta \nabla_S g(\xx)\right\|_{2}^2\\
& = -(\eta - \beta\eta^2)\left\|\nabla_S g(\xx)\right\|_2^2\\
& = -(4\beta)^{-1}\left\|\nabla_S g(\xx)\right\|_2^2\,,
\end{aligned}
\label{eq:smoothness_ineq}
\end{equation}
where the last equation follows by our setting of $\eta = 1/(2\beta)$.
Let us from now on
assume that the first bullet of the statement is not satisfied, so
\begin{align*}
g(\oxx_S) - g(\xx) > - (8\kappa)^{-1} \left(g(\xx) - f(\xx^*)\right)\,.
\end{align*}
Combined with (\ref{eq:smoothness_ineq}), this implies:
\begin{align}
\left\|\nabla_S g(\xx)\right\|_2^2 \leq (\alpha/2) (g(\xx) - f(\xx^*))\,.
\label{eq:gradient_small}
\end{align}

\paragraph{Second bullet} Similarly, we examine what happens when the second bullet fails to hold.
\fi

We let $S$ be the support of $\xx$ and $S'$ the support of $\xx'$, i.e. the set of $s'$ indices of
the largest magnitude entries of the vector. Since $\nabla g(\xx)=\nabla f(\xx)+\beta \ww \xx$, we have
\begin{align*}
\oxx = 
\xx - \eta \nabla g(\xx)
= (\onev-0.5 \ww)\xx - \eta \nabla f(\xx)\,,
\end{align*}
where $\eta = (2\beta)^{-1}$.
We let $A = S'\backslash S$ be the newly inserted entries
and $B = S\backslash S'$ be the entries that were just removed from the support.
Note that 
\begin{align*}
\xx' 
& =  \left[\xx - \eta \nabla g(\xx)\right]_{S'}\\
& = \xx - \eta \nabla_{S'} g(\xx) - \xx_B \\
& = \xx - \eta \nabla_{S'\cup B} g(\xx) - \oxx_B \,.
\end{align*}
Using (\ref{eq:g_smoothness}), we have 
\begin{equation}
\begin{aligned}
& g(\xx') - g(\xx)\\
&  \leq \left\langle \nabla g(\xx), -\eta \nabla_{S'\cup B} g(\xx) - \oxx_B\right\rangle
{\ifdefined\arxiv
\else
\\ &
\fi}
+ \beta \left\|-\eta\nabla_{S'\cup B} g(\xx) - \oxx_B\right\|_{2}^2\\ 
& = 
- (4\beta)^{-1} \left\|\nabla_{S'\cup B} g(\xx)\right\|_2^2
+ \beta \left\|\oxx_B\right\|_{2}^2\\
& = 
- \beta \left\|\eta\nabla_{S\cup A} g(\xx)\right\|_2^2
+ \beta \left\|\oxx_B\right\|_{2}^2\\
& \leq 
- \beta \left\|\eta\nabla_{S\cup A'} g(\xx)\right\|_2^2
+ \beta \left\|\oxx_{B'}\right\|_{2}^2\,,
\label{eq:iht_smoothness}
\end{aligned}
\end{equation}
for any two sets $A'\in[n]\backslash S$ and $B'\subseteq S$ with $|A'| = |B'|$.
The latter inequality follows because of the following lemma about IHT:
\begin{lemma}
Suppose that we run one step of IHT on vector $\xx$ supported on $S$ for some function $g$,
and let
the updated solution vector be $\xx' = \oxx_{(S\cup A)\backslash B}$, where
$\oxx = \xx - \eta\nabla g(\xx)$.
Then, for any $A'\subseteq [n]\backslash S$ and $B'\subseteq S$ with $|A'|=|B'|$, we have
\begin{align}
- \left\|\eta\nabla_{A} g(\xx)\right\|_2^2
+ \left\|\oxx_B\right\|_{2}^2
 \leq 
- \left\|\eta\nabla_{A'} g(\xx)\right\|_2^2
+ \left\|\oxx_{B'}\right\|_{2}^2\,.
\label{eq:iht_optimality}
\end{align}
\end{lemma}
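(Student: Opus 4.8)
The plan is to remove the gradient entirely and recast \eqref{eq:iht_optimality} as a purely combinatorial statement about the single vector $\oxx = \xx - \eta\nabla g(\xx)$. The key observation is that $A$ and $A'$ are both subsets of $[n]\setminus S$, where $\xx$ vanishes, so there $\oxx_i = -\eta\nabla_i g(\xx)$ and hence $\left\|\eta\nabla_{A} g(\xx)\right\|_2^2 = \left\|\oxx_{A}\right\|_2^2$, and likewise with $A'$ in place of $A$. After this substitution, \eqref{eq:iht_optimality} is equivalent to
\[ \left\|\oxx_{A'}\right\|_2^2 - \left\|\oxx_{B'}\right\|_2^2 \ \le\ \left\|\oxx_{A}\right\|_2^2 - \left\|\oxx_{B}\right\|_2^2 \]
for all $A'\subseteq[n]\setminus S$ and $B'\subseteq S$ with $|A'|=|B'|$; informally, among all ways of trading $k$ coordinates of $\oxx$ out of $S$ for $k$ coordinates into $S$, the move made by hard thresholding is the best one.

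To prove this I would first record the threshold structure of $H_{s'}$. Let $\theta$ be the $s'$-th largest value of $\{|\oxx_i|\}_{i\in[n]}$; breaking ties with the same rule used inside $H_{s'}$, the output support $S' = (S\cup A)\setminus B$ is exactly $\{i : |\oxx_i|\ge\theta\}$. Since $A\subseteq S'$ and $B\subseteq[n]\setminus S'$, every coordinate of $\oxx$ on $A$ has magnitude $\ge\theta$ and every coordinate on $B$ has magnitude $\le\theta$; moreover, writing $m := |A| = |B|$ (these are equal because $|S'| = |S| = s'$), the set $A$ is precisely the $m$ largest-magnitude coordinates of $\oxx$ inside $[n]\setminus S$ and $B$ is the $m$ smallest-magnitude coordinates inside $S$. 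Next, for a fixed common size $k$ the quantity $\left\|\oxx_{A'}\right\|_2^2 - \left\|\oxx_{B'}\right\|_2^2$ decouples over $A'$ and $B'$, so it is maximized by taking $A'$ to be the $k$ largest coordinates of $\oxx$ in $[n]\setminus S$ and $B'$ the $k$ smallest coordinates in $S$; call the resulting value $\phi(k)$. A short exchange argument (equivalently, discrete concavity of $\phi$) then finishes the proof: the increment $\phi(k)-\phi(k-1)$ equals $|\oxx_{a_k}|^2 - |\oxx_{b_k}|^2$, where $a_k$ (resp. $b_k$) is the $k$-th largest (resp. $k$-th smallest) magnitude coordinate of $[n]\setminus S$ (resp. $S$); for $k\le m$ we have $a_k\in A$ and $b_k\in B$, so this increment is $\ge 0$ via $|\oxx_{a_k}| \ge \theta \ge |\oxx_{b_k}|$, while for $k > m$ we have $a_k\notin S'$ and $b_k\in S'$, so it is $\le 0$. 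Hence $\phi$ is maximized at $k = m$, where $\phi(m) = \left\|\oxx_A\right\|_2^2 - \left\|\oxx_B\right\|_2^2$, which is exactly the claimed bound.

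I expect the only delicate part to be the bookkeeping in the second step — checking carefully that the hard-thresholding sets $A$ and $B$ coincide with the canonical ``top-$m$'' and ``bottom-$m$'' slices, keeping tie-breaking consistent between $H_{s'}$ and the rearrangement step, and handling the degenerate case $|S| < s'$ (where $B = \emptyset$ and one simply pads the support of $\xx$ to size $s'$). The analytic content — the rearrangement inequality together with the concavity of $\phi$ in $k$ — is routine.
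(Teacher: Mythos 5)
Your proof is correct and follows essentially the same route as the paper's: you first observe that off the support $\oxx$ coincides with $-\eta\nabla g(\xx)$, reduce to the canonical choice of $A'$ (top magnitudes off $S$) and $B'$ (bottom magnitudes in $S$), and then compare against the IHT sets via the threshold value of the hard-thresholding step. Your telescoping/unimodality argument for $\phi(k)$ is just a repackaging of the paper's two containment cases ($t'>t$ and $t'<t$), relying on the same key comparison that non-selected coordinates have magnitude at most that of selected ones, so there is no substantive difference.
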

\begin{proof}
If we denote $|A| = |B| = t$ and $|A'|=|B'|=t'$, then note that by definition of IHT, $A$ are the $t$ largest entries in
\begin{align*}
\left|\oxx_{[n]\backslash S}\right|
= \eta \left|\nabla_{[n]\backslash S} g(\xx)\right| \,,
\end{align*}
and $B$ are the $t$ smallest entries in $\left|\oxx_{S}\right|$. Similarly, we can assume that $A'$ are the $t'$ largest entries
in $\eta \left|\nabla_{[n]\backslash S} g(\xx)\right|$
and $B'$ are the $t'$ smallest entries in $\left|\oxx_S\right|$, since this way the right hand side of 
(\ref{eq:iht_optimality}) takes its minimum value. If $t'=t$, we are done. We consider two cases:
\begin{enumerate}
\item{$t' > t$: In this case we have $A'\supseteq A$, $B'\supseteq B$, so
\begin{align*}
& 
- \left\|\eta\nabla_{A'} g(\xx)\right\|_2^2 + \left\|\oxx_{B'}\right\|_{2}^2
 + \left\|\eta\nabla_{A} g(\xx)\right\|_2^2 - \left\|\oxx_{B}\right\|_{2}^2\\
& = - \left\|\eta\nabla_{A'\backslash A} g(\xx)\right\|_2^2 + \left\|\oxx_{B'\backslash B}\right\|_{2}^2\\
& = - \left\|\oxx_{A'\backslash A}\right\|_2^2 + \left\|\oxx_{B'\backslash B}\right\|_{2}^2\\
& \geq (t'-t)\left(-\max_{i\in A'\backslash A} (\ox_i)^2 + \min_{j\in B'\backslash B} (\ox_j)^2\right)\\
&\geq 0 \,,
\end{align*}
where the last inequality follows since, by definition of the IHT step, 
$\left|\ox_i\right| \leq \left|\ox_j\right|$ for any $i\in A'\backslash A$ and $j\in B'\backslash B$. Otherwise,  $i$ would have taken $j$'s place in $S'$.
}
\item{$t' < t$: In this case we have $A' \subseteq A$, $B'\subseteq B$. Similarly to the previous case,
\begin{align*}
& 
- \left\|\eta\nabla_{A'} g(\xx)\right\|_2^2 + \left\|\oxx_{B'}\right\|_{2}^2
 + \left\|\eta\nabla_{A} g(\xx)\right\|_2^2 - \left\|\oxx_{B}\right\|_{2}^2\\
& = \left\|\eta\nabla_{A\backslash A'} g(\xx)\right\|_2^2 - \left\|\oxx_{B\backslash B'}\right\|_{2}^2\\
& = \left\|\oxx_{A\backslash A'}\right\|_2^2 - \left\|\oxx_{B\backslash B'}\right\|_{2}^2\\
& \geq (t-t')\left(\min_{i\in A\backslash A'} (\ox_i)^2 - \max_{j\in B\backslash B'} (\ox_j)^2\right)\\
&\geq 0 \,,
\end{align*}
where the last inequality follows since, by definition of the IHT step,
$\left|\ox_i\right| \geq \left|\ox_j\right|$ for any $i\in A\backslash A'$ and $j\in B\backslash B'$. Otherwise $i$ wouldn't have taken $j$'s place in $S'$.
}
\end{enumerate}
\end{proof}
Now, let us assume that the first bullet in the lemma statement is false, i.e.
\begin{align*}
g(\xx') - g(\xx) > -(16\kappa)^{-1} (g(\xx) - f(\xx^*))\,.
\end{align*}
Setting $A' = B' = \emptyset$ in (\ref{eq:iht_smoothness}), we get that
\begin{align*}
& g(\xx') - g(\xx) \leq 
- \beta \left\|\eta\nabla_{S} g(\xx)\right\|_2^2\,,
\end{align*}
so we conclude that
\begin{align}
\left\|\nabla_{S} g(\xx)\right\|_2^2< \frac{1}{16 \kappa \beta \eta^2} \left(g(\xx) - f(\xx^*)\right)
= \frac{\alpha}{4} \left(g(\xx) - f(\xx^*)\right)\,.
\label{eq:gradient_small}
\end{align}
Now, we again use (\ref{eq:iht_smoothness}) but we set 
$A'$ to be the $s$ entries from $[n]\backslash S$ on which
$\nabla g(\xx)$ has the largest magnitude,
and $B'$ to be the $s$ entries from $S$ on which $\oxx$ has the smallest magnitude. Also,
let $R$ be an arbitrary subset of $S\backslash S^*$ with size $r \geq 2s$.
We then have
\begin{equation}
\begin{aligned}
& g(\xx') - g(\xx)\\
& \leq - (4\beta)^{-1} \left\|\nabla_{S\cup A'} g(\xx)\right\|_2^2
+ \beta \left\|\oxx_{B'}\right\|_{2}^2\\
& \leq 
- (4\beta)^{-1} \left\|\nabla_{S\cup S^*} g(\xx)\right\|_2^2
+ \beta \left\|\oxx_{B'}\right\|_{2}^2\\
& \leq 
- (4\beta)^{-1} \left\|\nabla_{S\cup S^*} g(\xx)\right\|_2^2
+ \frac{\beta s}{|R\backslash S^*|} \left\|\oxx_{R\backslash S^*}\right\|_{2}^2\\
& \leq 
- (4\beta)^{-1} \left\|\nabla_{S\cup S^*} g(\xx)\right\|_2^2
+ \frac{\beta s}{r-s} \left\|\oxx_{R\backslash S^*}\right\|_{2}^2
\,,
\end{aligned}
\label{eq:smoothness}
\end{equation}
where we used the fact that 
\begin{align*}
\left\|\nabla_{A'} g(\xx)\right\|_2^2 \geq \left\|\nabla_{S^*\backslash S} g(\xx)\right\|_2^2
\end{align*}
by definition of $A'$ (and since $|S^*\backslash S|\leq s$), 
and the fact that, by definition of $B'$ (and since $|R\backslash S^*| \geq 2s - s = |B'|$),
\begin{align*}
 \frac{1}{|B'|} \left\|\oxx_{B'}\right\|_{2}^2
& \leq \frac{1}{|R\backslash S^*|} \left\|\oxx_{R\backslash S^*}\right\|_{2}^2\,.
\end{align*}
In fact, we will let 
$R = \{i\in S\ |\ w_i > 0\}$ be the set of elements that are being regularized.
To lower bound the size $r$ of this set, note that by the guarantee of the 
lemma statement,
\begin{align*}
n - s'/2
& \leq 
\left\|\ww\right\|_1 \\
& \leq n - \left|\{\text{$i\in S\ |\ w_i = 0$}\}\right|\\
& = n - (s' - r)\,,
\end{align*}
so $r \geq s'/2$. We conclude that   $r\geq 2s$ since $s' \geq 4s$.

Now, because of the fact that $f$ is $\alpha$-strongly convex, we have
\begin{equation}
\begin{aligned}
& f(\xx^*) - f(\xx) \\
& \geq \langle \nabla f(\xx), \xx^* - \xx\rangle + (\alpha/2) \left\|\xx^* - \xx\right\|_2^2\\
& = \langle \nabla g(\xx), \xx^* - \xx\rangle - \beta \langle \ww\xx, \xx^* - \xx\rangle
+ (\alpha/2) \left\|\xx^* - \xx\right\|_2^2\\
& \geq -\alpha^{-1} \left\|\nabla_{S\cup S^*} g(\xx)\right\|_2^2
{\ifdefined\arxiv
\else
\\ &
\fi}
- \beta \langle \ww\xx, \xx^* - \xx\rangle
+ (\alpha/4) \left\|\xx^* - \xx\right\|_2^2
\,,
\end{aligned}
\label{eq:strconvex}
\end{equation}
where we used the inequality
\[ \langle \aa, \bb\rangle + (\alpha/4)\left\|\bb\right\|_2^2 
\geq - \alpha^{-1}\left\|\aa\right\|_2^2\,. \]
By re-arranging and plugging (\ref{eq:strconvex}) into (\ref{eq:smoothness}), we get
\begin{equation}
\begin{aligned}
& g(\xx') - g(\xx)\\
& \leq
-(4\kappa)^{-1}
\Big(f(\xx) - f(\xx^*) 
{\ifdefined\arxiv
\else
\\ &
\fi}
- \beta \langle \ww\xx, \xx^*-\xx\rangle
 + (\alpha/4) \left\|\xx^* - \xx\right\|_2^2
\Big) 
{\ifdefined\arxiv
\else
\\ &
\fi}
+ \frac{\beta s}{r-s} \left\|\oxx_{R\backslash S^*}\right\|_{2}^2\\
& =
-(4\kappa)^{-1} \Big(g(\xx) - f(\xx^*) 
{\ifdefined\arxiv
\else
\\ &
\fi}
- \beta \langle \ww\xx, \xx^*-\xx\rangle - (\beta/2) \left\|\xx\right\|_{\ww,2}^2 
{\ifdefined\arxiv
\else
\\ &
\fi}
+ (\alpha/4) \left\|\xx^* - \xx\right\|_2^2
 - \frac{4\kappa\beta s}{r-s} \left\|\oxx_{R\backslash S^*}\right\|_2^2\Big)\,.
\end{aligned}
\label{eq:progress}
\end{equation}
Now, note that by definition of $\oxx$ we have
\begin{align*}
& \left\|\oxx_{R\backslash S^*}\right\|_2^2\\
& \leq 
2 \left\|\xx_{R\backslash S^*}\right\|_2^2
 + 2(2\beta)^{-2} \left\|\nabla_{R\backslash S^*} g(\xx)\right\|_2^2
\end{align*}
and, since $w_i\geq 1/2$ for each $i\in R$,
\begin{align*}
\left\|\xx_{R\backslash S^*}\right\|_2^2
\leq 2 \left\|\xx_{R\backslash S^*}\right\|_{\ww,2}^2
\leq 2 \left\|\xx_{S\backslash S^*}\right\|_{\ww,2}^2\,.
\end{align*}
Therefore,
 \begin{align*}
 & \frac{4\kappa\beta s}{r-s} \left\|\oxx_{R\backslash S^*}\right\|_2^2\\
 & \leq \frac{16\kappa\beta s}{r-s} \left\|\xx_{S\backslash S^*}\right\|_{\ww,2}^2
 + \frac{2\kappa s}{\beta (r-s)} \left\|\nabla_{R\backslash S^*} g(\xx)\right\|_2^2
 \\
 & \leq \frac{16\kappa\beta s}{r-s} \left\|\xx_{S\backslash S^*}\right\|_{\ww,2}^2
 + \frac{s}{2(r-s)} (g(\xx)-f(\xx^*))\,,
 \end{align*}
 where the last inequality follows from (\ref{eq:gradient_small})
 since $R\backslash S^* \subseteq S$.
 Plugging this back into
 (\ref{eq:progress}), we get
\begin{equation}
\begin{aligned}
& g(\xx') - g(\xx)\\
& \leq
-(4\kappa)^{-1} \Big(\left(1 - \frac{s}{2(r-s)}\right)\left(g(\xx) - f(\xx^*)\right) 
\\ &
- \beta \langle \ww\xx, \xx^*-\xx\rangle - (\beta/2) \left\|\xx\right\|_{\ww,2}^2 
{\ifdefined\arxiv
\else
\\ &
\fi}
+ (\alpha/4) \left\|\xx^* - \xx\right\|_2^2 - \frac{16\kappa\beta s}{r-s} \left\|\xx_{S\backslash S^*}\right\|_{\ww,2}^2\Big)\\
& =
-(4\kappa)^{-1} \Big(\left(1 - \frac{s}{2(r-s)}\right)\left(g(\xx) - f(\xx^*)\right) \\
& \underbrace{- \beta \langle \ww\xx_{S\cap S^*}, \xx^*-\xx\rangle
 + (\alpha/4) \left\|\xx^* - \xx\right\|_2^2}_{\geq -(\kappa\beta)\left\|\xx_{S\cap S^*}\right\|_{\ww^2,2}^2}
{\ifdefined\arxiv
\else
 \\ &
\fi}
 + \beta \left\|\xx_{S\backslash S^*}\right\|_{\ww,2}^2 - (\beta/2) \left\|\xx\right\|_{\ww,2}^2 
\underbrace{- \frac{16\kappa\beta s}{r-s} \left\|\xx_{S\backslash S^*}\right\|_{\ww,2}^2}_{
\geq -(\beta/4)\left\|\xx_{S\backslash S^*}\right\|_{\ww,2}^2}\Big)\\
& \leq
-(4\kappa)^{-1} \Big(\left(1 - \frac{s}{2(r-s)}\right)\left(g(\xx) - f(\xx^*)\right) 
{\ifdefined\arxiv
\else
\\ &
\fi}
- (\kappa\beta) \left\|\xx_{S\cap S^*}\right\|_{\ww^2,2}^2 
 - (\beta/2) \left\|\xx_{S\cap S^*}\right\|_{\ww,2}^2 
{\ifdefined\arxiv
\else
\\ &
\fi}
 + (\beta/4) \left\|\xx_{S\backslash S^*}\right\|_{\ww,2}^2\Big)\\
& \leq
-(4\kappa)^{-1} \Big(0.5\left(g(\xx) - f(\xx^*)\right) 
{\ifdefined\arxiv
\else
\\ &
\fi}
- (\kappa+1)\beta \left\|\xx_{S\cap S^*}\right\|_{\ww^2,2}^2 + (\beta/4) \left\|\xx_{S\backslash S^*}\right\|_{\ww,2}^2\Big)\,,
\end{aligned}
\end{equation}
where we used the fact that
\begin{align*}
\frac{s}{2(r-s)} \leq 1/2\,,
\end{align*}
which holds as long as  $s' \geq 4s$, and
\begin{align*}
\frac{16\kappa\beta s}{r-s} \leq \beta/4\,,
\end{align*}
which holds as long as $r\geq s'/2$ and $s' \geq (128\kappa + 2) s$.
In the last inequality we also used the property $\ww/2 \leq \ww^2$, which is by definition of $\ww$.

Now, note that, because we have assumed that the first bullet of the statement doesn't hold, it has
to be the case that
\begin{align*}
& (1/4)\left(g(\xx) - f(\xx^*)\right) 
{\ifdefined\arxiv
\else
\\ &
\fi}
- (\kappa+1)\beta \left\|\xx_{S\cap S^*}\right\|_{\ww^2,2}^2 + (\beta/4) \left\|\xx_{S\backslash S^*}\right\|_{\ww,2}^2 \leq 0\,.
\end{align*}
This immediately implies that
\begin{align*}
 &(\kappa+1)\beta \left\|\xx_{S\cap S^*}\right\|_{\ww^2,2}^2 \geq (\beta/4) \left\|\xx_{S\backslash S^*}\right\|_{\ww,2}^2\\
 &\Rightarrow
 (4\kappa + 4) \left\|\xx_{S\cap S^*}\right\|_{\ww^2,2}^2 \geq \left\|\xx_{S\backslash S^*}\right\|_{\ww,2}^2\\
 &\Rightarrow
 (4\kappa + 6) \left\|\xx_{S\cap S^*}\right\|_{\ww^2,2}^2 \geq \left\|\xx\right\|_{\ww,2}^2\,,
\end{align*}
so
\begin{align*}
 \left\|\xx_{S\cap S^*}\right\|_{\ww^2,2}^2 \geq (4\kappa+6)^{-1} \left\|\xx\right\|_{\ww,2}^2\,.
\end{align*}
Similarly we also have
\begin{align*}
& (\kappa+1)\beta \left\|\xx_{S\cap S^*}\right\|_{\ww^2,2}^2 
\geq (1/4)\left(g(\xx) - f(\xx^*)\right) \\
& \Rightarrow 
 (\beta/2) \left\|\xx_{S\cap S^*}\right\|_{\ww^2,2}^2 \geq
(8\kappa+8)^{-1} (g(\xx) - f(\xx^*))\,.
\end{align*}
Therefore the second bullet of the statement is true, and we are done.

\ifx 0

\[ r \geq s/2 \geq 16 (1-\delta)^{-1} (\beta/\alpha) s^*\]
and so (\ref{eq:progress1}) becomes
\begin{equation}
\begin{aligned}
& \geq (\beta/4) \left\|\xx_{S\backslash S^*}\right\|_{\ww,2}^2 
+ (16 \beta)^{-1} \left\|\nabla_{R\backslash S^*} g(\xx)\right\|_2^2
{\ifdefined\arxiv
\else
\\ &
\fi}
- \beta \langle \xx_{S\cap S^*}, \xx^*-\xx\rangle_{\ww}\,.
\end{aligned}
\label{eq:progress2}
\end{equation}
Now, we write
\begin{equation}
\begin{aligned}
& - \beta \langle \xx, \xx^*-\xx\rangle_{\ww} - (\beta/2) \left\|\xx\right\|_{\ww,2}^2 
{\ifdefined\arxiv
\else
\\ & 
\fi}
- (\beta^2 s^*/(\alpha (1-\delta) r)) \left\|\oxx_{R\backslash S^*}\right\|_{2}^2\\
& = (\beta/2) \left\|\xx_{S\backslash S^*}\right\|_{\ww,2}^2  
- (\beta^2 s^*/(\alpha (1-\delta) r)) \left\|\oxx_{R\backslash S^*}\right\|_{2}^2
{\ifdefined\arxiv
\else
\\ &
\fi}
- \beta \langle \xx_{S\cap S^*}, \xx^*-\xx\rangle_{\ww}
\,.
\end{aligned}
\label{eq:progress1}
\end{equation}
The last term together with the last term of (\ref{eq:progress}) can be bounded as
\begin{align*}
& - \beta \langle \xx_{S\cap S^*}, \xx^*-\xx\rangle_{\ww}
+ (\alpha\delta/2) \left\|\xx^* - \xx\right\|_2^2\\
& \geq - (\beta^2 / (2\alpha\delta))\left\|\xx_{S\cap S^*}\right\|_{\ww^2,2}^2
\end{align*}
Additionally, note that 
\[ (2\beta)^{-1}\left\|\nabla_{R\backslash S^*} g(\xx)\right\|_2^2
\leq (4\beta/\alpha)^{-1} (g(\xx) - f(\xx^*))\,. \]
Putting everything together, we have
\begin{equation}
\begin{aligned}
& g(\oxx_{(S\cup A)\backslash B}) - g(\xx)\\
& \leq
-(1-\delta)(2\beta/\alpha)^{-1} \Big((1/2)(g(\xx) - f(\xx^*)) 
{\ifdefined\arxiv
\else
\\ &
\fi}
+ (\beta/4) \left\|\xx_{S\backslash S^*}\right\|_{\ww,2}^2
- (\beta^2/(2\alpha\delta)) \left\|\xx_{S^*}\right\|_{\ww^2,2}^2
\Big)\,.
\end{aligned}
\label{eq:progress}
\end{equation}
Now, if 
\begin{align*}
(\beta/4) \left\|\xx_{S\backslash S^*}\right\|_{\ww,2}^2
- (\beta^2/(2\alpha\delta)) \left\|\xx_{S^*}\right\|_{\ww^2,2}^2 \geq 0\,,
\end{align*}
and setting $\delta=1/4$,
we get that 
\begin{align*}
& g(\oxx_{(S\cup A)\backslash B}) - g(\xx)\\
& \leq
-(16\beta/(3\alpha))^{-1} \left(g(\xx) - f(\xx^*)\right)\,,
\end{align*}
which satisfies the second bullet of the lemma statement.
Additionally, if 
\[ (\beta^2/(2\alpha\delta))\left\|\xx_{S^*}\right\|_{\ww^2,2}^2 \leq (1/4)(g(\xx) - f(\xx^*))\,, \]
then
\begin{align*}
& g(\oxx_{(S\cup A)\backslash B}) - g(\xx)\\
& \leq
-(32\beta/(3\alpha))^{-1} \left(g(\xx) - f(\xx^*)\right)\,,
\end{align*}
which also satisfies the second bullet.
\paragraph{Third bullet} By the above discussion, if neither of the first two bullets
are satisfied, we must have
\begin{align*}
\left\|\xx_{S^*}\right\|_{\ww^2,2}^2
\geq 
(8\beta/\alpha)^{-1} \left\|\xx_{S\backslash S^*}\right\|_{\ww,2}^2
\end{align*}
and
\begin{align*}
(\beta/2)\left\|\xx_{S^*}\right\|_{\ww^2,2}^2
\geq (16\beta/\alpha)^{-1} \left(g(\xx) - f(\xx^*)\right)\,.
\end{align*}
In the former, we can use the fact that $\ww^2 \geq \ww / 2$ to obtain
\begin{align*}
\left\|\xx_{S^*}\right\|_{\ww^2,2}^2
\geq 
(8\beta/\alpha + 2)^{-1} \left\|\xx\right\|_{\ww,2}^2\,.
\end{align*}
So the conditions of the third bullet are satisfied. 
\fi
\end{proof}

\section{Low Rank Minimization}
\label{sec:proofs_lowrank}

\subsection{Preliminaries}
We will use the following simple lemma about Frobenius products between low-rank projections and symmetric PSD matrices.
We remind the reader that $H_r\left(\AA\right)$ is the matrix consisting of the top $r$ components from the singular value decomposition of $\AA$.
\begin{lemma}
For any two symmetric PSD matrices $\vPi, \AA\in\mathbb{R}^{n\times n}$, where 
$\rank(\vPi) \leq r$ and $\left\|\vPi\right\|_2 \leq 1$, we have that
\begin{align*}
\left|\langle \vPi, \AA\rangle\right|
\leq \mathrm{Tr}\left[H_r(\AA)\right]\,.
\end{align*}
\label{fact:matrix_ineq}
\end{lemma}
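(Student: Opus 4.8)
The plan is to reduce the statement to the Ky Fan maximum principle (or, if one prefers a self-contained argument, to a direct computation). First I would record the elementary observations that make the absolute value harmless: since $\AA$ is symmetric PSD, its singular value decomposition coincides with its eigendecomposition, so writing $\lambda_1\geq\lambda_2\geq\dots\geq\lambda_n\geq 0$ for the eigenvalues of $\AA$ with orthonormal eigenvectors $\vv_1,\dots,\vv_n$, we have $\mathrm{Tr}\left[H_r(\AA)\right] = \sum_{i=1}^r \lambda_i \geq 0$. Moreover $\langle \vPi,\AA\rangle = \mathrm{Tr}[\vPi\AA] = \mathrm{Tr}[\AA^{1/2}\vPi\AA^{1/2}] \geq 0$ since $\AA$ and $\vPi$ are PSD, so $|\langle\vPi,\AA\rangle| = \langle\vPi,\AA\rangle$ and it suffices to prove $\langle\vPi,\AA\rangle \leq \sum_{i=1}^r\lambda_i$.

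Next I would diagonalize $\vPi$. Since $\vPi$ is symmetric PSD with $\left\|\vPi\right\|_2\leq 1$ and $\rank(\vPi)\leq r$, write $\vPi = \sum_{j=1}^r \mu_j \uu_j\uu_j^\top$ with eigenvalues $\mu_j\in[0,1]$ and orthonormal $\uu_1,\dots,\uu_r$ (appending zero eigenvalues if $\rank(\vPi)<r$). Then $\langle\vPi,\AA\rangle = \sum_{j=1}^r \mu_j\,\uu_j^\top\AA\uu_j \leq \sum_{j=1}^r \uu_j^\top\AA\uu_j$, using $\mu_j\leq 1$ together with $\uu_j^\top\AA\uu_j\geq 0$ (as $\AA$ is PSD). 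Letting $\UU\in\mathbb{R}^{n\times r}$ have columns $\uu_1,\dots,\uu_r$, the right-hand side equals $\mathrm{Tr}[\UU^\top\AA\UU]$ with $\UU^\top\UU = \II$.

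Finally I would invoke the Courant--Fischer/Ky Fan characterization: over all $\UU\in\mathbb{R}^{n\times r}$ with orthonormal columns, $\mathrm{Tr}[\UU^\top\AA\UU]$ is maximized by taking the columns to be the top $r$ eigenvectors of $\AA$, with maximum value $\sum_{i=1}^r\lambda_i$. Hence $\langle\vPi,\AA\rangle\leq\mathrm{Tr}[\UU^\top\AA\UU]\leq\sum_{i=1}^r\lambda_i = \mathrm{Tr}[H_r(\AA)]$, which combined with the first paragraph proves the claim. I do not expect a real obstacle here; the only points needing care are (i) the identification of $H_r$ — defined via singular values — with the sum of the top $r$ \emph{eigenvalues}, which uses symmetry and positive semidefiniteness of $\AA$, and (ii) the clean invocation of Ky Fan. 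To avoid citing Ky Fan entirely, one can instead replace $\vPi$ by the orthogonal projection $\vPi'$ onto $\mathrm{span}(\uu_1,\dots,\uu_r)$ (which only increases the inner product against the PSD matrix $\AA$, since $\langle\vPi',\AA\rangle-\langle\vPi,\AA\rangle = \sum_j (1-\mu_j)\uu_j^\top\AA\uu_j\geq 0$), and then compute directly $\langle\vPi',\AA\rangle = \mathrm{Tr}[\vPi'\AA\vPi'] = \sum_{i=1}^n\lambda_i\left\|\vPi'\vv_i\right\|_2^2$; since $c_i := \left\|\vPi'\vv_i\right\|_2^2\in[0,1]$ and $\sum_{i=1}^n c_i = \mathrm{Tr}[\vPi'] = \rank(\vPi')\leq r$, the weighted sum $\sum_i\lambda_i c_i$ is at most the sum of the $r$ largest $\lambda_i$, i.e. $\mathrm{Tr}[H_r(\AA)]$.
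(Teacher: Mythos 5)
Your proof is correct, but it takes a different route from the paper. The paper's argument is a short computation based on the singular value inequality for products, $\sum_{i=1}^k\sigma_i(\vPi\AA)\le\sum_{i=1}^k\sigma_i(\vPi)\sigma_i(\AA)$ (cited from a matrix-analysis text): it bounds $\left|\langle\vPi,\AA\rangle\right|=\left|\mathrm{Tr}[\vPi\AA]\right|$ by $\sum_i\sigma_i(\vPi\AA)$, then by $\sum_{i=1}^r\sigma_i(\vPi)\sigma_i(\AA)\le\sum_{i=1}^r\sigma_i(\AA)=\mathrm{Tr}[H_r(\AA)]$. You instead diagonalize $\vPi$, use $\mu_j\in[0,1]$ and $\uu_j^\top\AA\uu_j\ge 0$ to reduce to $\mathrm{Tr}[\UU^\top\AA\UU]$ with $\UU^\top\UU=\II$, and invoke the Ky Fan maximum principle (or, in your second variant, avoid any citation by passing to the projection $\vPi'$ and running the elementary weighted-sum argument with $c_i=\|\vPi'\vv_i\|_2^2\in[0,1]$, $\sum_i c_i\le r$). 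What each buys: your argument, especially the projection variant, is self-contained and exploits the PSD structure of both matrices, which also lets you dispose of the absolute value at the outset since $\langle\vPi,\AA\rangle=\mathrm{Tr}[\AA^{1/2}\vPi\AA^{1/2}]\ge 0$; the paper's argument is shorter modulo the quoted majorization inequality and is slightly more general, since it never uses positive semidefiniteness of $\vPi$ (for arbitrary $\vPi$ with $\|\vPi\|_2\le 1$ and $\rank(\vPi)\le r$ one still has $\left|\mathrm{Tr}[\vPi\AA]\right|\le\sum_i\sigma_i(\vPi\AA)$, which is all that is needed there). Both identifications of $\mathrm{Tr}[H_r(\AA)]$ with the sum of the top $r$ eigenvalues use symmetry and positive semidefiniteness of $\AA$, which you correctly flag.
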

\begin{proof}
We will use the following inequality for singular values
$$\sum_{i=1}^k\sigma_i(AB)\le \sum_{i=1}^k\sigma_i(A)\sigma_i(B)$$
for $k=1,\dots,n$, $A,B\in \mathbb{R}^{n\times n} $ and $\sigma_1(A)\ge \dots \ge \sigma_n(A)$ are singular values of matrix $A$ (see page 177 in \cite{H}).
Then
\begin{align*}
\left|\langle \vPi, \AA\rangle\right|&= \mathrm{Tr}\left[ \vPi \AA^\top\right]= \mathrm{Tr}\left[ \vPi \AA\right]=
\sum_{i=1}^n\sigma_i(\vPi \AA) \\
& \le \sum_{i=1}^n\sigma_i(\vPi)\sigma_i( \AA)
= \sum_{i=1}^r\sigma_i(\vPi)\sigma_i( \AA)\le  \sum_{i=1}^r \sigma_i( \AA)
=\mathrm{Tr}\left[H_r(\AA)\right].
\end{align*}
\ifx 
Let $\AA = \UU\vLambda \UU^\top$ be the eigendecomposition of $\AA$, and
let $\zz$ be the vectorized diagonal of $\UU^\top \vPi \UU$ (note that this is not necessarily a diagonal matrix)
and $\vlambda$ the vectorized diagonal of $\vLambda$. We have
\begin{align*}
& \left|\langle \vPi, \AA\rangle\right|
= \left|\langle \vPi, \UU \vLambda \UU^\top\rangle\right|
= \left|\langle \UU^\top \vPi \UU, \vLambda\rangle\right|
= \left|\langle \zz, \vlambda\rangle\right|\,,
\end{align*}
where the last equality follows by the fact that $\vLambda$ is diagonal, and so the Frobenius inner product only acts over the diagonal terms of $\UU^\top \vPi \UU$.
Now, it is a well known fact that $\OO \preceq \vPi \preceq \II$ implies
\begin{align}
\OO \preceq \UU^\top \vPi \UU \preceq \UU^\top \UU \preceq \II \,, 
\label{eq:proj_spectral_ineq}
\end{align}
because $\UU$ has orthonormal columns. As $\zz$ is the diagonal of $\UU^\top \vPi \UU$, 
(\ref{eq:proj_spectral_ineq}) implies that
for all $i$ we have
\[ z_i = \onev_i^\top \UU^\top \vPi\UU\onev_i \in [0,1]\,. \]
Additionally, 
\[ \left\|\zz\right\|_1 
= \mathrm{Tr}\left[\UU^\top \vPi \UU\right]
\leq  \mathrm{Tr}\left[\vPi\right] \leq \rank(\vPi) \left\|\vPi\right\|_2\leq r \,,\]
where we used the fact that 
\[ 
\mathrm{Tr}\left[\UU^\top \vPi \UU\right]
= \mathrm{Tr}\left[\vPi \UU\UU^\top \vPi\right]
\leq \mathrm{Tr}\left[\vPi^2 \right]
=  \mathrm{Tr}\left[\vPi\right] \,.\]
Therefore, we conclude that
\begin{align*}
 \left|\langle \zz, \vlambda\rangle\right|
& \leq \underset{\zz:\substack{\zerov\leq \zz\leq \onev \\ \left\|\zz\right\|_1\leq r}}{\max}\, \langle \zz, \vlambda\rangle\\
& = \left\|H_r(\vlambda)\right\|_1\\
& = \mathrm{Tr}\left[H_r(\AA)\right]\,,
\end{align*}
where $H_r(\vlambda)$ is a thresholding operator that keeps the top $r$ entries (in absolute value) of $\vlambda$, and we used the fact that $\vlambda\geq \zerov$ because $\AA$ is PSD
and $\vlambda$ are its eigenvalues.
\fi
\end{proof}

\subsection{Analysis}

This section is devoted to proving Theorem~\ref{thm:rank_ARHT_plus}, which analyzes 
an algorithm for low rank optimization that uses adaptive regularization.
\begin{reptheorem}{thm:rank_ARHT_plus}[Adaptive Regularization for Low Rank Optimization]
Let $f\in\mathbb{R}^{m\times n}\rightarrow\mathbb{R}$ be a
convex function with condition number $\kappa$ and
consider the low rank minimization
problem
\begin{align}
\underset{\rank(\AA) \leq r}{\min}\, f(\AA)\,.
\label{eq:rank_objective_intro}
\end{align}
For any error parameter $\eps>0$,
there exists a polynomial time algorithm that returns a matrix $\AA$ with
$\rank(\AA) \leq O\left(r\left(\kappa+\log \frac{f(\OO)-f(\AA^*)}{\eps}\right)\right)$
and 
$f(\AA) \leq f(\AA^*) + \eps$, where $\AA^*$ is any rank-$r$ matrix.
\end{reptheorem}
\begin{proof}[Proof of Theorem~\ref{thm:rank_ARHT_plus}]
Let the smoothness and strong convexity parameters of $f$ be $\beta,\alpha$.
We repeatedly apply Lemma~\ref{lem:rank_step}
$T\geq O\left(r\kappa \log \frac{f(\AA^0) + (\beta/2)\left\|\AA^0\right\|_F^2 - f(\AA^*)}{\epsilon}\right)$
times starting from solution $\AA^0=\OO$
and weight matrices $\WW^0 = \II$, $\YY^0=\II$. Thus, we obtain
solutions $\AA^0,\dots,\AA^{T}$, and weights
$\WW^0,\WW^1,\dots,\WW^{T}$ and $\YY^0,\YY^1,\dots,\YY^T$.
We let 
\begin{align*}
& g^t(\AA) \\
& = f(\AA) + (\beta/4) \left(\langle\WW^t, \AA^t(\AA^t)^\top \rangle + 
\langle \YY^t, (\AA^t)^\top \AA^\top\rangle\right)
\end{align*} be the regularized function at iteration $t$.

We denote by $T_i$ the total number of iterations for which item $i\in\{1,2,3\}$ from the statement of Lemma~\ref{lem:rank_step} holds.

Consider the $T_2$ iterations for which 
item $2$ from the statement of Lemma~\ref{lem:rank_step} holds.
Without loss of generality, $\WW$ is updated at least $T_2/2$ times.
Letting $\AA^* = \UU^* \vSigma^* \VV^{*\top}$ be the singular value decomposition of $\AA^*$,
for each such iteration
we have 
\begin{align*}
& \mathrm{Tr}\left[\vPi_{\im(\UU^*)}\WW^{t+1}\vPi_{\im(\UU^*)}\right]\\
& \leq \mathrm{Tr}\left[\vPi_{\im(\UU^*)}\WW^{t}\vPi_{\im(\UU^*)}\right]
- (10\kappa)^{-1}\,,
\end{align*}
and for all other types of iterations we have
$\WW^{t+1}\preceq \WW^t$.
Therefore,
\begin{align*}
& \mathrm{Tr}\left[\vPi_{\im(\UU^*)}\WW^{T}\vPi_{\im(\UU^*)}\right]\\
& \leq \mathrm{Tr}\left[\vPi_{\im(\UU^*)}\WW^{0}\vPi_{\im(\UU^*)}\right]
- \frac{T_2}{2}(10\kappa)^{-1}\,.
\end{align*}
However, note that by the guarantee of Lemma~\ref{lem:rank_step} that $\WW^{T} \succeq \OO$, we have
\[ \mathrm{Tr}\left[\vPi_{\im(\UU^*)}\WW^{T}\vPi_{\im(\UU^*)}\right] \geq 0 \,,\]
and because $\WW^0 = \II$ we also know that 
\[ 
\mathrm{Tr}\left[\vPi_{\im(\UU^*)}\WW^{0}\vPi_{\im(\UU^*)}\right] 
=
\mathrm{Tr}\left[\vPi_{\im(\UU^*)}\right] 
\leq r
\,.\]
This implies that $T_2 \leq 20\kappa r$.

Now, if $T_1 \geq 16r\kappa\log \frac{g^0(\AA^0) - f(\AA^*)}{\epsilon}$, and since $g^t(\AA^t)$ is 
non-increasing for all $t$, we have
\begin{align*}
& g^T(\AA^T) - f(\AA^*)\\
& \leq \left(1 - (16r\kappa)^{-1}\right)^{T_1} (g^0(\AA^0) - f(\AA^*))\\
& \leq \epsilon\,,
\end{align*}
so $T_1 \leq 16r\kappa\log \frac{g^0(\AA^0) - f(\AA^*)}{\epsilon}$.

Similarly, if $T_3 \geq 10r\log \frac{g^0(\AA^0) - f(\AA^*)}{\epsilon}$ we have
\begin{align*}
& g^T(\AA^T) - f(\AA^*)\\
& \leq \left(1 - (10r)^{-1}\right)^{T_4} (g^0(\AA^0) - f(\AA^*))\\
& \leq \epsilon\,,
\end{align*}
so $T_3 \geq 10r\log \frac{g^0(\AA^0) - f(\AA^*)}{\epsilon}$.

Overall, we have that the total number of iterations is
\[ T = \sum T_i \leq 36r(\kappa+1) \log \frac{g^0(\AA^0) - f(\AA^*)}{\epsilon} \,. \]

The only thing left is to ensure that the conditions
\begin{align*}
& \mathrm{Tr}\left[\II - \WW^t\right] \leq r'/2\\
& \mathrm{Tr}\left[\II - \YY^t\right] \leq r'/2
\end{align*}
of Lemma~\ref{lem:rank_step} are satisfied for all $t$. By the guarantees of Lemma~\ref{lem:rank_step}, if one of items $2,3$ holds, then 
\begin{align*}
\mathrm{Tr}\left[\II - \WW^{t+1}\right] \leq 
\mathrm{Tr}\left[\II - \WW^{t}\right] + 1\,,
\end{align*}
and if item $1$ holds, then 
\[ \mathrm{Tr}\left[\II - \WW^{t+1}\right] =
\mathrm{Tr}\left[\II - \WW^{t}\right] \,. \]
As $\mathrm{Tr}\left[\II - \WW^0\right] = 0$, we have
\begin{align*}
& \mathrm{Tr}\left[\II - \WW^{T}\right] \\
& \leq T_2 + T_3 \\
& \leq 20\kappa r + 10 r \log \frac{g^0(\AA^0) - f(\AA^*)}{\epsilon}\\
& \leq r'/2\,,
\end{align*}
where the last inequality holds as long as
\begin{align*}
r' \geq 20r\left(2\kappa + \log \frac{g^0(\AA^0) - f(\AA^*)}{\epsilon}\right)\,.
\end{align*}
\end{proof}

\begin{lemma}[Low rank minimization step analysis]
Let $f:\mathbb{R}^{m\times n}\rightarrow \mathbb{R}$ be a $\beta$-smooth and $\alpha$-strongly convex function with condition
number $\kappa=\beta/\alpha$, 
and $\WW\in\mathbb{R}^{m\times m}, \YY\in\mathbb{R}^{n\times n}$ be symmetric positive semi-definite weight matrices
with spectral norm bounded by $1$ and such that
$\mathrm{Tr}\left[\II-\WW\right] \leq r'/2$ and $ 
\mathrm{Tr}\left[\II-\YY\right] \leq r'/2$ for fixed parameter $r'\geq 256 r$.
We define the regularized function 
\begin{align*}
g(\AA) := f(\AA) + \underbrace{(\beta/4) \left(\langle \WW, \AA\AA^\top\rangle + \langle \YY, \AA^\top \AA\rangle \right)}_{\Phi(\AA)}\,.
\end{align*}

Now, consider a rank-$r'$ matrix $\AA\in\mathbb{R}^{m\times n}$
with singular value decomposition
\begin{align*}
\AA = \UU\vLambda \VV^\top = \sum\limits_{j\in S} \lambda_j \uu_j \vv_j^\top
\end{align*}
and with the property that 
\begin{align*}
\vPi_{\im(\UU)} \cdot \nabla g(\AA) \cdot \vPi_{\im(\VV)} = \OO\,.
\end{align*}

For any rank-$r$ solution $\AA^*$
where $r' \geq 256 r$,
there is a procedure that updates $\AA,\WW,\YY$,
and for which exactly one of the following scenarios holds:
\begin{enumerate}
\item{$\AA$ is updated to a rank-$r'$ matrix $\AA'$, and $\WW,\YY$ are not updated.
We have sufficient progress in the regularized function:
\begin{align*}
g(\AA') \leq g(\AA) - (16\kappa r)^{-1} \left(g(\AA) - f(\AA^*)\right) \,.
\end{align*}
}
\item{Exactly one of $\WW$ or $\YY$ is updated (wlog $\WW$) to a symmetric PSD
$\WW'\preceq \WW$, and $\AA$ is not updated. We have
\begin{align*}
& \mathrm{Tr}\left[\II-\WW'\right] \leq \mathrm{Tr}[\II-\WW] + 1
\end{align*}
and
\begin{align*}
& \mathrm{Tr}\left[\vPi_{\im(\UU^*)}\WW'\vPi_{\im(\UU^*)}\right] \\
& \leq \mathrm{Tr}\left[\vPi_{\im(\UU^*)}\WW\vPi_{\im(\UU^*)}\right] - (10\kappa)^{-1}\,.
\end{align*}
Respectively, for $\YY$:
\begin{align*}
& \mathrm{Tr}\left[\vPi_{\im(\VV^*)}\YY'\vPi_{\im(\VV^*)}\right] \\
& \leq \mathrm{Tr}\left[\vPi_{\im(\VV^*)}\YY\vPi_{\im(\VV^*)}\right] - (10\kappa)^{-1}\,.
\end{align*}
}
\item{Exactly one of $\WW$ or $\YY$ is updated (wlog $\WW$) to a symmetric PSD
$\WW'\preceq \WW$, and $\AA$ is not updated.
We have sufficient progress in the regularized function, where $g'$ is the regularized function with the new weights:
\begin{align*}
g'(\AA) \leq g(\AA) - (10 r)^{-1} \left(g(\AA) - f(\AA^*)\right) \,.
\end{align*}
Additionally,
\begin{align*}
& \mathrm{Tr}\left[\II-\WW'\right] \leq \mathrm{Tr}[\II-\WW] + 1
\end{align*}
}
\end{enumerate}
\label{lem:rank_step}
\end{lemma}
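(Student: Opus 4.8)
The plan is to lift the proof of Lemma~\ref{lem:one_step} to the matrix setting, replacing the entrywise weights $\ww$ by the pair of weight matrices $\WW,\YY$, index sets by singular subspaces, and the hard-thresholding IHT step by the rank-$1$ swap $\oAA = H_{s'-1}(\AA) - \tfrac12 H_1(\eta\nabla g(\AA))$ with $\eta=(2\beta)^{-1}$, which drops the smallest singular component of $\AA$ and adds the top singular component of $\nabla g(\AA)$. Write $\AA^*=\UU^*\vSigma^*\VV^{*\top}$. The procedure the lemma promises is exactly the branching of Algorithm~\ref{alg:regularized_local}: first run the step followed by the fully-corrective re-optimization, and if this makes relative progress $g(\AA')\le g(\AA)-(16\kappa r)^{-1}(g(\AA)-f(\AA^*))$ we are in scenario~1; otherwise I will show the iterate is forced to correlate, in the weighted geometry of $\PP=\WW^{1/2}\AA\AA^\top\WW^{1/2}$ and $\QQ=\YY^{1/2}\AA^\top\AA\YY^{1/2}$, with $\im(\UU^*)$ and $\im(\VV^*)$, and I update whichever of $\WW,\YY$ is responsible, using the truncated-projection update when $\max\{\mathrm{Tr}[H_r(\PP)],\mathrm{Tr}[H_r(\QQ)]\}\ge(0.4/\beta)\Delta$ (scenario~2) and the rank-$1$ update otherwise (scenario~3). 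Exactly one branch triggers by construction.

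For the step analysis I would first note that $g=f+\Phi$ is $2\beta$-smooth — the Hessian of $\Phi$ has spectral norm $\le\beta$ because $\|\WW\|_2,\|\YY\|_2\le1$ — and $\alpha$-strongly convex, and that the fully-corrective hypothesis $\vPi_{\im(\UU)}\nabla g(\AA)\vPi_{\im(\VV)}=\OO$ yields two free facts: $\langle\nabla g(\AA),\AA\rangle=0$ (since $\AA=\vPi_{\im(\UU)}\AA\vPi_{\im(\VV)}$), and removing the bottom rank-$1$ component of $\AA$ costs nothing at first order. Expanding $2\beta$-smoothness along $\oAA-\AA$ then gives $g(\oAA)-g(\AA)\le-\tfrac1{8\beta}\sigma_{\max}(\nabla g(\AA))^2+2\beta\,\sigma_{\min}(\AA)^2$, and by optimality of the ``top gradient / bottom $\AA$'' choices — a rank-$1$ matrix analogue of the IHT-optimality lemma used inside the proof of Lemma~\ref{lem:one_step} — the right-hand side weakens to $-\tfrac1{8\beta}(\bb^\top\nabla g(\AA)\cc)^2+2\beta\lambda_j^2$ for any unit $\bb,\cc$ and any singular direction $j$ of $\AA$. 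Averaging $\lambda_j^2$ over the $\ge r'/2$ singular directions of $\AA$ whose $\WW$-energy is bounded below (these exist because $\mathrm{Tr}[\II-\WW]\le r'/2$) bounds that term by $O(\tfrac{\beta}{r'})\langle\WW,\AA\AA^\top\rangle\le O(\tfrac1{r'})\Phi(\AA)$, and symmetrically on the $\YY$ side; this is the matrix counterpart of (\ref{eq:iht_smoothness})--(\ref{eq:smoothness}).

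Next, assuming scenario~1 fails, the step bound with the top-gradient choice forces the escaping gradient to be small, $\sigma_{\max}(\nabla g(\AA))^2\le O(\tfrac{\alpha}{r})\Delta+O(\tfrac{\beta^2}{r'})\langle\WW,\AA\AA^\top\rangle$, the analogue of (\ref{eq:gradient_small}). I would then apply $\alpha$-strong convexity between $\AA$ and $\AA^*$ (WLOG $f(\AA)\ge f(\AA^*)$, else we are already done), substitute $\nabla f=\nabla g-\tfrac{\beta}{2}(\WW\AA+\AA\YY)$, use $\langle\nabla g(\AA),\AA^*-\AA\rangle=\langle\nabla g(\AA),\AA^*\rangle$ together with the vanishing of the $\im(\UU)\!\otimes\!\im(\VV)$ block of $\nabla g(\AA)$ to express this pairing through the escaping gradient against the rank-$r$ matrix $\AA^*$, and absorb it into the $\tfrac\alpha4\|\AA^*-\AA\|_F^2$ term via $\langle\aa,\bb\rangle+\tfrac\alpha4\|\bb\|^2\ge-\tfrac1\alpha\|\aa\|^2$. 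Rearranging as in (\ref{eq:strconvex})--(\ref{eq:progress}) and using that scenario~1 fails forces a structural dichotomy — a $\WW$/$\YY$-weighted correlation of $\AA$ with $\im(\UU^*),\im(\VV^*)$, quantitatively $\mathrm{Tr}[\vPi_{\im(\UU^*)}\PP\vPi_{\im(\UU^*)}]+\mathrm{Tr}[\vPi_{\im(\VV^*)}\QQ\vPi_{\im(\VV^*)}]\ge\Omega(\kappa^{-1})(\mathrm{Tr}[\PP]+\mathrm{Tr}[\QQ])$, together with a ``the regularization mass relevant to the target is non-negligible'' statement — the matrix analogues of the second bullet of Lemma~\ref{lem:one_step}. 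Feeding this through the $\max\{\mathrm{Tr}[H_r(\PP)],\mathrm{Tr}[H_r(\QQ)]\}$ test splits into the two weight-update cases: in scenario~2, $\WW'=\WW^{1/2}(\II-r^{-1}\vPi)\WW^{1/2}$ with $\vPi$ the projection onto the top-$r$ eigenspace of $\PP$ stays PSD with $\WW'\preceq\WW$ and $\mathrm{Tr}[\II-\WW']\le\mathrm{Tr}[\II-\WW]+1$, and the promised drop of $\mathrm{Tr}[\vPi_{\im(\UU^*)}\WW'\vPi_{\im(\UU^*)}]$ reduces to $\|\vPi\WW^{1/2}\vPi_{\im(\UU^*)}\|_F^2\ge r/(10\kappa)$, which I would extract from the correlation bound together with Lemma~\ref{fact:matrix_ineq} (forcing the $\im(\UU^*)$-mass of $\PP$ onto its top-$r$ eigenspace); in scenario~3 the rank-$1$ update $\WW'=\WW-\WW\AA\AA^\top\WW/\langle\WW,\AA\AA^\top\rangle$ (and its $\YY$-analogue) keeps $\WW'$ PSD with $\WW'\preceq\WW$, $\mathrm{Tr}[\II-\WW']\le\mathrm{Tr}[\II-\WW]+1$, and decreases $\Phi$ by $(\beta/4)(\mathrm{Tr}[\PP^2]/\mathrm{Tr}[\PP]+\mathrm{Tr}[\QQ^2]/\mathrm{Tr}[\QQ])$, which via Cauchy--Schwarz, the structural bound, and $\mathrm{Tr}[\PP]+\mathrm{Tr}[\QQ]=(4/\beta)\Phi(\AA)=O(\Delta/\beta)$ is $\ge(10r)^{-1}\Delta$, giving $g'(\AA)\le g(\AA)-(10r)^{-1}\Delta$.

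The hard part will be operator non-commutativity, which bites in two places. First, in the strong-convexity step the cross-term $\langle\WW\AA+\AA\YY,\AA^*-\AA\rangle$ does not split cleanly along ``inside versus outside $\im(\UU^*)$'' the way $\langle\ww\xx,\xx^*-\xx\rangle$ split along $S^*$ in Lemma~\ref{lem:one_step}; controlling it needs a careful matrix Cauchy--Schwarz, and it is precisely there that the quantity $\mathrm{Tr}[\vPi_{\im(\UU^*)}\WW\vPi_{\im(\UU^*)}]$ appears in place of a plain ``$\WW$-mass on $S^*$''. Second, the weight matrices need not satisfy $\WW^2\succeq\tfrac12\WW$ — the entrywise property of the rounded vector weights that Lemma~\ref{lem:one_step} uses freely — so converting ``a $\kappa^{-1}$-fraction of the weighted mass of $\AA$ lies near $\im(\UU^*)$'' into ``$\mathrm{Tr}[\vPi_{\im(\UU^*)}\WW\vPi_{\im(\UU^*)}]$ drops by a fixed amount'' requires the detour through the top-$r$ eigenspace of $\PP$ and Lemma~\ref{fact:matrix_ineq}. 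These obstructions, together with the fact that the rank-$1$ step only controls one rank-$1$ slice of the escaping gradient at a time so that reaching the rank-$r$ target $\AA^*$ must be amortized over iterations, are what make scenario~3 contribute $O\!\left(r\log\tfrac{f(\OO)-f(\AA^*)}{\eps}\right)$ to the rank budget, i.e.\ the extra $\log$ factor in Theorem~\ref{thm:rank_ARHT_plus}.
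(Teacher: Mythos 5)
Your overall skeleton matches the paper's: the paper also proves this lemma by first establishing a ``cases'' lemma (Lemma~\ref{lem:rank_step_cases}) whose proof is exactly the matrix lift of Lemma~\ref{lem:one_step} that you describe ($2\beta$-smoothness of $g$, the fully-corrective orthogonality $\vPi_{\im(\UU)}\nabla g(\AA)\vPi_{\im(\VV)}=\OO$ killing the first-order cost of dropping the bottom singular component, the bound $\lambda_j^2\le\langle\WW,\AA\AA^\top\rangle/(r'/2)$ from $\mathrm{Tr}[\II-\WW]\le r'/2$, strong convexity against the rank-$r$ decomposition of $\AA^*-\AA$ outside the $\im(\UU)\otimes\im(\VV)$ block, and Lemma~\ref{fact:matrix_ineq} to control the non-commuting cross terms). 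So far so good.

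The genuine gap is in your case analysis: you have attached the two weight updates to the wrong structural conditions, and hence to the wrong scenarios, and with that pairing neither guarantee can be proved. In the paper (and in Algorithm~\ref{alg:regularized_local}), the correlation condition $\langle\vPi_{\im(\UU^*)},\WW\AA\AA^\top\WW\rangle\ge(10\kappa)^{-1}\langle\WW,\AA\AA^\top\rangle$ is paired with the normalized update $\WW'=\WW-\WW\AA\AA^\top\WW/\langle\WW,\AA\AA^\top\rangle$, which yields scenario~2 by the one-line identity $\mathrm{Tr}[\vPi_{\im(\UU^*)}\WW'\vPi_{\im(\UU^*)}]=\mathrm{Tr}[\vPi_{\im(\UU^*)}\WW\vPi_{\im(\UU^*)}]-\langle\vPi_{\im(\UU^*)},\WW\AA\AA^\top\WW\rangle/\langle\WW,\AA\AA^\top\rangle$; while the concentration condition $\mathrm{Tr}[H_r(\PP)]\ge(0.4/\beta)\Delta$ is paired with the projection update $\WW'=\WW^{1/2}(\II-r^{-1}\vPi)\WW^{1/2}$, which yields scenario~3 because the regularizer drops by exactly $\frac{\beta}{4r}\mathrm{Tr}[H_r(\PP)]\ge(10r)^{-1}\Delta$. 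You do the opposite. Your scenario~2 claim $\|\vPi\WW^{1/2}\vPi_{\im(\UU^*)}\|_F^2\ge r/(10\kappa)$ cannot be extracted from your trigger (the $H_r$ test contains no information about $\UU^*$), nor from the correlation bound, since the $\UU^*$-mass of $\WW\AA\AA^\top\WW$ need not lie in the top-$r$ eigenspace of $\PP$. Your scenario~3 claim fails outright: the rank-one update decreases $\Phi$ by $(\beta/4)\,\mathrm{Tr}[\PP^2]/\mathrm{Tr}[\PP]$ (plus the $\QQ$ term), and Cauchy--Schwarz only gives the lower bound $\mathrm{Tr}[\PP]/\mathrm{rank}(\PP)\ge\mathrm{Tr}[\PP]/r'$; since $\mathrm{Tr}[\PP]=O(\Delta/\beta)$ is a bound in the wrong direction (and $\Phi(\AA)$ can be arbitrarily small compared with $f(\AA)-f(\AA^*)$), there is no way to conclude a decrease of $(10r)^{-1}\Delta$ — indeed in the very branch where you invoke it you know $\mathrm{Tr}[H_r(\PP)]<(0.4/\beta)\Delta$, which upper-bounds the decrease but lower-bounds nothing. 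Swapping the two pairings back (correlation $\leftrightarrow$ rank-one update $\leftrightarrow$ scenario~2; concentration $\leftrightarrow$ projection update $\leftrightarrow$ scenario~3) recovers the paper's proof; as written, the proposal does not establish the lemma.
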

\begin{proof}
We attempt to make the update $\AA\rightarrow \AA'$ as defined in Lemma~\ref{lem:rank_step_cases}. If it makes enough progress, i.e.
\begin{align*}
g(\AA') \leq g(\AA) - (16\kappa r)^{-1} \left(g(\AA) - f(\AA^*)\right) \,,
\end{align*}
we are done. Otherwise, one of the items $2$-$5$ in the statement of Lemma~\ref{lem:rank_step_cases} must hold. Let us take them one by one.
\paragraph{Item 2:
\[ \langle \vPi_{\im(\UU^*)}, \WW \AA \AA^\top \WW\rangle \geq \left(10\kappa\right)^{-1} \langle \WW, \AA\AA^\top\rangle \,. \]
}
We update $\WW$ as 
\begin{align*}
\WW' = \WW - c\cdot \WW \AA \AA^\top \WW\,,
\end{align*}
where $c = \langle \WW,\AA\AA^\top\rangle^{-1}$.
Note that this update preserves symmetry, and
\begin{align*}
\OO \preceq \WW' \preceq \WW\,.
\end{align*}
This is because 
\begin{align*}
& c\WW^{1/2} \AA\AA^\top \WW^{1/2} \preceq c \langle \WW, \AA\AA^\top\rangle\cdot\II \preceq \II\,,
\end{align*}
so
\begin{align*}
\WW' = \WW^{1/2}\left(\II - c\WW^{1/2} \AA\AA^\top \WW^{1/2}\right)\WW^{1/2} \succeq \OO
\end{align*}
and
\[ \WW' = \WW - c\WW\AA\AA^\top\WW \preceq \WW\,.\]

Now, note that
\begin{align*}
\mathrm{Tr}\left[\II-\WW'\right]
& = \mathrm{Tr}\left[\II-\WW\right] + c \langle \WW^2, \AA\AA^\top\rangle\\
& \leq \mathrm{Tr}\left[\II-\WW\right] + c \langle \WW, \AA\AA^\top\rangle\\
& = \mathrm{Tr}\left[\II-\WW\right] + 1\,,
\end{align*}
where we used the fact that $\WW^2 \preceq \WW$, and (letting 
$\vPi^* = \vPi_{\im(\UU^*)}$ for convenience),
\begin{equation}
\begin{aligned}
\mathrm{Tr}\left[\vPi^* \WW' \vPi^*\right]
& = \mathrm{Tr}\left[\vPi^*\WW\vPi^*\right] - c \langle \vPi^*, \WW\AA\AA^\top\WW\rangle\\
& \leq \mathrm{Tr}\left[\vPi^* \WW \vPi^*\right] - c/(10\kappa) \langle \WW, \AA\AA^\top\rangle\\
& = \mathrm{Tr}\left[\vPi^* \WW \vPi^*\right] - (10\kappa)^{-1}\,,
\end{aligned}
\label{eq:rank_trace_decrease1}
\end{equation}

\paragraph{Item 3:
\[ \langle \vPi_{\im(\VV^*)}, \YY \AA^\top \AA \YY\rangle \geq \left(10\kappa\right)^{-1} \langle \YY, \AA^\top \AA\rangle \,. \]
}
This is entirely analogous to the previous case.

\paragraph{Item 4:
\begin{align}
(\beta/4)\, \mathrm{Tr}\left[H_r\left(\AA^\top \WW \AA\right)\right] 
\geq 10^{-1} \left(g(\AA) - f(\AA^*)\right) \,. 
\label{eq:rank_nonprogress_case1}
\end{align}
}

After considering the eigendecomposition 
\[ \WW^{1/2} \AA \AA^\top \WW^{1/2} = \sum\limits_{i\in[r']} \olam_i \ovv_i\ovv_i^\top \]
with $\olam_1 \geq \olam_2\geq \dots\geq \olam_{r'}\geq 0$,
(\ref{eq:rank_nonprogress_case1})
 can be re-phrased as
\begin{align*}
(\beta/4)\sum\limits_{i\in [r]} \olam_i > (1/10) \left(g(\AA) - f(\AA^*)\right)\,.
\end{align*}
We update $\WW$ as
\begin{align*}
\WW' = \WW^{1/2} \left(\II - r^{-1} \sum\limits_{i\in[r]} \ovv_i\ovv_i^\top\right)\WW^{1/2}
\end{align*}
and let $g'$ be the new regularized objective.
First of all, note that this operation preserves symmetry, 
and that $\OO \preceq \WW'\preceq \II$, since $\sum\limits_{i\in[r]} \ovv_i\ovv_i^\top \preceq \II$.
Additionally, 
\begin{align*}
\mathrm{Tr}\left[\II-\WW'\right]
& = \mathrm{Tr}\left[\II-\WW\right] + r^{-1} \sum\limits_{i\in [r]} \ovv_i^\top \WW \ovv_i\\
& \leq \mathrm{Tr}\left[\WW\right] + 1
\end{align*}
and
\begin{align*}
& g'(\AA) - g(\AA) \\
& = (\beta/4) \langle \WW', \AA\AA^\top\rangle - 
 (\beta/4)\langle \WW, \AA\AA^\top\rangle\\
& = - (\beta/(4r))\left\langle \WW^{1/2} \left(\sum\limits_{i\in [r]} \ovv_i\ovv_i^\top\right) \WW^{1/2}, \AA \AA^\top\right\rangle\\
& = - (\beta/(4r)) \sum\limits_{i\in[r]} \olam_i\\
& \leq -(10r)^{-1} (g(\AA) - f(\AA^*))\,,
\end{align*}

\paragraph{Item 5:
\[ (\beta/4) \, \mathrm{Tr}\left[H_r\left(\AA \YY \AA^\top\right)\right] \geq 10^{-1}\left(g(\AA) - f(\AA^*)\right) \,.\]
}
This is entirely analogous to the previous case.
\end{proof}

\begin{lemma}
Let $f:\mathbb{R}^{m\times n}\rightarrow \mathbb{R}$ be a $\beta$-smooth and $\alpha$-strongly convex function with condition
number $\kappa=\beta/\alpha$, 
and $\WW\in\mathbb{R}^{m\times m}, \YY\in\mathbb{R}^{n\times n}$ be symmetric positive semi-definite weight matrices
with spectral norm bounded by $1$ and such that
$\mathrm{Tr}\left[\II-\WW\right],
\mathrm{Tr}\left[\II-\YY\right] \leq r'/2$ for some parameter $r'\geq 0$.
We define the regularized function 
\begin{align*}
g(\AA) := f(\AA) + \underbrace{(\beta/4) \left(\langle \WW, \AA\AA^\top\rangle + \langle \YY, \AA^\top \AA\rangle \right)}_{\Phi(\AA)}\,.
\end{align*}

Now, consider a rank-$r'$ matrix $\AA\in\mathbb{R}^{m\times n}$
with singular value decomposition
\begin{align*}
\AA = \UU\vLambda \VV^\top = \sum\limits_{j\in S} \lambda_j \uu_j \vv_j^\top
\end{align*}
and with the property that 
\begin{align*}
\vPi_{\im(\UU)} \cdot \nabla g(\AA) \cdot \vPi_{\im(\VV)} = \OO\,.
\end{align*}
We define an updated solution
\begin{align*}
\AA' = \AA - \eta \cdot H_1(\nabla g(\AA)) - \lambda_j \uu_j \vv_j^\top\,,
\end{align*}
where $\eta=(2\beta)^{-1}$, $H_1(\cdot)$ returns the top singular component,
and $j\in S$ is picked to minimize $\lambda_j$. 

Then, for any rank-$r$ solution $\AA^*$,
where $r' \geq 256 r$,
and its singular value decomposition $\AA^* = \UU^*\vLambda^* \VV^{*\top}$,
at least one of the following conditions holds:
\begin{enumerate}
\item{We have sufficient progress in the regularized function:
\begin{align*}
g(\AA') \leq g(\AA)
 - (16\kappa r)^{-1} \left(g(\AA) - f(\AA^*)\right) \,.
\end{align*}
}
\item{$\WW \AA \AA^\top \WW$ is significantly correlated to $\UU^*$:
\[ \langle \vPi_{\im(\UU^*)}, \WW \AA \AA^\top \WW\rangle \geq \left(10\kappa\right)^{-1} \langle \WW, \AA\AA^\top\rangle \,. \]
}
\item{$\YY \AA^\top \AA \YY$ is significantly correlated to $\VV^*$:
\[ \langle \vPi_{\im(\VV^*)}, \YY \AA^\top \AA \YY\rangle \geq \left(10\kappa\right)^{-1} \langle \YY, \AA^\top\AA\rangle \,. \]
}
\item{The spectrum of $\AA^\top \WW \AA$ is highly concentrated and responsible for a constant fraction of the error:
\[ (\beta/4)\, \mathrm{Tr}\left[H_r\left(\AA^\top \WW \AA\right)\right] \geq 10^{-1} \left(g(\AA) - f(\AA^*)\right) \,. \]
and 
}
\item{The spectrum of $\AA \YY \AA^\top$ is highly concentrated and responsible for a constant fraction of the error:
\[ (\beta/4) \, \mathrm{Tr}\left[H_r\left(\AA \YY \AA^\top\right)\right] \geq 10^{-1}\left(g(\AA) - f(\AA^*)\right) \,.\]
}
\end{enumerate}
\label{lem:rank_step_cases}
\end{lemma}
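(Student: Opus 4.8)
The plan is to follow the blueprint of the proof of Lemma~\ref{lem:one_step}, replacing coordinate restrictions by orthogonal projections onto singular subspaces and the IHT exchange step by a single-component ``local search'' exchange. Assume throughout that item~1 fails, i.e.\ $g(\AA') > g(\AA) - (16\kappa r)^{-1}(g(\AA) - f(\AA^*))$, and deduce that one of items~2--5 holds. First, since $\OO\preceq\WW,\YY\preceq\II$, the regularizer $\Phi$ is $\beta$-smooth in Frobenius norm, hence $g$ is $2\beta$-smooth and $g(\AA')-g(\AA) \le \langle\nabla g(\AA),\AA'-\AA\rangle + \beta\|\AA'-\AA\|_F^2$. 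Writing $\AA'-\AA = -\eta H_1(\nabla g(\AA)) - \lambda_j\uu_j\vv_j^\top$, the stationarity hypothesis $\vPi_{\im(\UU)}\nabla g(\AA)\vPi_{\im(\VV)}=\OO$ annihilates the cross term $\langle\nabla g(\AA),\uu_j\vv_j^\top\rangle$ (because $\uu_j\vv_j^\top$ sits in the $\im(\UU)\times\im(\VV)$ block), and a suitably weighted AM--GM estimate absorbs the remaining cross term between $H_1(\nabla g(\AA))$ and $\uu_j\vv_j^\top$, yielding a bound of the form $g(\AA')-g(\AA) \le -\Omega(\beta^{-1})\sigma_1(\nabla g(\AA))^2 + O(\beta)\lambda_j^2$; since $j$ minimizes $\lambda_j$ we have $\lambda_j^2\le\|\AA\|_F^2/r'$, and comparing with the rank-$(r'-1)$ truncation of $\AA$ one argues this ``drop cost'' is affordable once $r'\ge256r$.

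The analogue of the IHT optimality lemma is that, among all single-component swaps (remove one $\lambda_i\uu_i\vv_i^\top$, add one unit rank-one matrix), the step we take is essentially optimal; this follows from the Ky~Fan / Weyl singular-value inequalities underlying Lemma~\ref{fact:matrix_ineq}. Combining this with the failure of item~1 lets us extract that $\nabla g(\AA)$, compressed to $\im(\UU)+\im(\UU^*)$ on the left and $\im(\VV)+\im(\VV^*)$ on the right, has spectral norm $O\!\big(\sqrt{\alpha r^{-1}(g(\AA)-f(\AA^*))}\big)$. We then invoke $\alpha$-strong convexity of $f$ at the pair $(\AA,\AA^*)$ and substitute $\nabla f(\AA)=\nabla g(\AA)-(\beta/2)(\WW\AA+\AA\YY)$; bounding $\langle\nabla g(\AA),\AA^*-\AA\rangle$ via the restricted-gradient estimate and completing squares on the parts of $\WW\AA$ and $\AA\YY$ acting on $\im(\UU^*)$, resp.\ $\im(\VV^*)$, converts the smoothness bound into a chain of inequalities whose net content is
\[
g(\AA')-g(\AA) \;\le\; -\frac{\Omega(1)}{\kappa r}\Big( g(\AA)-f(\AA^*) \;-\; O(\kappa)\,C_{\mathrm{corr}} \;-\; O(1)\,C_{\mathrm{conc}} \Big),
\]
where $C_{\mathrm{corr}} = \langle\vPi_{\im(\UU^*)},\WW\AA\AA^\top\WW\rangle + \langle\vPi_{\im(\VV^*)},\YY\AA^\top\AA\YY\rangle$ measures correlation of the weighted spectrum of $\AA$ with the optimal subspaces and $C_{\mathrm{conc}} = \mathrm{Tr}[H_r(\AA^\top\WW\AA)] + \mathrm{Tr}[H_r(\AA\YY\AA^\top)]$ measures concentration of its spectrum; Lemma~\ref{fact:matrix_ineq} is precisely what converts the ``mass of $\AA$ outside $\im(\UU^*),\im(\VV^*)$'' produced by the recombination into the $H_r$-trace terms. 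Since item~1 is assumed false, the parenthesis above is nonpositive, so one of the four summands in $C_{\mathrm{corr}},C_{\mathrm{conc}}$ is a fixed fraction (times $\kappa^{-1}$, resp.\ $1$) of $g(\AA)-f(\AA^*)$; normalizing the $C_{\mathrm{corr}}$ summands against $\langle\WW,\AA\AA^\top\rangle$, resp.\ $\langle\YY,\AA^\top\AA\rangle$, this is exactly one of items~2--5.

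The main obstacle is operator non-commutativity. In the sparse setting everything is diagonal, so restrictions to $S^*$ commute with $\ww$ and norms split exactly; here $\vPi_{\im(\UU^*)}$ and $\vPi_{\im(\VV^*)}$ commute neither with $\WW,\YY$ nor with $\AA$'s own singular subspaces, so the clean identities of Lemma~\ref{lem:one_step} (the splitting $\|\xx\|_{\ww,2}^2=\|\xx_{S^*}\|_{\ww,2}^2+\|\xx_{S\backslash S^*}\|_{\ww,2}^2$, the pointwise $\ww^2\ge\ww/2$) must be downgraded to one-sided spectral inequalities, and the ``gradient restricted to $\AA^*$'' is only controllable after symmetrizing through $\WW^{1/2}\AA\AA^\top\WW^{1/2}$-type quantities. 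This non-commutativity is also why a single conclusion in the sparse case fans out into four items here: Lemma~\ref{fact:matrix_ineq} bounds correlations by concentrations but not the reverse, so we can only force ``$C_{\mathrm{corr}}$ large \emph{or} $C_{\mathrm{conc}}$ large'', not both simultaneously, and it is ultimately this loss that forces the extra $\log\frac{f(\OO)-f(\AA^*)}{\eps}$ factor in Theorem~\ref{thm:rank_ARHT_plus}. Bounding the drop cost $\beta\lambda_j^2$ and carrying constants through these non-commuting manipulations is where the bulk of the technical work lies.
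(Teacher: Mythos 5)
Your overall blueprint (smoothness of $g$, strong convexity of $f$, the rank-$r$ analogue of the gradient inequality, and splitting $\AA^*-\AA$ via the projections onto $\im(\UU),\im(\VV)$ and $\im(\UU^*),\im(\VV^*)$) matches the paper, but two steps as sketched would fail. First, the drop cost $O(\beta)\lambda_j^2$: bounding $\lambda_j^2 \le \|\AA\|_F^2/r'$ and ``comparing with the rank-$(r'-1)$ truncation'' does not close the budget, because $\|\AA\|_F^2$ is controlled neither by $g(\AA)-f(\AA^*)$ nor by $\Phi(\AA)$ (the weights $\WW,\YY$ may annihilate exactly the top singular directions of $\AA$, so $\Phi(\AA)\ll\beta\|\AA\|_F^2$ is possible). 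The paper instead proves the \emph{weighted} bound $\lambda_j^2 \le \langle\WW,\AA\AA^\top\rangle/\mathrm{Tr}[\UU^\top\WW\UU] \le 2\langle\WW,\AA\AA^\top\rangle/r'$ (and the same with $\YY$), which is precisely where the hypotheses $\mathrm{Tr}[\II-\WW],\mathrm{Tr}[\II-\YY]\le r'/2$ enter --- hypotheses your sketch never uses --- and absorbs $2\beta\lambda_j^2 \le (32\kappa r)^{-1}\Phi(\AA)$ into a $\Phi(\AA)$ surplus that the strong-convexity computation produces. Your aggregate inequality omits these positive terms $(\beta/2)\langle\WW,\AA\AA^\top\rangle+(\beta/2)\langle\YY,\AA^\top\AA\rangle$ entirely, so there is nothing in your budget to pay for the removed component.

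Second, the final disjunction: from ``$O(\kappa)\,C_{\mathrm{corr}}+O(1)\,C_{\mathrm{conc}}\gtrsim g(\AA)-f(\AA^*)$'' you cannot recover items 2--3 as stated, because those items are \emph{relative} to $\langle\WW,\AA\AA^\top\rangle$, resp.\ $\langle\YY,\AA^\top\AA\rangle$, and $g(\AA)-f(\AA^*)$ need not dominate $\beta\langle\WW,\AA\AA^\top\rangle$ (indeed $\Phi(\AA) > g(\AA)-f(\AA^*)$ whenever $f(\AA) < f(\AA^*)$, which is allowed since $\AA$ has the larger rank). Your ``normalizing the $C_{\mathrm{corr}}$ summands'' step therefore needs an extra comparison that can be false; and the relative form of items 2--3 is exactly what the outer argument (the weight updates in Lemma~\ref{lem:rank_step}) requires. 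The paper sidesteps this by arguing contrapositively: assuming the negations of items 2--5, the correlation penalties appearing in the term $Q=-\langle\nabla\Phi(\AA),\AA^*-\AA\rangle+(\alpha/4)\|\AA^*-\AA\|_F^2$ are bounded by small fractions of the positive $\Phi$-terms inside $Q$ itself (negated item 2 gives $-\beta^2/(2\alpha)\langle\vPi_{\im(\UU^*)},\WW\AA\AA^\top\WW\rangle \ge -(\beta/20)\langle\WW,\AA\AA^\top\rangle$, and negated items 4--5 control the cross terms after von Neumann/H\"older and Lemma~\ref{fact:matrix_ineq}), leaving a $\Phi$ surplus that both survives into item 1 and absorbs the drop cost. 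As a minor point, the ``exchange optimality'' lemma you propose is unnecessary here: the stationarity hypothesis $\vPi_{\im(\UU)}\nabla g(\AA)\vPi_{\im(\VV)}=\OO$ already kills the cross term with $\lambda_j\uu_j\vv_j^\top$, and no optimality over swaps is invoked in the rank case.
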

\begin{proof}
Note that $g$ is a $2\beta$-smooth function. This follows because
\[ \nabla g(\AA) = \nabla f(\AA) + (\beta/2) \left(\WW \AA + \AA \YY\right)\,, \]
and so for any two matrices $\AA,\AA'$,
\begin{align*}
& \left\|\nabla g(\AA') - \nabla g(\AA)\right\|_F \\
& \leq 
\left\|\nabla f(\AA') - \nabla f(\AA)\right\|_F + (\beta/2) \left\|\WW(\AA'-\AA)\right\|_F
+(\beta/2)\left\|(\AA'-\AA)\YY\right\|_F\\
& \leq 
2\beta \left\|\AA'-\AA\right\|_F\,, 
\end{align*}
which is known to imply $2\beta$-smoothness of $g$.
Here we used the triangle inequality and the fact that $\WW,\YY\preceq \II$. Therefore, we have
\begin{equation}
\begin{aligned}
& g(\AA') - g(\AA) \\
& \leq \langle \nabla g(\AA), \AA' - \AA\rangle + 
\left\|\nabla g(\AA')-\nabla g(\AA)\right\|_F \left\|\AA'-\AA\right\|_F \\
& \leq \langle \nabla g(\AA), \AA' - \AA\rangle + \beta \left\|\AA' - \AA\right\|_F^2\\
& \leq -\eta \left\|\nabla g(\AA)\right\|_2^2
+ 2\beta \eta^2\left\|\nabla g(\AA)\right\|_2^2 + 2\beta \lambda_j^2\\
& = -(8\beta)^{-1} \left\|\nabla g(\AA)\right\|_2^2 + 2\beta \lambda_j^2\,,
\end{aligned}
\label{eq:rank_smooth}
\end{equation}
where in the second inequality we used the facts that
\begin{align*}
& \langle \nabla g(\AA), -\lambda_j \uu_j\vv_j^\top\rangle \\
& = \langle \vPi_{\im(\UU)}\nabla g(\AA)\vPi_{\im(\VV)}, -\lambda_j \uu_j\vv_j^\top\rangle \\
& =0
\end{align*}
and that, for any two matrices $\BB,\CC$, 
\[ \left\|\BB+\CC\right\|_F^2 \leq 2 \left\|\BB\right\|+ 2\left\|\CC\right\|_F^2\,. \]
The last equality follows by our choice of $\eta$.
In order to lower bound $\left\|\nabla g(\AA)\right\|_2^2$, we use the strong convexity of $f$ as follows:
\begin{equation}
\begin{aligned}
& f(\AA^*) - f(\AA) \\
& \geq \langle \nabla f(\AA), \AA^* - \AA\rangle + (\alpha/2) \left\|\AA^* - \AA\right\|_F^2\\
& = 
\langle \nabla g(\AA), \AA^* - \AA\rangle 
{\ifdefined\arxiv
\else 
\\ &
\fi}
- \langle\nabla \Phi(\AA), \AA^* - \AA \rangle
+ (\alpha/2) \left\|\AA^* - \AA\right\|_F^2\\
& = 
\underbrace{\langle \nabla g(\AA), \AA^* - \AA\rangle + (\alpha/4) \left\|\AA^* - \AA\right\|_F^2}_{P}
{\ifdefined\arxiv
\else
\\ &
\fi}
- \langle\nabla \Phi(\AA), \AA^* - \AA \rangle + (\alpha/4) \left\|\AA^* - \AA\right\|_F^2\,.
\end{aligned}
\label{eq:rank_strconv}
\end{equation}
\paragraph{Bounding $P$.}

We let $\vPi_{\im(\UU)}$, 
$\vPi_{\im(\VV)}$
be the orthogonal projections onto the images of $\UU$ and $\VV$ respectively, so we can write
\begin{align*}
& \AA^* - \AA\\
& = \vPi_{\im(\UU)} \left(\AA^* - \AA\right)\vPi_{\im(\VV)}
{\ifdefined\arxiv
\else
\\ &
\fi}
+ \left(\II - \vPi_{\im(\UU)}\right) \left(\AA^* - \AA\right)\vPi_{\im(\VV)}
{\ifdefined\arxiv
\else 
\\ &
\fi}
+ \left(\AA^* - \AA\right) \left(\II - \vPi_{\im(\VV)}\right)\\
& = \vPi_{\im(\UU)} \left(\AA^* - \AA\right)\vPi_{\im(\VV)}
{\ifdefined\arxiv
\else
\\ &
\fi}
+ \left(\II - \vPi_{\im(\UU)}\right) \AA^*\vPi_{\im(\VV)}
{\ifdefined\arxiv
\else
\\ &
\fi}
+ \AA^* \left(\II - \vPi_{\im(\VV)}\right)\,.
\end{align*}
Now, note that
\begin{align*}
& \langle \nabla g(\AA), \AA^* - \AA\rangle\\
& = \langle \nabla g(\AA), \left(\II - \vPi_{\im(\UU)}\right) \AA^*\vPi_{\im(\VV)}\rangle 
{\ifdefined\arxiv
\else
\\ &
\fi}
+ \langle \nabla g(\AA), \AA^* \left(\II - \vPi_{\im(\VV)}\right)\rangle\,,
\end{align*}
where we used the fact that 
\begin{align*}
& \langle\nabla g(\AA), \vPi_{\im(\UU)} (\AA^* - \AA) \vPi_{\im(\VV)}\rangle\\
& = \langle\vPi_{\im(\UU)} \nabla g(\AA) \vPi_{\im(\VV)}, \AA^* - \AA\rangle\\
& = 0\,,
\end{align*}
and 
\begin{align*}
& (\alpha/4) \left\|\AA^* - \AA\right\|_F^2\\
& \geq (\alpha/4) \left\|\left(\II - \vPi_{\im(\UU)}\right) \AA^*\vPi_{\im(\VV)}\right\|_F^2
{\ifdefined\arxiv
\else
\\ & 
\fi}
+ (\alpha/4) \left\|\AA^* \left(\II - \vPi_{\im(\VV)}\right)\right\|_F^2\,.
\end{align*}
Additionally, note that for any rank-$r$ matrix $\BB$, we have
\begin{align*}
& \langle \nabla g(\AA), \BB\rangle
+ (\alpha/4) \left\|\BB\right\|_F^2\\
&  \geq -\alpha^{-1} \left\|H_r\left(\nabla g(\AA)\right)\right\|_F^2\\
&  \geq -\alpha^{-1}r \left\|\nabla g(\AA)\right\|_2^2\,,
\end{align*}
a proof of which can be found e.g. in Lemma A.6 of~\cite{axiotis2021local}.
Applying this inequality with 
\[ \BB=\left(\II - \vPi_{\im(\UU)}\right) \AA^*\vPi_{\im(\VV)} \]
and 
\[ \BB=\AA^* \left(\II - \vPi_{\im(\VV)}\right)\] 
and summing them up,
we obtain
\begin{align*}
 P 
& = \langle \nabla g(\AA), \AA^* - \AA\rangle + (\alpha/4) \left\|\AA^* - \AA\right\|_F^2\\
& \geq -2\alpha^{-1}r \left\|\nabla g(\AA)\right\|_2^2\,.
\end{align*}
Plugging this into (\ref{eq:rank_strconv}) and re-arranging, we get
\begin{equation}
\begin{aligned}
& \left\|\nabla g(\AA)\right\|_2^2\\
& \geq \alpha / (2r) \Big( f(\AA) - f(\AA^*) 
{\ifdefined\arxiv
\else
\\ &
\fi}
- \langle\nabla\Phi(\AA), \AA^* - \AA \rangle
+ (\alpha/4) \left\|\AA^* - \AA\right\|_F^2\Big)\\
& = \alpha / (2r) \Big( g(\AA) - f(\AA^*) - \Phi(\AA)
{\ifdefined\arxiv
\else
\\ &
\fi}
\underbrace{- \langle\nabla\Phi(\AA), \AA^* - \AA \rangle
+ (\alpha/4) \left\|\AA^* - \AA\right\|_F^2}_{Q}\Big)\,.
\end{aligned}
\label{eq:rank_gradient}
\end{equation}
\paragraph{Bounding $Q$.}
We know that
\begin{align*}
- \langle\nabla\Phi(\AA), \AA^* - \AA \rangle
= - (\beta/2)\langle \WW \AA + \AA\YY, \AA^* - \AA \rangle\,.
\end{align*}
If we let
\begin{align*}
\AA^* = \UU^* \vLambda^* \VV^{*\top}
\end{align*}
be the SVD of $\AA^*$ and 
$\vPi_{\im(\UU^*)}$,
$\vPi_{\im(\VV^*)}$ be the orthogonal projections onto the images of $\UU^*$ and $\VV^*$ respectively, then we have
\begin{align*}
& - (\beta/2) \langle\WW \AA, \AA^* - \AA \rangle \\
& = - (\beta/2) \langle\WW \AA, \vPi_{\im(\UU^*)}(\AA^* - \AA)\vPi_{\im(\VV^*)}\rangle 
{\ifdefined\arxiv
\else
\\ &
\fi}
+ (\beta/2) \langle\WW, \AA\AA^\top \rangle 
 - (\beta/2) \langle\WW \AA, \vPi_{\im(\UU^*)}\AA\vPi_{\im(\VV^*)} \rangle\,. 
\end{align*}
Looking at the first term of this, we have
\begin{align*}
& - (\beta/2) \langle\WW \AA, \vPi_{\im(\UU^*)}(\AA^* - \AA)\vPi_{\im(\VV^*)}\rangle
{\ifdefined\arxiv
\else
\\ &
\fi}
+ (\alpha/8) \left\|\AA^* - \AA\right\|_F^2\\
& = - (\beta/2) \langle\vPi_{\im(\UU^*)}\WW \AA \vPi_{\im(\VV^*)}, \AA^* - \AA\rangle 
{\ifdefined\arxiv
\else
\\ &
\fi}
+ (\alpha/8) \left\|\AA^* - \AA\right\|_F^2\\
& \geq - \beta^2/(2\alpha) \left\|\vPi_{\im(\UU^*)}\WW \AA\vPi_{\im(\VV^*)}\right\|_F^2\,.
\end{align*}
Similarly for the terms containing $\YY$, we get
\begin{align*}
& - (\beta/2) \langle\AA\YY, \AA^* - \AA \rangle \\
& = - (\beta/2) \langle\AA\YY, \vPi_{\im(\UU^*)}(\AA^* - \AA)\vPi_{\im(\VV^*)}\rangle 
{\ifdefined\arxiv
\else
\\ &
\fi}
+ (\beta/2) \langle\YY, \AA^\top \AA\rangle 
 - (\beta/2) \langle\AA\YY, \vPi_{\im(\UU^*)}\AA\vPi_{\im(\VV^*)} \rangle\,. 
\end{align*}
and
\begin{align*}
& - (\beta/2) \langle\AA\YY, \vPi_{\im(\UU^*)}(\AA^* - \AA)\vPi_{\im(\VV^*)}\rangle
{\ifdefined\arxiv
\else
\\ &
\fi}
+ (\alpha/8) \left\|\AA^* - \AA\right\|_F^2\\
& \geq - \beta^2/(2\alpha) \left\|\vPi_{\im(\UU^*)}\AA\YY\vPi_{\im(\VV^*)}\right\|_F^2\,.
\end{align*}
In summary, we have
\begin{equation}
\begin{aligned}
 Q 
& = -\langle\nabla\Phi(\AA), \AA^* - \AA \rangle + (\alpha/4) \left\|\AA^* - \AA\right\|_F^2\\
& \geq - \beta^2/(2\alpha) \left\|\vPi_{\im(\UU^*)}\WW \AA\vPi_{\im(\VV^*)}\right\|_F^2
{\ifdefined\arxiv
\else
\\ &
\fi}
- \beta^2/(2\alpha) \left\|\vPi_{\im(\UU^*)}\AA\YY\vPi_{\im(\VV^*)}\right\|_F^2
\\ &
+ (\beta/2) \langle\WW, \AA\AA^\top \rangle 
 + (\beta/2) \langle\YY, \AA^\top \AA\rangle
\\ &
- (\beta/2) \langle\WW \AA, \vPi_{\im(\UU^*)}\AA\vPi_{\im(\VV^*)} \rangle
{\ifdefined\arxiv
\else
\\ &
\fi}
- (\beta/2) \langle\AA\YY, \vPi_{\im(\UU^*)}\AA\vPi_{\im(\VV^*)} \rangle\,. 
\end{aligned}
\label{eq:rank_Q}
\end{equation}
Now, let us assume that all items $2$-$5$ from the lemma statement are false.
For the first term of (\ref{eq:rank_Q}), we have
\begin{align*}
& - \beta^2/(2\alpha) \left\|\vPi_{\im(\UU^*)}\WW \AA\vPi_{\im(\VV^*)}\right\|_F^2\\
& \geq - \beta^2/(2\alpha) \left\|\vPi_{\im(\UU^*)}\WW \AA\right\|_F^2\\
& = - \beta^2/(2\alpha) \langle \vPi_{\im(\UU^*)}, \WW \AA\AA^\top \WW\rangle\\
& \geq - (\beta/20) \langle \WW, \AA\AA^\top \rangle\,,
\end{align*}
where we used item $2$ from the lemma statement, and
similarly for the second term of (\ref{eq:rank_Q}),
\begin{align*}
& - \beta^2/(2\alpha) \left\|\vPi_{\im(\UU^*)}\AA\YY\vPi_{\im(\VV^*)}\right\|_F^2\\
& \geq - (\beta/20) \langle \YY, \AA^\top \AA \rangle\,.
\end{align*}
Now we look at the second to last term of (\ref{eq:rank_Q}), i.e.
\begin{align*}
& - (\beta/2) \langle\WW \AA, \vPi_{\im(\UU^*)}\AA\vPi_{\im(\VV^*)} \rangle\\
& = - (\beta/2) \langle\WW \AA\vPi_{\im(\VV^*)} \AA^\top, \vPi_{\im(\UU^*)} \rangle \,.
\end{align*}
Now, we use the matrix Holder inequality 
\begin{align*}
& - (\beta/2) \langle\WW \AA\vPi_{\im(\VV^*)} \AA^\top, \vPi_{\im(\UU^*)} \rangle \\
& \geq - (\beta/2) \left\|\WW \AA\vPi_{\im(\VV^*)} \AA^\top\right\|_*\left\|\vPi_{\im(\UU^*)}\right\|_2 \\
& \geq - (\beta/2) \left\|\WW \AA\vPi_{\im(\VV^*)} \AA^\top\right\|_*\,,
\end{align*}
which can be proved by applying von Neumann's trace 
inequality and then the classical Holder inequality. Now, note that 
the matrix $\WW \AA\vPi_{\im(\VV^*)} \AA^\top$ is similar to
$\WW^{1/2} \AA\vPi_{\im(\VV^*)} \AA^\top \WW^{1/2}$, and so they have the same eigenvalues.
Furthermore, the latter is a symmetric PSD matrix, and so the former has real positive eigenvalues
as well. This means that its singular values are the same as its eigenvalues, and as a result
the nuclear norm is equal to the trace, i.e. 
\begin{align*}
& - (\beta/2) \left\|\WW \AA\vPi_{\im(\VV^*)} \AA^\top\right\|_*\\
& = - (\beta/2) \, \mathrm{Tr}\left(\WW \AA\vPi_{\im(\VV^*)} \AA^\top\right)\\
& = - (\beta/2) \langle \vPi_{\im(\VV^*)}, \AA^\top \WW \AA\rangle\\
& \geq - (\beta/2) \, \mathrm{Tr}\left[H_r\left(\AA^\top \WW \AA\right)\right]\\
& \geq - (1/5) \left(g(\AA) - f(\AA^*)\right)\,.
\end{align*}
where we also used Lemma~\ref{fact:matrix_ineq} and item $4$ from the lemma statement.
So we derived that
\begin{align*}
& - (\beta/2) \langle\WW \AA, \vPi_{\im(\UU^*)}\AA\vPi_{\im(\VV^*)} \rangle\\
& \geq - (1/5) \left(g(\AA) - f(\AA^*)\right)\,,
\end{align*}
and similarly for the last term of (\ref{eq:rank_Q}),
\begin{align*}
& - (\beta/2) \langle\AA\YY, \vPi_{\im(\UU^*)}\AA\vPi_{\im(\VV^*)} \rangle\\
& \geq - (1/5) \left(g(\AA) - f(\AA^*)\right)\,.
\end{align*}
Plugging the four inequalities that we derived back into (\ref{eq:rank_Q}), we get
\begin{align*}
 Q 
& \geq (\beta/2 - \beta/20) \langle\WW, \AA\AA^\top \rangle
{\ifdefined\arxiv
\else
\\ &
\fi}
+(\beta/2 - \beta/20) \langle\YY, \AA^\top\AA \rangle
{\ifdefined\arxiv
\else
\\ &
\fi}
- (2/5) \left(g(\AA) - f(\AA^*)\right)\\
& = (9/5) \Phi(\AA)
 - (2/5) \left(g(\AA) - f(\AA^*)\right)\\
& > (3/2) \Phi(\AA)
 - (2/5) \left(g(\AA) - f(\AA^*)\right)\,.
\end{align*}
Finally, combining this with the smoothness inequality (\ref{eq:rank_smooth}) and the
lower bound on $\left\|\nabla g(\AA)\right\|_2^2$ (\ref{eq:rank_gradient}),
we derive
\begin{align*}
& g(\AA') - g(\AA)\\
& \leq -(16\kappa r)^{-1} \Big(g(\AA) - f(\AA^*) + (1/2) \Phi(\AA) \Big)+ 2\beta \lambda_j^2\\
& = -(16\kappa r)^{-1} \Big(g(\AA) - f(\AA^*)\Big) - (32\kappa r)^{-1} \Phi(\AA)+ 2\beta \lambda_j^2\,.
\end{align*}
What remains is the bound the sum of the last two terms. 
We remind the reader that $\AA = \UU \vLambda \VV^\top$.
Now, letting $\zz$ equal to the vectorized diagonal of $\UU^\top \WW \UU$
and $\vlambda$ to the vectorized diagonal of $\vLambda$,
note that 
\[ \left\|\vlambda\right\|_{\zz}^2 = \langle \vLambda^2, \UU^\top \WW \UU\rangle = \langle \WW, \AA\AA^\top\rangle\,, \]
using which we derive
\begin{align*}
\lambda_j^2 = \min_{j\in S} \lambda_j^2 
& \leq \frac{\left\|\vlambda\right\|_{\zz}^2}{\left\|\zz\right\|_1} \\
& = \frac{\langle \WW, \AA\AA^\top\rangle}{\mathrm{Tr}[\UU^\top \WW \UU]}\\
& = \frac{\langle \WW, \AA\AA^\top\rangle}{\mathrm{Tr}[\UU^\top\UU] - \mathrm{Tr}[\UU^\top(\II-\WW)\UU]}\\
& \leq \frac{\langle \WW, \AA\AA^\top\rangle}{r' - \mathrm{Tr}[\II - \WW]}\\
& \leq \frac{\langle \WW, \AA\AA^\top\rangle}{r'/2} \\
& \leq \frac{\langle \WW, \AA\AA^\top\rangle}{128r\kappa} \,,
\end{align*}
where we used the fact that 
\begin{align*}
& \mathrm{Tr}[\UU^\top (\II-\WW) \UU] \\
& = \mathrm{Tr}\left[(\II-\WW)^{1/2} \UU \UU^\top (\II-\WW)^{1/2} \right] \\
& \leq \mathrm{Tr}[\II-\WW]\,, 
\end{align*}
because the columns of $\UU$ are orthonormal. 
We also used the property that $\mathrm{Tr}[\II-\WW] \leq r' / 2$
and the fact that $r' \geq 256 r \kappa$ by the lemma statement. 

Similarly, we derive that 
\begin{align*}
\lambda_j^2 \leq \frac{\langle \YY, \AA^\top \AA\rangle}{128r\kappa} \,,
\end{align*}
and, adding these two inequalities, we have
\begin{align*}
2\beta \lambda_j^2 \leq (32r\kappa)^{-1} \Phi(\AA)\,,
\end{align*}
finally concluding that 
\begin{align*}
& g(\AA') - g(\AA) \leq -(16\kappa r)^{-1} \Big(g(\AA) - f(\AA^*)\Big)\,.
\end{align*}
\end{proof}

\ifx 0 
\begin{algorithm}%
   \caption{Regularized symmetric low rank minimization}
   \label{alg:rank_ARHT_plus}
\begin{algorithmic}
\STATE $\beta$: smoothness bound, $T$: \#iterations
\STATE $\AA^0$: initial solution of rank $r$
\STATE $\eta = 1/(2\beta)$
\STATE $\WW^0 = \II$
\FOR{$t=0\dots T-1$}
	\STATE Let $\lambda \uu \uu^\top = H_1\left(\eta \nabla f(\AA^t) + 0.5 \WW^t \AA^t\right)$
	\STATE $\AA^{t+1} = \left(\II - 0.5 \WW^t\right) \AA^t - \eta H_1\left(\nabla f(\AA^t)\right)$

	\STATE Let $\MM = H_{s^*}\left((\WW^{t-1})^{1/2} (\AA^{t-1})^2 (\WW^{t-1})^{1/2}\right)$
	\IF{\[\mathrm{Tr}\left[\MM\right] > (1/8)\langle \WW^{t-1}, (\AA^{t-1})^2\rangle\] and 
	\[\langle \WW^{t-1}, (\AA^{t-1})^2\rangle \geq (2\beta)^{-1} (g(\AA^{t-1}) - f(\AA^*))\]}
		\STATE $\WW^t = (\WW^{t-1})^{1/2} \left(\II - (s^*)^{-1} \vPi_{\im(\MM)}\right) (\WW^{t-1})^{1/2}$
		\STATE $\AA^t = \AA^{t-1}$
		\STATE {\bf continue}
	\ENDIF
	\STATE $\oAA = (\II-\WW^{t-1}/2) \AA^{t-1} - \eta \cdot \nabla f(\AA^{t-1})$
	\STATE Let $g(\AA) = f(\AA) + (\beta/2)\left\langle \WW, \AA^2\right\rangle$
	\STATE Let $\vPi = \vPi_{\im(\AA^{t-1})}$
	\STATE $\AA^{\text{new}} = H_{s-s^*}\left(\vPi \oAA \vPi\right) + H_{s^*}\left(\oAA - \vPi\oAA\vPi\right)$
	\IF {$g(\AA^{\text{new}}) \leq 
	g(\vPi \oAA \vPi)$}
		\STATE $\AA^t = \AA^{\text{new}}$
	\ELSE
		\STATE $\AA^t = \vPi \oAA \vPi$
	\ENDIF
	\STATE $\WW^{t} = \WW^{t-1} - c \cdot \WW \AA^2 \WW / \langle \WW, \AA^2\rangle$
\ENDFOR
\end{algorithmic}
\end{algorithm}

\begin{algorithm}%
   \caption{ARHT+ for symmetric low rank minimization}
   \label{alg:rank_ARHT_plus}
\begin{algorithmic}
\STATE $\beta$: smoothness bound, $T$: \#iterations
\STATE $\AA^0$: initial solution of rank $r$
\STATE $\eta = 1/(2\beta)$, $c = s / T$
\STATE $\WW^0 = \II$
\FOR{$t=1\dots T$}
	\STATE Let $\MM = H_{s^*}\left((\WW^{t-1})^{1/2} (\AA^{t-1})^2 (\WW^{t-1})^{1/2}\right)$
	\IF{\[\mathrm{Tr}\left[\MM\right] > (1/8)\langle \WW^{t-1}, (\AA^{t-1})^2\rangle\] and 
	\[\langle \WW^{t-1}, (\AA^{t-1})^2\rangle \geq (2\beta)^{-1} (g(\AA^{t-1}) - f(\AA^*))\]}
		\STATE $\WW^t = (\WW^{t-1})^{1/2} \left(\II - (s^*)^{-1} \vPi_{\im(\MM)}\right) (\WW^{t-1})^{1/2}$
		\STATE $\AA^t = \AA^{t-1}$
		\STATE {\bf continue}
	\ENDIF
	\STATE $\oAA = (\II-\WW^{t-1}/2) \AA^{t-1} - \eta \cdot \nabla f(\AA^{t-1})$
	\STATE Let $g(\AA) = f(\AA) + (\beta/2)\left\langle \WW, \AA^2\right\rangle$
	\STATE Let $\vPi = \vPi_{\im(\AA^{t-1})}$
	\STATE $\AA^{\text{new}} = H_{s-s^*}\left(\vPi \oAA \vPi\right) + H_{s^*}\left(\oAA - \vPi\oAA\vPi\right)$
	\IF {$g(\AA^{\text{new}}) \leq 
	g(\vPi \oAA \vPi)$}
		\STATE $\AA^t = \AA^{\text{new}}$
	\ELSE
		\STATE $\AA^t = \vPi \oAA \vPi$
	\ENDIF
	\STATE $\WW^{t} = \WW^{t-1} - c \cdot \WW \AA^2 \WW / \langle \WW, \AA^2\rangle$
\ENDFOR
\end{algorithmic}
\end{algorithm}
\fi

\section{Lower Bounds}
\label{sec:lower_bounds}

\begin{lemma}[IHT lower bound]
Let $f(\xx) := (1/2) \left\|\AA\xx-\bb\right\|_2^2$. For any $\kappa,s \geq 1$, $s' \leq 0.6 s \kappa^2$,
there exists a (diagonal) matrix $\AA\in\mathbb{R}^{n\times n}$ and a vector $\bb\in\mathbb{R}^n$
where $n = s(\kappa^2+\kappa+1)$,
$f$ is $1$-strongly convex and $\kappa$-smooth,
as well as an
$s$-sparse solution $\xx^*$ and 
an $s'$-sparse solution $\xx$, such that
\[ f(\xx) \geq f(\xx^*) + 0.1 s \kappa^2 \]
but
\[ \xx = H_{s'}\left(\xx - \beta^{-1} \nabla f(\xx)\right)\,, \]
i.e. $\xx$ is a fixpoint for IHT.
\end{lemma}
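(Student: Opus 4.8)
The plan is to build an explicit diagonal instance on three groups of coordinates. Partition $[n]$ into a set $S^*$ of size $s$, a set $T$ of size $s'$, and a padding set $R$ of size $n-s-s' = s\kappa^2 + s\kappa - s' \ge 0$. On $S^*$ put diagonal entry $a_j = 1$ and target $b_j = 0.9\kappa$; on $T$ put $a_i = 1$ and $b_i = 1$; on $R$ put $a_j = \sqrt{\kappa}$ and $b_j = 0$. Since $\nabla^2 f = \AA^\top\AA = \mathrm{diag}(a_j^2)$, the function $f$ is $1$-strongly convex and $\kappa$-smooth; the sole purpose of $R$ is to force the smoothness constant up to $\kappa$, and because $\bb$ vanishes on $R$ it has no effect on the optimization landscape of interest.

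Next, take $\xx^*$ supported on $S^*$ with $x_j^* = b_j/a_j = 0.9\kappa$, and $\xx$ supported on $T$ with $x_i = b_i/a_i = 1$; both have the claimed sparsity. Only the $T$-coordinates contribute error to $f(\xx^*)$, giving $f(\xx^*) = \tfrac12 s'$, and only the $S^*$-coordinates contribute to $f(\xx)$, giving $f(\xx) = \tfrac12 s(0.9\kappa)^2 = 0.405\,s\kappa^2$. Hence $f(\xx) - f(\xx^*) = 0.405\,s\kappa^2 - \tfrac12 s' \ge 0.105\,s\kappa^2 \ge 0.1\,s\kappa^2$, using $s' \le 0.6\,s\kappa^2$. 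The crucial point is that $f(\xx)$ does not depend on $s'$, so the gap persists for every admissible $s'$.

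To check the fixpoint condition, use step size $\beta^{-1} = \kappa^{-1}$ and compute $\vv := \xx - \kappa^{-1}\nabla f(\xx)$, whose $j$-th entry is $x_j - \kappa^{-1}(a_j^2 x_j - a_j b_j)$. On $T$ the gradient vanishes (since $x_i = b_i/a_i$), so $v_i = 1$; on $S^*$ we get $v_j = \kappa^{-1} a_j b_j = 0.9$; and on $R$ we get $v_j = 0$. Thus the $s'$ largest-magnitude coordinates of $\vv$ are exactly those of $T$, each equal to $1$, so $H_{s'}(\vv) = \xx$, i.e. $\xx$ is an IHT fixpoint.

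The delicate step --- and the only place where the numbers could fail --- is reconciling the fixpoint requirement with a gap that survives up to $s' = \Theta(s\kappa^2)$. Being a fixpoint forces $\min_{i\in T}|b_i/a_i| \ge \kappa^{-1}\max_{j\in S^*}|a_j b_j|$, which (with $a \equiv 1$ on $S^*\cup T$) caps the ratio $b_{S^*}/b_T$ at $\kappa$; meanwhile $f(\xx) > f(\xx^*)$ needs $s\,b_{S^*}^2 > s'\,b_T^2$, i.e. that ratio squared must exceed $s'/s$. This closes because $\kappa^2$ comfortably beats $0.6\,s\kappa^2/s = 0.6\kappa^2$, and choosing $b_{S^*} = 0.9\kappa$ rather than the extremal $\kappa$ keeps the fixpoint inequality strict, so the tie-breaking rule of $H_{s'}$ never matters. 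The remaining boundary issues (small $\kappa$, non-integral $n$, tiny $s'$) are handled by adjusting the size and $a$-values of $R$ and are routine.
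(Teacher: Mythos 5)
Your proposal is correct and follows essentially the same construction as the paper: a diagonal least-squares instance where the planted $s$-sparse optimum has targets of magnitude $\Theta(\kappa)$, the stuck $s'$-sparse iterate sits on unit-target coordinates with zero gradient, and the $\kappa$-smoothness is realized by $\sqrt{\kappa}$-scaled padding coordinates, so that the rescaled gradient on the optimal support ($0.9$ in your version, $\sqrt{1-4\delta}$ in the paper's) stays strictly below the current entries and thresholding never moves. The only differences are cosmetic: you use the explicit constant $0.9\kappa$ instead of the paper's $\delta$-perturbed values for strict tie-breaking, and you put the $\sqrt{\kappa}$ entries on a zero-target padding block rather than the paper's middle block $I_2$.
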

\begin{proof}
We use the same example as in~\cite{axiotis2021sparse}, Section 5.2: $\AA$ is diagonal with
\begin{align*}
\AA_{ii} = \begin{cases}
1 & \text{if $i\in I_1$}\\
\sqrt{\kappa} & \text{if $i\in I_2$}\\
1 & \text{if $i\in I_3$}\,,
\end{cases}
\end{align*}
where $I_1=[s],I_2=[s+1,s(\kappa+1)],I_3=[s(\kappa+1)+1,s(\kappa^2+\kappa+1)]$, and $\bb$ is defined as
\begin{align*}
b_i = \begin{cases}
\kappa \sqrt{1-4\delta} & \text{if $i\in I_1$}\\
\sqrt{\kappa}\sqrt{1-2\delta} & \text{if $i\in I_2$}\\
1 & \text{if $i\in I_3$}\,,
\end{cases}
\end{align*}
for some sufficiently small $\delta>0$ used for tie-breaking. We define
\begin{align*}
x_i^* = \begin{cases}
\kappa\sqrt{1-4\delta} & \text{if $i\in I_1$}\\
0 & \text{otherwise}
\end{cases}
\end{align*}
and, for some arbitrary $s'$-sized $S\subseteq I_3$
\begin{align*}
x_i = \begin{cases}
0 & \text{if $i\in I_1\cup I_2\cup I_3\backslash S$}\\
1 & \text{otherwise}\,.
\end{cases}
\end{align*}
Note that $f(\xx) - f(\xx^*) = 0.5 s \kappa^2 (1-4\delta) - 0.5 s' \geq 0.1 s\kappa^2$. Furthermore, the gradient is equal to
\begin{align*}
 \nabla f(\xx) 
& = \AA^\top\left(\AA\xx - \bb\right)\\
& = \begin{cases}
-\kappa\sqrt{1-4\delta} & \text{if $i\in I_1$}\\
-\kappa\sqrt{1-2\delta} & \text{if $i\in I_2$}\\
-1 & \text{if $i\in I_3\backslash S$}\\
0 & \text{if $i\in S$}\,,
\end{cases}
\end{align*}
and since we have $\beta = \kappa$,
\begin{align*}
\xx - \beta^{-1} \nabla f(\xx)
=\begin{cases}
\sqrt{1-4\delta} & \text{if $i\in I_1$}\\
\sqrt{1-2\delta} & \text{if $i\in I_2$}\\
1/\kappa & \text{if $i\in I_3\backslash S$}\\
1 & \text{if $i\in S$}\,,
\end{cases}
\end{align*}
implying that $H_{s'}\left(\xx - \beta^{-1} \nabla f(\xx)\right) = \xx$.
\end{proof}

\end{document}